\newcommand{\Href}[2]{\hyperref[#2]{#1~\ref{#2}}}
\numberwithin{equation}{section}
\newtheorem{thm}{Theorem}[section]
\newtheorem{prp}{Proposition}[section]
\newtheorem{lemma}{Lemma}[section]
\newtheorem{cor}{Corollary}[section]
\newtheorem{claim}{Claim}[section]
\newtheorem{rem}{Remark}[section]
\theoremstyle{definition}
\newtheorem{dfn}{Definition}[section]
\newtheorem{exl}[dfn]{Example}
\DeclareMathOperator{\dom}{dom}
\DeclareMathOperator{\supp}{supp}
\DeclareMathOperator{\pos}{\mathrm{pos}}
\DeclareMathOperator{\diag}{\mathrm{diag}}
\newcommand{\Red}{\Re^d}
\newcommand{\ball}[1]{\mathbf{B}^{#1}}
\newcommand{\norm}[1]{\left\|#1\right\|}
\newcommand{\enorm}[1]{\left|#1\right|}
\newcommand{\bd}[1]{\mathrm{bd}\left(#1\right)}
\newcommand{\iprod}[2]{\left\langle#1,#2\right\rangle}
\newcommand{\tr}[1]{\mathrm{trace}\left(#1\right)}
\def\N{{\mathbb N}}
\def\R{{\mathbb R}}
\renewcommand{\Re}{\mathbb{R}}
\def\ellips{\mathcal{E}}%
\def\ellipsd{\ellips^{d}}%
\newcommand{\st}{:}
\def\phi{\varphi}
\def\epsilon{\varepsilon}
\def\alpha{\upalpha}
\newcommand{\vol}[1]{\operatorname{vol}\nolimits_{#1}}%
\newcommand{\di}{\,\mathrm{d}}
\newcommand{\upthing}[1]{\overline{#1}}%
\newcommand{\sthing}[2][s]{\prescript{(#1)}{}{#2}}%
\newcommand{\selldense}[2][s]{\sthing[#1]{J}_{#2}}%
\newcommand{\tpdfs}{\texorpdfstring{$s$}{s}}%
\newcommand{\tpdfpsi}{\texorpdfstring{$\psi$}{psi}}%
\newcommand{\soelldense}[2][s]{\sthing[#1]{L}_{#2}}%
\newcommand{\psidense}[2][\psi]{{L_{#2}[#1]}}%
\newcommand{\psiell}[1][\upthing{E}]
{{\ell_{\psi, #1}}}%
\newcommand{\sell}[2][s]
{{\sthing[#1]\ell_{#2}}}%
\newcommand{\shf}[2][s]
{{\sthing[#1]h_{#2}}}%
\newcommand{\psiellf}[2][\upthing{E}]
{\psiell[#1]\!\left({#2}\right)}%
\newcommand{\funclass}[2][d]{\ellips^{#1}\! \left[ #2\right]}
\newcommand{\noshow}[1]{}
\newcommand{\irat}{\operatorname{I.rat}}%
\newcommand{\sintrat}[2][s]{{
\sthing[#1]{\irat}}{#2}}%
\newcommand{\oirat}{\operatorname{I.or}}%
\newcommand{\sointrat}[2][s]{{
\sthing[#1]{\oirat}}{#2}}%
\newcommand{\ovolbs}[1][s]{\sthing[#1]{\lambda}_{d}}%
\newcommand{\psiovolb}[1][\psi]{{\lambda_{d}[#1]}}%
\newcommand{\id}{\mathrm{Id}}
\def\legendre{\mathcal{L}}%
\newcommand{\loglego}[1]{{#1}^{\circ}}
\def\loginfcop{\star}%
\newcommand{\loginfconv}[2]{#1 \loginfcop #2}%
\newcommand{\marrow}{\marginpar{$\longleftarrow$}}
\newcommand{\grisha}[1]{\textcolor{cyan}{\marrow #1}}
\title{Functional L\"owner Ellipsoids}
\author{Grigory Ivanov\address{Grigory Ivanov: 
Institute of Science and Technology Austria (IST Austria), 
Kleusteneuburg, Austria.  Moscow Inst. of Physics and Technology, 
Moscow, Russia}
\email{grimivanov@gmail.com}
\and
Igor Tsiutsiurupa
\address{Igor Tsiutsiurupa:  Moscow Inst. of Physics and Technology, 
Moscow, Russia}
\email{igor.tsutsurupa97@gmail.com}
}
\subjclass[2020]{Primary 52A23; Secondary 52A40, 46T12}
\keywords{L\"owner's ellipsoid, John's ellipsoid, logarithmically concave function, outer volume ratio}
\begin{document}
\begin{abstract}
	We extend the notion of the smallest volume ellipsoid containing a convex body in~$\mathbb{R}^{d}$
	to the setting of logarithmically concave functions. 
	We consider a vast class of logarithmically concave functions whose superlevel sets are concentric ellipsoids.
	For a fixed function from this class, we consider the set of all its ``affine'' positions.
	For any log-concave function $f$ on $\mathbb{R}^{d},$ 
	we consider functions belonging to this set of ``affine'' positions,
	and find the one with the smallest integral under the condition that it is pointwise greater than or equal to $f.$ 
	We study the properties of existence and uniqueness of the solution to this problem. 
	For any 
$s \in [0,\infty),$
	 we consider the construction dual to the recently defined John $s$-function~\cite{ivanov2020functional}.
	We prove that such a construction determines a unique function and call it the \emph{L\"owner $s$-function} of $f.$ 
	We study the L\"owner $s$-functions as $s$ tends to zero and to infinity.
	Finally, extending the notion of the outer volume ratio,
	we define the outer integral ratio of a log-concave function and give an asymptotically tight bound on it.
\end{abstract}
\maketitle

\section{Introduction}
In \cite{John}, Fritz John proved that each convex body $K$ in {$\R^d$} contains a unique ellipsoid of maximal volume, now called the \emph{John ellipsoid} of $K$. {A closely related object is the (unique) ellipsoid of minimal volume containing $K$, called the \emph{L\"owner ellipsoid} of $K$. The connection is via polar duality: If $K$ is a convex body in $\R^d$ such that the origin is the center of its John ellipsoid then the polar of the John ellipsoid of $K$ is the L\"owner ellipsoid of the polar set of $K.$
The John and L{\"o}wner ellipsoids  are  the cornerstones of the modern convex analysis.
These objects appear in different areas of mathematics, computational mathematics and physics
(see~\cite{ball2001convex, henk2012lowner, aubrun2017alice}).
An enormous number of problems has been solved using properties of these two deeply connected objects.

On the other hand,
the idea of extending the results of convex analysis to the more general setting of log-concave functions
has been investigated for the last few decades
(we refer to~\cite{brazitikos2014geometry, aubrun2017alice, artstein2015asymptotic}).
To the best of our knowledge,
D.~Alonso-Guti{\'e}rrez, B.G.~Merino, C.H.~Jim{\'e}nez,
  and R.~Villa~\cite{alonso2018john} were the first
who extended the notion of the John ellipsoid to the setting of integrable log-concave functions, and 
the first extension of the notion of L\"owner ellipsoid
 was introduced by  B.~Li, C.~Sh\"utt, and E.~Werner in~\cite{li2019loewner}. Let us explain their approaches.


Consider the class of scaled characteristic functions of ellipsoids in $\R^d,$ that is, the functions of the form $\alpha \chi_E,$
where $\alpha > 0$ and $\chi_E$ is the characteristic function of an ellipsoid $E$ in $\R^d.$ 
The authors of~\cite{alonso2018john} showed that, under a mild condition on a log-concave function $f \colon \R^d \to [0, \infty)$, there is a unique function, which we refer to as the \emph{John function of} $f$ \emph{in the sense of}~\cite{alonso2018john}, of this class of maximal integral that is pointwise less than or equal to $f.$ 

The authors of~\cite{li2019loewner} study the dual problem:
They consider the class of functions  of the form 
$ \alpha e^{-\enorm{\mathcal{A}x}}$, where $\alpha > 0$ and
 $\mathcal{A}$ is a nonsingular
affine transformation, and show that there is a unique function 
of minimal integral that is pointwise greater than or equal to a given upper semi-continuous log-concave function $f \colon \R^d \to [0, \infty)$ of finite positive integral. We call this unique function of minimal integral the \emph{L\"owner function of $f$ in the sense of}~\cite{li2019loewner}.

As in the setting of convex sets, the connection between these two optimization problems is via polar duality (see the definitions in Subsection~\ref{subsec:duality_def}). 
The \emph{polar} (or \emph{log-conjugate}) function of the characteristic function of the Euclidean unit ball is $e^{-\enorm{x}}.$ Therefore, the classes of functions considered in \cite{alonso2018john} and \cite{li2019loewner} consist of translates of functions
that are polar to each other.


Recently, a more general approach to the definition of John function 
has been considered in \cite{ivanov2020functional},
where the \emph{John $s$-functions} of an integrable log-concave function
are constructed for all $s \in (0, \infty).$
We define and discuss these functions in Subsection~\ref{subsec:john_s-func}.
It was shown~\cite[Theorem~7.3]{ivanov2020functional}
that the John function in the sense of~\cite{alonso2018john}
is the limit (in a rather strong sense)
of the John $s$-function as $s \to 0.$
Apart of this limit result, 
a characterization of the John $s$-function
similar to the one given by John in his fundamental theorem
is given in~\cite[Theorem~5.1]{ivanov2020functional}. 

Combining ideas of~\cite{li2019loewner} and~\cite{ivanov2020functional},
we consider the ``dual'' problem and define the L\"owner $s$-function below. 
In hindsight, we understood that it is easier to see the main ideas of our proofs and geometric construction
considering a more abstract approach than the one in~\cite{ivanov2020functional}.

%
We formalize our approach in a functional way. 
The classical L\"owner ellipsoid can be introduced as follows. We consider the class of all nonsingular
affine images (we may call them \emph{positions}) of the unit ball $\ball{d}.$ The L\"owner ellipsoid of a convex body $K$ is the (unique) largest volume element of this class containing $K$.

For a function $\psi\colon [0, \infty) \to \R \cup \{+\infty\},$ 
we consider the following class of functions
\[
	\funclass{\psi} = 
		\left\{
		\alpha e^{- \psi(\enorm{A(x -a)})} \st A  \in \operatorname{GL}(d), \ \alpha > 0, \; a \in \R^d 
		\right\}.
\]
One may consider the functional class 
$\funclass{\psi}$ as the class of ``affine'' positions  of the function $e^{- \psi(\enorm{x})},$ $x \in \R^d.$ 
Now we can say that the classes of ``affine'' positions of the characteristic function of the unit ball  and the function
$ x \mapsto e^{-\enorm{x}},$ $x  \in \R^d,$ were considered in~\cite{alonso2018john} and~\cite{li2019loewner}, respectively.

We require for functions of $\funclass{\psi}$ to be  reasonably good log-concave functions.
We say that $\psi\colon [0, \infty) \to \R \cup \{+\infty\}$ is an \emph{admissible  function}
if the function $t \mapsto e^{-\psi(\enorm{t})},$ $t \in \R,$ 
is an upper semi-continuous log-concave  function of finite positive integral.
We discuss the properties of admissible functions in Subsection~\ref{subsec:adm_convex_func} and consider only classes 
$\funclass{\psi}$ with $\psi$ being admissible.

We will say that a function $f_1: \R^d \to \R$  is \emph{below} a function 
$f_2: \R^d \to \R$, {denoted} 
$f_1\leq f_2$,
 if $f_1$ is pointwise less than or equal to $f_2$, that is, 
 $f_1 (x) \leq f_2 (x)$ for all $x \in \R^d.$

For an admissible function $\psi \colon [0, \infty) \to \R \cup \{+\infty\}$ and an upper semi-continuous log-concave  function $f\colon \R^d \to [0, \infty)$ of finite positive integral,
we study the following optimization problem:
\begin{equation}
\label{eq:psi_problem}
	\min\limits_{\ell \in \funclass{\psi} }
		\int_{\R^d} \ell 
		\quad \text{subject to} \quad
		f \leq \ell.
\end{equation}

\subsection{The main results}
\label{subsec:main_results}
First, we study the uniqueness of the solution to~\eqref{eq:psi_problem}.
\begin{thm}\label{thm:psi_lowner}
	Let  $\psi\colon [0, \infty) \to \R \cup \{+\infty\}$
	be an admissible function
	and let $\psi$ satisfy one of the following conditions:
	\begin{enumerate}
		\item $\psi$ is strictly increasing and takes only finite values;
		\item the effective domain of $\psi$ is bounded.
	\end{enumerate}
	Further, let $f \colon \R^d \to [0, \infty)$ be  an upper semi-continuous log-concave  function  of finite positive integral.
	If there exists $\ell \in \funclass{\psi}$ such that $f \leq \ell,$ 
	then there exists a unique solution to problem~\eqref{eq:psi_problem}.
	Moreover, the supremum of a solution to this problem  is at most  $e^d$ the supremum of~$f.$ 
\end{thm}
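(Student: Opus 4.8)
The strategy is to reduce the functional problem to a comparison of convex sets via superlevel sets and to use compactness plus strict convexity arguments. First I would set up the correspondence: for a function $\ell = \alpha e^{-\psi(\enorm{A(x-a)})} \in \funclass{\psi}$, the condition $f \leq \ell$ is equivalent to a family of inclusions of superlevel sets $\{f \geq t\} \subseteq \{\ell \geq t\}$ for all $t > 0$, and each superlevel set of $\ell$ is a (possibly empty, possibly all of $\R^d$) dilate of the ellipsoid $E = a + A^{-1}\ball{d}$, centered at $a$. The integral $\int_{\R^d}\ell$ factors as $\alpha \cdot |\det A^{-1}| \cdot \int_{\R^d} e^{-\psi(\enorm{x})}\di x$, so minimizing the integral amounts to minimizing $\alpha/|\det A|$ over feasible triples $(A,\alpha,a)$. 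The two hypotheses on $\psi$ — strictly increasing with finite values, or bounded effective domain — are exactly what is needed to pin down the shape of the superlevel sets and to control the feasible region: in case (1) every superlevel set below $\alpha$ is a genuine ellipsoid with nonempty interior, and in case (2) $\ell$ has bounded support, which forces $\supp f$ to be bounded.

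The next step is \textbf{existence}: I would show the feasible set of parameters, after a suitable normalization, is nonempty and that the infimum of $\alpha/|\det A|$ is attained. Feasibility is assumed. For compactness, one normalizes so that, say, the smallest superlevel set constraint is tight; the key point is that the constraint $f \leq \ell$ together with $\int f > 0$ bounds $\alpha$ from below (so $\ell$ cannot degenerate to $0$) and bounds $|\det A|$ from above, while the constraint that $\ell$ not have infinite integral bounds things from the other side. A minimizing sequence $(A_n,\alpha_n,a_n)$ then has a convergent subsequence after controlling the centers $a_n$ (they stay in a bounded region because the ellipsoids must cover the set $\{f \geq t_0\}$ for some fixed $t_0$ where this set has positive measure), and lower semicontinuity of the integral under the pointwise-limit together with upper semicontinuity of $f$ gives feasibility of the limit. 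Here case (2) needs a separate but easier argument since the ellipsoids can be arbitrarily ``flat'' only in directions where $\supp f$ is thin, and one handles the degeneration directly.

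For \textbf{uniqueness} I would argue by contradiction: suppose $\ell_1 = \alpha_1 e^{-\psi(\enorm{A_1(x-a_1)})}$ and $\ell_2 = \alpha_2 e^{-\psi(\enorm{A_2(x-a_2)})}$ are two distinct minimizers. The natural move is to take a ``geometric average'' $\ell$ of $\ell_1$ and $\ell_2$ — built from the ellipsoid whose defining positive-definite matrix is an appropriate mean of those of $\ell_1,\ell_2$ and whose center is the midpoint, with $\alpha$ the geometric mean $\sqrt{\alpha_1\alpha_2}$ — show that $\ell \in \funclass{\psi}$, that $\ell$ is still feasible ($f \leq \ell_1$ and $f \leq \ell_2$ imply $f \leq \ell$ by a pointwise log-concavity/AM–GM computation on the exponents, using monotonicity of $\psi$), and that $\int \ell \leq \sqrt{\int\ell_1 \cdot \int\ell_2}$ with equality only when $\ell_1 = \ell_2$. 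The strict inequality comes from strict concavity of $\log\det$ on positive-definite matrices (the Minkowski determinant inequality), which forces $|\det A|^{-1}$ to be strictly smaller for the average unless $A_1$ and $A_2$ define the same ellipsoid; then one also checks the centers must coincide. This contradicts minimality. \emph{I expect this determinant strict-convexity step, together with carefully verifying that the averaged function genuinely lies in $\funclass{\psi}$ and stays feasible, to be the main obstacle}, since one must track how the radial profile $\psi$ interacts with the affine averaging — this is where admissibility and the monotonicity/bounded-domain hypotheses are really used.

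Finally, the \textbf{supremum bound}: $\sup \ell = \alpha$ (attained at $x = a$ since $\psi \geq \psi(0)$ after normalizing, or more precisely $\sup e^{-\psi(\enorm{\cdot})} = e^{-\psi(0)}$ up to the multiplicative constant absorbed into the class). One shows $\sup f \geq e^{-d}\sup\ell$ by a volumetric argument: if $\sup f$ were too small compared with $\sup\ell = \alpha$, one could shrink $\alpha$ and simultaneously enlarge the ellipsoid (decrease $|\det A|$) while keeping $f \leq \ell$, contradicting minimality. Concretely, comparing $\int f \leq \int \ell$ is too weak; instead one uses that at the optimum the ``contact'' between $f$ and $\ell$ must be spread out — if $f(x) < \ell(x)$ on too large a superlevel set, the corresponding ellipsoidal constraint is slack and can be tightened. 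The factor $e^d$ is the same one that appears for the characteristic function of a simplex relative to its L\"owner ball, and I would extract it by reducing to the classical John/L\"owner position estimate applied to one well-chosen superlevel set of $\ell$, namely the one on which the optimality forces $d$ (or $d+1$) affinely independent contact points with $f$.
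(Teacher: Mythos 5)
Your overall architecture — parametrize by $(A,\alpha,a)$, reduce the objective to $\alpha/\det A$, get existence by compactness, and attack uniqueness by a geometric mean plus Minkowski's determinant inequality — matches the paper's strategy up to and including the conclusion that $A_1=A_2$ for any two minimizers. But your uniqueness argument then collapses exactly where the theorem's hypotheses have to do work, and your height-bound argument is not what the paper does and has a real gap.

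\textbf{Uniqueness: the translation case is the whole point, not an afterthought.} After the Minkowski step you write ``then one also checks the centers must coincide,'' but nothing in the AM--GM/interpolation argument forces this. If $A_1=A_2$ and $\alpha_1=\alpha_2$ but $a_1\neq a_2$, the interpolant has the \emph{same} integral, so you get no contradiction with minimality. This is precisely where hypotheses (1) or (2) on $\psi$ enter, and the paper needs two genuinely different geometric constructions: for strictly increasing $\psi$ it builds a new $\psi$-ellipsoidal function by shrinking the matrix by a factor $1-\varepsilon$ and the height by $e^{-2d\varepsilon}$, showing feasibility over the half-space where one of $\ell_1,\ell_2$ dominates and exploiting strict monotonicity to make $\varepsilon>0$ work; for bounded $\dom\psi$ it instead dilates the matrix along the direction $a_1-a_2$ by $\tau/(\tau-\delta)$, using boundedness of the support. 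Moreover, the paper exhibits in its ``Chimeras'' subsection an admissible $\psi$ (finite-valued, constant on an initial segment, so satisfying neither hypothesis) for which translation non-uniqueness actually occurs — so the missing step cannot be filled in by a generic compactness or convexity argument and must use (1) or (2) in a substantive way.

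\textbf{Height bound: different approach, and your sketch is not sound as stated.} You invoke a contact-point/John-position argument on ``one well-chosen superlevel set'' and ``$d$ or $d+1$ affinely independent contact points,'' but nothing in the optimality conditions for this problem yields a finite contact configuration on a single level set, and an upper semi-continuous log-concave $f$ may touch $\ell$ on a thin or even zero-measure set; the claim that slack on a large superlevel set lets you tighten $\alpha$ while decreasing $\det A^{-1}$ is not established. The paper instead takes a purely analytic route: for each height $\alpha>\norm{f}$ it defines $\Psi(\alpha)$ as the determinant of the optimal $A$ at that height, uses the interpolation lemma plus Minkowski's inequality to show $\Phi(t)=\Psi(e^t)^{1/d}$ is concave, compares $\Phi$ with the convex function $\Phi(t_0)e^{(t-t_0)/d}$ which meets it at the optimal $t_0$, and passes to the limit $t\to\log\norm{f}$ to extract $t_0\leq d+\log\norm{f}$. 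That argument makes no reference to contact points and is the one you would need to reproduce (or replace with a rigorous alternative) to get the factor $e^d$.

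On the existence step your compactness outline is in the right spirit (the paper proves a quantitative version bounding $\enorm{a}$, $\alpha$, and the eigenvalues of $A$ above and below); one small correction is that the integral bound forces $\det A$ \emph{below} by a positive constant, not ``above,'' and the height $\alpha$ is bounded above by a separate volumetric estimate on a small ball around the maximizer of $f$.
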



The question of existence of $\ell \in \funclass{\psi}$ 
satisfying relation $f \leq \ell$ is not hard. 
As shown in Lemma~\ref{lem:psi_lin_growth_bound},
it suffices for $\psi$ to be of linear growth at infinity.

Next, we study the dual problem to that  considered in \cite{ivanov2020functional}.
For any $s \in [0, \infty) \cup \{\infty\}$, we define $\psi_s \colon [0, \infty) \to [0, \infty)$ by 
\begin{equation}\label{eq:ellipsoid-function_correspondence}
		\psi_s (t)=
			\begin{cases}
				t, 	&\quad s = 0\\
				\cfrac{s}{2}
					\left[ 
						\sqrt{1+4\left(\cfrac{t}{s}\right)^2} -
						\ln \!\left(\cfrac{1+\sqrt{1+4\left(\frac{t}{s}\right)^2}}{2}\right)  - 1
					\right],
				&\quad s \in (0, \infty) \\
				t^2, &\quad s = \infty
			\end{cases}.
	\end{equation}
	As it will be shown in Subsection~\ref{subsec:john_s-func},
the function $\psi_s$ is an admissible function for an arbitrary $s \in [0, \infty) \cup \{\infty\}.$
One sees that $\funclass{\psi_s}$ with $s = 0$ coincides with the class of functions considered in \cite{li2019loewner}, and that the L\"owner function in the sense of~\cite{li2019loewner}
is  a solution to problem \eqref{eq:psi_problem} with $\psi = \psi_0.$  The class $\funclass{\psi_{\infty}}$  consists of Gaussian densities. The cumbersome definition of $\psi_s$ in the case 
$s \in (0, \infty)$ is caused by polar duality.
 In Subsection~\ref{subsec:john_s-func}, we will show that the problem \eqref{eq:psi_problem} with $\psi = \psi_s$ and $s \in  (0, \infty)$ is dual to the problem of finding the John $s$-function considered in \cite{ivanov2020functional}. We study  problem \eqref{eq:psi_problem} with $\psi= \psi_s$ in the last four sections of the paper.

As a simple consequence of Theorem~\ref{thm:psi_lowner}, we obtain the following result.
\begin{thm}\label{thm:s_lowner}
	Fix $s \in [0, \infty)$ and let $f\colon\Red\to[0,\infty)$
	be an upper semi-continuous log-concave  function of finite positive integral.
	Then there exists a unique solution to problem
	\begin{equation}\label{eq:s_problem}
		\min\limits_{\ell \in \funclass{\psi_s} }
			\int_{\R^d} \ell
			\quad \text{subject to} \quad
			f \leq \ell.
	\end{equation}
\end{thm}
We will refer to the solution to problem~\eqref{eq:s_problem} for a fixed $s \in [0, \infty)$ 
as the \emph{L\"owner $s$-function} of~$f$,
and denote it by~$\soelldense{f}.$
Note that the L\"owner function in the sense of~\cite{li2019loewner}
is precisely~$\soelldense[0]{f}.$

As in the case of John $s$-functions,
we get the following limit result as $s$ tends to zero.
\begin{thm}\label{thm:zero_limit}
	Let $f\colon \R^d \to [0, \infty)$ be a an upper semi-continuous log-concave  function of finite positive integral.
	Then the L\"owner $s$-functions of $f$ converge uniformly {on $\R^d$} to the L\"owner function of $f$ in the sense of \cite{li2019loewner}  as 
	$s \to +0 .$ 
\end{thm}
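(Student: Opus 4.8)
The plan is to prove Theorem~\ref{thm:zero_limit} by establishing two facts: (a) a uniform bound on the relevant family of functions that forces a compactness/normal-families argument to work, and (b) the identification of any subsequential limit of $\soelldense{f}$ as $s \to +0$ with $\soelldense[0]{f}$, using the uniqueness from Theorem~\ref{thm:s_lowner} (the $s=0$ case). Since $\soelldense{f} = \alpha_s e^{-\psi_s(\enorm{A_s(x-a_s)})}$ and $\soelldense[0]{f} = \alpha_0 e^{-\psi_0(\enorm{A_0(x-a_0)})}$, it is natural to recast everything in terms of the parameters $(\alpha_s, A_s, a_s)$ and to show these converge to $(\alpha_0, A_0, a_0)$; uniform convergence of the functions then follows from the explicit form of $\psi_s$ together with the fact (to be checked, but elementary from \eqref{eq:ellipsoid-function_correspondence}) that $\psi_s \to \psi_0$ uniformly on $[0,\infty)$, and in fact $\psi_s \le \psi_0$ pointwise with $\psi_0 - \psi_s \to 0$ uniformly. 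The overall integral $\int \soelldense{f}$ is, for fixed $s$, at least $\int f$ and, using $\soelldense[0]{f}$ as a feasible comparison (it satisfies $f \le \soelldense[0]{f}$, and by the $\psi_s \le \psi_0$ inequality the ``$\psi_s$-position'' of $\soelldense[0]{f}$'s underlying ellipsoid dominates $\soelldense[0]{f}$ hence $f$), one gets a two-sided bound $\int f \le \int \soelldense{f} \le C(d)\int \soelldense[0]{f}$ uniformly in small $s$. Combined with the bound ``$\sup \soelldense{f} \le e^d \sup f$'' from Theorem~\ref{thm:psi_lowner}, this pins down the family inside a compact set.

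**The key steps, in order.** First I would record the needed analytic facts about $\psi_s$: it is admissible for all $s$, $\psi_s$ is increasing and convex, $\psi_s(t) \to \psi_0(t) = t$ as $s \to 0$ for each $t$, and the convergence is uniform on $[0,\infty)$ (one checks $\sup_{t\ge 0}(\psi_0(t) - \psi_s(t)) = O(s)$ directly from the formula, since the bracketed expression is $\tfrac{2t}{s} + O(s/t)$-type behavior and the $-\ln$ and $-1$ terms contribute an $O(s)$ correction). Second, I would prove the uniform integral bound: feasibility of a fixed comparison function gives $\int \soelldense{f}$ bounded above; $\int \soelldense{f} \ge \int f > 0$ gives the lower bound; hence $\alpha_s$ is bounded above (from $\sup \soelldense{f}\le e^d\sup f$) and bounded away from $0$, and $|\det A_s|$ is controlled from above and below (since $\int \alpha_s e^{-\psi_s(\enorm{A_s x})}\di x = \alpha_s |\det A_s|^{-1}\int e^{-\psi_s(\enorm{y})}\di y$ and $\int e^{-\psi_s(\enorm y)}\di y \to \int e^{-\enorm y}\di y$). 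Third, I would show the supports/mass cannot escape: since $f \le \soelldense{f}$ and $f$ has positive integral, $a_s$ stays bounded and $A_s$ has bounded operator norm and bounded-below smallest singular value (otherwise $\soelldense{f}$ would fail to cover a fixed superlevel set of $f$, or would have blowing-up integral). So $(\alpha_s, A_s, a_s)$ lives in a compact set. Fourth, take any sequence $s_k \to 0$ with $(\alpha_{s_k}, A_{s_k}, a_{s_k}) \to (\alpha^*, A^*, a^*)$, with $A^*$ nonsingular by Step 3; then $\soelldense[s_k]{f} \to \ell^* := \alpha^* e^{-\psi_0(\enorm{A^*(x-a^*)})} \in \funclass{\psi_0}$ uniformly on $\R^d$ (combining uniform convergence of $\psi_{s_k}\to\psi_0$, continuity of the exponential and of $\psi_0$, and the uniform bounds to handle the tails). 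Fifth, pass the constraint and the optimality to the limit: $f \le \ell^*$ by pointwise passage; and $\int \ell^* \le \liminf \int \soelldense[s_k]{f} \le \liminf \int (\text{$\psi_{s_k}$-lift of }\soelldense[0]{f}) = \int\soelldense[0]{f}$, where the middle inequality uses that the $\psi_{s_k}$-lift of $\soelldense[0]{f}$ (same $A_0, a_0$, same or adjusted $\alpha$) dominates $f$ and converges to $\soelldense[0]{f}$; hence $\ell^*$ is feasible for the $s=0$ problem with integral at most the optimum, so by uniqueness (Theorem~\ref{thm:s_lowner} with $s=0$) $\ell^* = \soelldense[0]{f}$. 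Since every subsequence has a further subsequence converging to the same limit, the whole family converges, and the convergence is uniform on $\R^d$.

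**The main obstacle** I expect is Step 3 (a priori compactness of the parameters) together with controlling the uniform convergence on the noncompact domain $\R^d$ in Step 4. The delicate point is ruling out degeneration of $A_s$ — both that $\|A_s\|\to\infty$ (the function becoming too concentrated, which however would not drive the integral to zero if $\alpha_s$ compensates, so one must use the covering constraint $f \le \soelldense{f}$ against a fixed superlevel set $\{f \ge c\}$, which has positive measure) and that the smallest singular value of $A_s$ tends to $0$ (the function spreading out, which would force $|\det A_s|^{-1}\to\infty$ and, with $\alpha_s$ bounded below, $\int \soelldense{f}\to\infty$, contradicting the uniform upper bound). A secondary subtlety is that the comparison map sending the $\psi_0$-position $\soelldense[0]{f}$ to a feasible $\psi_s$-position needs the pointwise inequality $\psi_s \le \psi_0$ so that $e^{-\psi_s(\enorm{A_0(x-a_0)})} \ge e^{-\psi_0(\enorm{A_0(x-a_0)})} \ge f(x)/\alpha_0$, which must be verified from \eqref{eq:ellipsoid-function_correspondence}; once this is in hand, the upper bound $\int\soelldense{f}\le \alpha_0|\det A_0|^{-1}\int e^{-\psi_s(\enorm y)}\di y$ is immediate and converges to $\int \soelldense[0]{f}$. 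The rest is routine: uniform convergence of $\psi_s$ to $\psi_0$, dominated convergence for the integrals, and the standard ``every subsequence has a convergent sub-subsequence with the same limit'' upgrade to full convergence.
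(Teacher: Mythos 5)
Your proposal follows essentially the same route as the paper: first establish that the parameters $(A_s,\alpha_s,a_s)$ lie in a compact subset of $\ellipsd$ for small $s$ (the paper does this via Lemma~\ref{lem:integral_comparison} and Lemma~\ref{lem:psi_boundedness}, you re-derive the same bounds directly), then extract a convergent subsequence, observe the limit is feasible for the $s=0$ problem, bound its integral by $\int\soelldense[0]{f}$ using the monotonicity $\psi_s\le\psi_0$ (which is exactly \eqref{eq:ell_monotonic_in_s}), and invoke uniqueness. The subsequence-refinement trick at the end is standard and correct.

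One technical claim you make is false as stated: you assert $\sup_{t\ge 0}\bigl(\psi_0(t)-\psi_s(t)\bigr)=O(s)$. Expanding \eqref{eq:ellipsoid-function_correspondence} for $t/s$ large gives $\psi_s(t) = t - \tfrac{s}{2}\log(2t/s) - \tfrac{s}{2} + O(s^2/t)$, so $\psi_0(t)-\psi_s(t)\sim\tfrac{s}{2}\log t$ diverges as $t\to\infty$ for fixed $s>0$; the difference is \emph{not} uniformly $O(s)$. This does not break the proof, because what you actually need is uniform convergence of $e^{-\psi_s}$ to $e^{-\psi_0}$ on $[0,\infty)$, and that survives: for $s\in[0,1]$ one has $\psi_1\le\psi_s\le\psi_0$, so all the functions $e^{-\psi_s}$ are squeezed between $e^{-\psi_0}$ and $e^{-\psi_1}$, both of which vanish at infinity. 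Uniform smallness on the tail plus Dini's theorem (monotone pointwise convergence of continuous functions on a compact set) then gives uniform convergence on $[0,\infty)$. You should replace the incorrect $O(s)$ justification with this equi-tightness argument; the paper sidesteps the issue entirely by phrasing the sufficiency of parameter convergence as ``clear,'' but your version requires the fix.
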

In other words, $\soelldense{f} \to \soelldense[0]{f}$ uniformly on $\R^d.$ That is, the L\"owner $s$-functions of $f$
can be considered as a reasonable extension of the L\"owner function in the sense of~\cite{li2019loewner}.

Probably, the most striking difference between the John $s$-functions and the L\"owner $s$-functions
appears in the case $s = \infty.$ 
We prove the following statement.
\begin{thm}\label{thm:lowner_infinity_existence}
	Let $f\colon\Red\to[0,\infty)$ be a an upper semi-continuous log-concave  function of finite positive integral.
	Then the following assertions are equivalent:
	\begin{enumerate}
		\item\label{ass:thm:lowner_infinity_liminf} 
			\[
				\liminf\limits_{s \to \infty} \int_{\R^d} \soelldense{f}  < \infty;
			\]
		\item\label{ass:thm:lowner_infinity_exists_any}
			There exists a Gaussian density $G$ such that $f \leq G$;
		\item\label{ass:thm:lowner_infinity_exists_unique}
			There exists a unique solution to problem
			\begin{equation}\label{eq:s_infinity_problem}
				\min\limits_{\ell \in \funclass{\psi_\infty}}
					\int_{\R^d} \ell 
					\quad \text{subject to}\quad
					f \leq \ell.
			\end{equation}
		Also, the L\"owner $s$-functions of $f$ converge uniformly {on $\R^d$}
		to the solution to problem~\eqref{eq:s_infinity_problem}
		as $s \to \infty.$
	\end{enumerate}
\end{thm}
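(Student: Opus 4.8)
The plan is to prove the cyclic chain $(\ref{ass:thm:lowner_infinity_liminf})\Rightarrow(\ref{ass:thm:lowner_infinity_exists_any})\Rightarrow(\ref{ass:thm:lowner_infinity_exists_unique})\Rightarrow(\ref{ass:thm:lowner_infinity_liminf})$ and to extract the convergence statement in~(\ref{ass:thm:lowner_infinity_exists_unique}) along the way. The backbone of the argument is a single compactness principle: \emph{if $s_k\to\infty$ and $\sup_k\int_{\R^d}\soelldense[s_k]{f}<\infty$, then, after passing to a subsequence, $\soelldense[s_k]{f}$ converges pointwise on $\R^d$ to a Gaussian density $G$ with $f\le G$ and $\int_{\R^d}G\le\liminf_k\int_{\R^d}\soelldense[s_k]{f}$.} To prove it I would write $\soelldense[s]{f}(x)=\alpha_s\,e^{-\psi_s(\enorm{A_s(x-a_s)})}$, factor $A_s=\sqrt{2s}\,B_s$ so that $\psi_s(\enorm{A_s(x-a_s)})=\psi_s\!\left(\sqrt{2s}\,\enorm{B_s(x-a_s)}\right)$, and first record the elementary facts about $t\mapsto\psi_s(\sqrt{2s}\,t)$ that follow from~\eqref{eq:ellipsoid-function_correspondence}: it is bounded above by $t^2$ for every $t\ge0$, it tends to $t^2$ uniformly on compacta as $s\to\infty$, and it retains linear growth at infinity uniformly for $s\ge1$, so that $c\le\int_{\R^d}e^{-\psi_s(\sqrt{2s}\enorm z)}\di z\le C$ with $0<c\le C<\infty$ independent of $s\ge1$. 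Then I would control the parameters: by the final assertion of Theorem~\ref{thm:psi_lowner} (whose hypothesis~(1) $\psi_{s_k}$ satisfies) one has $\alpha_{s_k}=\sup\soelldense[s_k]{f}\in[\sup f,\ e^d\sup f]$; fixing a small $\delta>0$ so that the superlevel set $\{f\ge\delta\}$ is a convex body, the inclusion $\{f\ge\delta\}\subseteq\{\soelldense[s_k]{f}\ge\delta\}$ traps $B_{s_k}(\{f\ge\delta\}-a_{s_k})$ in a fixed Euclidean ball and so bounds $\norm{B_{s_k}}$; the identity $\int_{\R^d}\soelldense[s_k]{f}=\alpha_{s_k}\enorm{\det B_{s_k}}^{-1}\int_{\R^d}e^{-\psi_{s_k}(\sqrt{2s_k}\enorm z)}\di z$ together with $\int f\le\int_{\R^d}\soelldense[s_k]{f}\le\sup_k\int_{\R^d}\soelldense[s_k]{f}$ bounds $\enorm{\det B_{s_k}}$ away from $0$ and $\infty$; and finally $\soelldense[s_k]{f}(x_0)\ge f(x_0)>0$ at a fixed $x_0\in\{f\ge\delta\}$, combined with the control of $B_{s_k}$, bounds $a_{s_k}$. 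A subsequence then has $\alpha_{s_k}\to\alpha$, $B_{s_k}\to B\in\operatorname{GL}(d)$, $a_{s_k}\to a$, and the uniform-on-compacta convergence $\psi_{s_k}(\sqrt{2s_k}\cdot)\to(\cdot)^2$ yields $\soelldense[s_k]{f}\to\alpha e^{-\enorm{B(\cdot-a)}^2}=:G$ pointwise; $f\le G$ is inherited from $f\le\soelldense[s_k]{f}$, and $\int_{\R^d}G\le\liminf_k\int_{\R^d}\soelldense[s_k]{f}$ is Fatou's lemma.

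Given the principle, $(\ref{ass:thm:lowner_infinity_liminf})\Rightarrow(\ref{ass:thm:lowner_infinity_exists_any})$ is immediate by applying it along a sequence realising the finite $\liminf$. For $(\ref{ass:thm:lowner_infinity_exists_any})\Rightarrow(\ref{ass:thm:lowner_infinity_exists_unique})$ I would note that $\psi_\infty(t)=t^2$ is admissible and satisfies condition~(1) of Theorem~\ref{thm:psi_lowner}, so that, a dominating $\ell\in\funclass{\psi_\infty}$ being assumed to exist, the theorem directly yields the existence and uniqueness of the solution $G^\ast$ to~\eqref{eq:s_infinity_problem}. For $(\ref{ass:thm:lowner_infinity_exists_unique})\Rightarrow(\ref{ass:thm:lowner_infinity_liminf})$ I would write $G^\ast(x)=\alpha^\ast e^{-\enorm{A^\ast(x-a^\ast)}^2}$ and test~\eqref{eq:s_problem} against the explicit competitor $\ell_s(x)=\alpha^\ast e^{-\psi_s(\enorm{\sqrt{2s}\,A^\ast(x-a^\ast)})}\in\funclass{\psi_s}$: since $\psi_s(\sqrt{2s}\,t)\le t^2$ we get $\ell_s\ge G^\ast\ge f$, hence $\int_{\R^d}\soelldense[s]{f}\le\int_{\R^d}\ell_s=\alpha^\ast\enorm{\det A^\ast}^{-1}\int_{\R^d}e^{-\psi_s(\sqrt{2s}\enorm z)}\di z$, and dominated convergence (using the uniform linear lower growth of $\psi_s$) gives $\int_{\R^d}e^{-\psi_s(\sqrt{2s}\enorm z)}\di z\to\pi^{d/2}$; thus $\limsup_{s\to\infty}\int_{\R^d}\soelldense[s]{f}\le\int_{\R^d}G^\ast<\infty$, which is considerably stronger than~(\ref{ass:thm:lowner_infinity_liminf}).

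To close the loop with the convergence statement: under~(\ref{ass:thm:lowner_infinity_exists_unique}) the bound $\limsup_{s\to\infty}\int_{\R^d}\soelldense[s]{f}\le\int_{\R^d}G^\ast<\infty$ just obtained makes every sequence $s_k\to\infty$ eligible for the compactness principle, so each subsequential limit $G$ is a Gaussian with $f\le G$ and $\int_{\R^d}G\le\int_{\R^d}G^\ast$; being feasible for~\eqref{eq:s_infinity_problem} with integral at most the optimal value, $G$ must equal $G^\ast$ by uniqueness. Hence $\soelldense[s]{f}\to G^\ast$ pointwise, with convergence of the parameters $\alpha_s,B_s,a_s$, and this is promoted to uniform convergence on $\R^d$ exactly as for Theorem~\ref{thm:zero_limit}: on compacta it is clear from the convergence of parameters and $\psi_s(\sqrt{2s}\cdot)\to(\cdot)^2$, while outside a large ball the boundedness of $\norm{B_s^{-1}}$ and the uniform linear lower growth of $\psi_s$ dominate both $\soelldense[s]{f}$ and $G^\ast$ by a single exponentially decaying function. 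I expect the main obstacle to be the compactness principle — concretely, excluding the degeneration of the optimal Gaussians (becoming singular, or infinitely concentrated, or running off to infinity) as $s\to\infty$; this is precisely where the $\sqrt{s}$-rescaling hidden inside $A_s$, the a priori integral bound, and the fixed ``girth'' of $f$ recorded by the body $\{f\ge\delta\}$ must be played off against one another. The only other nontrivial point, the behaviour of $\psi_s(\sqrt{2s}\,\cdot)$ as $s\to\infty$, is an elementary computation with~\eqref{eq:ellipsoid-function_correspondence}.
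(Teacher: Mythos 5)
Your proposal is correct and tracks the same technical spine as the paper's proof — the $\sqrt{s}$-rescaling of $A_s$, the identity $\psi_s(\sqrt{s}\,t)\to t^2/2$, a compactness argument on the parameters $(A_s,\alpha_s,a_s)$, and the invocation of Theorem~\ref{thm:psi_lowner} with $\psi_\infty(t)=t^2$ — but it organizes the argument differently and more self-containedly. The paper proves the chain $(3)\Rightarrow(2)\Rightarrow(1)\Rightarrow(3)$, with $(2)\Rightarrow(1)$ outsourced to Lemma~\ref{lem:lowner_gaussian_approx} (that the L\"owner $s$-functions of a \emph{Gaussian} converge to that Gaussian), whose proof in turn requires the monotonicity of $s\mapsto\psi_s(\sqrt{s}\,t)$ together with a symmetry reduction. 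You instead run the cycle $(1)\Rightarrow(2)\Rightarrow(3)\Rightarrow(1)$ and replace that lemma by an explicit competitor $\ell_s\in\funclass{\psi_s}$ built from the optimal Gaussian $G^\ast$: the one-line inequality $\psi_s(\sqrt{2s}\,t)\le t^2$ (which is indeed an elementary calculus fact, equivalent to $v-\ln\tfrac{1+v}{2}-1\le\tfrac{v^2-1}{4}$ for $v\ge1$) gives $\ell_s\ge G^\ast\ge f$, and dominated convergence gives $\int\ell_s\to\int G^\ast$. This is lighter: you only need the one-sided bound, not monotonicity. Likewise, your ``compactness principle'' inlines and streamlines what the paper farms out to Lemma~\ref{lem:psi_boundedness}, Corollary~\ref{cor:boundness_in_infinity}, and Lemma~\ref{lem:limit_lowner_s-ell_infinity} (the last of which leans on a result from a companion paper); your direct computation $\psi_{s_k}(\sqrt{2s_k}\,\cdot)\to(\cdot)^2$ locally uniformly, combined with the parameter bounds and the uniform linear lower growth $\psi_s(\sqrt{2s}\,t)\ge\psi_1(\sqrt{2}\,t)$ for $s\ge1$, achieves the same end without the external reference. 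The paper's modular version has the advantage of isolating reusable lemmas (indeed Lemma~\ref{lem:lowner_gaussian_approx} is used twice in the paper's proof of the convergence clause); yours has the advantage of being readable in one pass and exposing exactly which elementary inequality on $\psi_s$ drives the whole theorem. One small caveat: in the compactness step, the argument that $\norm{B_{s_k}}$ is bounded from the trapping of $B_{s_k}(\{f\ge\delta\}-a_{s_k})$ in a fixed ball implicitly uses that $\{f\ge\delta\}$ contains a ball of fixed radius (so that the image under $B_{s_k}$ has diameter at least $2\vartheta\norm{B_{s_k}}$); you should say this, as it is exactly the normalization $\vartheta\ball{d}\subset[f\ge\Theta]$ the paper fixes at the start of Lemma~\ref{lem:psi_boundedness}.
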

Surprisingly enough, it was shown in~\cite{ivanov2020functional}
that the Gaussian density of maximal integral below a given upper semi-continuous log-concave  function  of positive integral
is not necessarily unique.

Let $K$ be a convex body in $\R^d$ and $L_K$ be its L\"owner ellipsoid.
The ratio  
\[
	\left( 
		\frac{\vol{d} L_K}{\vol{d} K}
	\right)^{1/d} 
\]
is called the \emph{outer volume ratio} of $K.$
Using the reverse Brascamp--Lieb inequality, 
F. Barthe~\cite{barthe1998reverse} showed that for any positive integer $d$ and any convex body 
$K \in \R^d,$ the outer volume ratio of $K$
is at most $\Theta \sqrt{d}$
for some positive constant $\Theta.$
We extend this result to the setting of log-concave functions.
\begin{thm}\label{thm:s-outer_volume_ratio} 
	Fix $s \in [0, \infty).$
	There exists $\Theta_s$
	such that for any positive integer $d$ and an upper semi-continuous log-concave  function $f\colon \Red \to [0,\infty)$ of finite integral,
	the following inequality holds:
	\[
		\left( 
		\frac{\int_{\Red} \soelldense{f} }{\int_{\Red}  f}
		\right)^{1/d} \leq  
		 \Theta_s  \sqrt{d}.
	\] 
\end{thm}

\subsection{Organization of the paper.}

In Section~\ref{sec:notation},
we introduce the basic definitions and all needed terminology.

In Section~\ref{sec:classes},
we discuss the properties of existence of a solution to problem~\eqref{eq:psi_problem} in a broad {sense}.
To establish the existence, we show in Lemma~\ref{lem:psi_boundedness}
that the set of parameters of problem~\eqref{eq:psi_problem} is a bounded set in a finite-dimensional linear space.
This yields a compactness result,
{namely,} the convergence of minimizing sequences (see Corollary~\ref{cor:existence}).
We also explain one of {our tools},
\emph{interpolation} between the functions of the class~$\funclass{\psi}.$
We use this {tool}  to show  that 
all the solutions to problem~\eqref{eq:psi_problem} are 
{necessarily} translates of each other (see  Theorem~\ref{thm:uniqueness_upto_translation}).

In Section~\ref{sec:uniqueness}, {we investigate}
the question of uniqueness of a solution to problem~\eqref{eq:psi_problem}.
Given two functions $\ell_1,\ell_2\in\funclass{\psi}$ that are translates of each other,
we construct {a special function $\ell\in\funclass{\psi}$ that satisfies two conditions:}
First, $\ell_1 \leq \ell$ and $\ell_2 \leq \ell;$ second,  
$\ell$ is  of smaller integral than that of the functions $\ell_1$ or $\ell_2$
(see Lemma~\ref{lem:psi_sausage_lemma_st_increasing} and Lemma~\ref{lem:psi_sausage_lemma_bounded_domain}).
We note here
that we use two different geometric ideas
when the function $\psi$ satisfies different conditions in Theorem~\ref{thm:psi_lowner}.
Using this construction, we immediately get the
uniqueness result (see Theorem~\ref{thm:lowner_uniqueness_general}).
We also show in Subsection~\ref{subsec:chimeras}
that there is no uniqueness of a solution 
if $\psi$ does not satisfy both  conditions in Theorem~\ref{thm:psi_lowner}.

In Section~\ref{sec:existence},
{we show in Lemma \ref{lem:psi_lin_growth_bound}} that   a solution to problem~\eqref{eq:psi_problem} exists if $\psi$ is of linear growth at infinity.
In Lemma~\ref{lem:lowner_height_bounded},
we obtain the upper bound on the {supremum} of a solution
in terms of the supremum of $f$.
Summarizing the results,
we {complete the proof of} Theorem~\ref{thm:psi_lowner}.

In Section~\ref{sec:s_lowner},
we restrict ourselves to the function $\psi_s$, $s\in[0,+\infty)$.
We recall the definition of John $s$-functions introduced in~\cite{ivanov2020functional}.
Using the Legendre transform,
we obtain several simple properties of the function $\psi_s$ and
show that problem~\eqref{eq:psi_problem} and the problem considered in~\cite{ivanov2020functional} are dual.
This explains our motivation for introducing the function $\psi_s$.
We prove Theorem~\ref{thm:s_lowner} and discuss {the} properties of L\"owner $s$-functions.
Next, we consider the limit case as $s\to 0$ and prove Theorem~\ref{thm:zero_limit}.

In Section~\ref{sec:gaussians},
we prove Theorem~\ref{thm:lowner_infinity_existence}.
This shows that the choice of $\psi_{\infty}$ is indeed reasonable.

In Section~\ref{sec:outer_integral_ratio},
we prove Theorem~\ref{thm:s-outer_volume_ratio} and discuss its interrelation with the notions of volume ratio and outer volume ratio. 

Finally, in Section~\ref{sec:duality},
the duality between John $s$-function introduced in~\cite{ivanov2020functional} and our L\"owner $s$-function is discussed.

\section{Notation, Basic Terminology}\label{sec:notation}

\subsection{Matrices}

We will use $\prec$
to denote the standard partial order on the cone of positive semi-definite matrices,
that is,
we write $A \prec B$ if $B - A$ is positive-definite.
We recall the additive and the multiplicative form of \emph{Minkowski's determinant  inequality}.
Let $A$ and $B$ be positive-definite matrices of order $d$.
Then, for any $\lambda \in (0,1),$
	\begin{equation}\label{eq:minkowski_det_ineq}
		(\det( \lambda A + (1 - \lambda) B))^{1/d}
			\geq
			\lambda (\det A)^{1/d}
			+ (1 - \lambda) (\det B)^{1/d}
	\end{equation}
with equality if and only if $A = cB$ for some $c > 0$,
and 
	\begin{equation}\label{eq:minkowski_det_multipl_ineq}
		\det(\lambda A + (1 - \lambda)B)
			\geq
				(\det A)^{\lambda} \cdot (\det B)^{1 -\lambda}
	\end{equation}
with equality if and only if $A = B.$

\subsection{Functions. Log-concave functions}

A function $\psi\colon\Red\to\Re\cup\{+\infty\}$ is called \emph{convex} if 
$\psi((1-\lambda)x+\lambda y)\leq (1-\lambda)\psi(x)+\lambda\psi(y)$ for every 
$x,y\in\Red$ and $\lambda\in[0,1]$. 
A function $f$ on $\Red$ is \emph{logarithmically concave} (or, 
\emph{log-concave} for short) if $f=e^{-\psi}$ for a convex function $\psi$ on 
$\Red$. 
We say that a log-concave function $f$ on $\Red$ is a \emph{proper 
log-concave function} if $f$ is upper semi-continuous and has finite positive 
integral.

Clearly, if $f$ and $g$ are log-concave functions, then
	\begin{equation}\label{eq:psi}
		f \leq g
		\quad \Longleftrightarrow \quad
		-\log g \leq -\log f .
	\end{equation}

For a function $f\colon\Red\to\Re$ and a scalar $\alpha\in\Re$,
we denote the \emph{superlevel set} of $f$ by 
\[
	[f \geq \alpha] = \{x \in \Red \st f(x) \geq \alpha\}.
\]	
	
The $L_{\infty}$ norm of a function $f$ is denoted as $\norm{f}.$
Recall that the \emph{effective domain} of a convex function $\phi\colon\R^d\to\R\cup\{+\infty\}$
is the set $\dom \phi=\{x \in \R^d \st \phi(x) < +\infty\}.$

\subsection{Concept of duality}\label{subsec:duality_def}

Recall the definition of the classical \emph{convex conjugate} transform
(or \emph{Legendre transform})
$\legendre$ defined for functions $\phi: \R^d \to \R\cup \{-\infty, \infty\}$ by
\[
	(\legendre{\phi})(y) = \sup\limits_{x \in \R^d} \{\iprod{x}{y} - \phi(x)\}.
\]
The reasonable extension (justified in~\cite{artstein2007characterization, artstein2008concept, artstein2009concept})
of this notion to the setting of log-concave functions is the following.
Let $f = e^{-\psi} : \Red \to [0, \infty],$ 
then its \emph{log-conjugate} (or \emph{polar}) function is defined by
\[
	\loglego{f}(y) = e^{- (\legendre \psi)(y)} = 
	\inf\limits_{x \in \R^d} \frac{e^{-\iprod{x}{y}}}{f(x)}.
\]
Clearly, the log-conjugate function of any function is log-concave. 
It is known~\cite{artstein2004santalo} that the log-conjugate function of a proper log-concave function
is a proper log-concave function.
Also, if $f$ and $g$ are log-concave functions, then
\begin{equation}\label{eq:polarity_reverse}
	f \leq g
	\quad \Longleftrightarrow \quad
	\loglego{g} \leq \loglego{f}.
\end{equation}

The following result is proven in~\cite[Lemma 3.2]{artstein2004santalo}.
\begin{lemma}\label{lem:convergence_dual}
	Let $f, \{f_i\}_1^{\infty} : \R^d \to [0, \infty)$ be log-concave functions
	such that $f_n \to f$ on a dense set $A \subset \R^d.$
	Then
	\begin{enumerate}
		\item\label{ass:lem_conv_dual_convergence_integrals} 
			$\int_{\R^d} f_i \to \int_{\R^d} f$,
		\item\label{ass:lem_conv_dual_convergence_itself}
			$\loglego{(f_n)} \to \loglego{f}$ locally uniformly on the interior of the support of $\loglego{f}.$
	\end{enumerate}
\end{lemma}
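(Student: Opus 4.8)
\textbf{Proof plan for Theorem~\ref{thm:s-outer_volume_ratio}.}

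The plan is to reduce the functional inequality to the classical outer volume ratio estimate of Barthe via a slicing (Ball-type) argument. First I would fix $s \in [0,\infty)$ and a proper log-concave $f\colon\Red\to[0,\infty)$, and work with $\ell := \soelldense{f}$, the L\"owner $s$-function of $f$, which exists and is unique by Theorem~\ref{thm:s_lowner}. By definition $\ell \in \funclass{\psi_s}$, so $\ell(x) = \alpha e^{-\psi_s(\enorm{A(x-a)})}$ for some $A \in \operatorname{GL}(d)$, $\alpha>0$, $a\in\Red$; after an affine change of variables (which scales both integrals in~\eqref{eq:s-outer_volume_ratio} by the same factor $\enorm{\det A}^{-1}$) we may assume $\ell(x) = \alpha e^{-\psi_s(\enorm{x})}$, and after scaling $f$ and $\ell$ by a constant we may assume $\alpha = \norm{f}$ — or more conveniently normalise $\norm{\ell} = \alpha = 1$. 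The key point is that $f \le \ell$, so by the layer-cake (Cavalieri) representation $\int_{\Red} f = \int_0^{\infty}\vol{d}[f\ge t]\,\di t$ and likewise for $\ell$, and since $[f\ge t] \subseteq [\ell \ge t]$ for every $t$, we get control slice by slice; but we need a lower bound on $\int f$, not an upper bound, so this direction alone is not enough and I will instead compare the two integrals through a single well-chosen superlevel set.

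The main idea: choose a threshold $t_0 \in (0,1)$ (depending only on $s$ and $d$, or better only on $s$ after optimising in the final step) and consider $K := [f \ge t_0]$. This is a convex body (possibly empty — but for $t_0$ small enough it is not, since $\int f > 0$). On one hand, $K \subseteq [\ell \ge t_0] = \{x : \psi_s(\enorm{x}) \le \ln(1/t_0)\} = r_0\ball{d}$, a Euclidean ball of some radius $r_0 = r_0(t_0,s)$, because the superlevel sets of $\ell$ are concentric balls; this already shows $K$ is bounded. On the other hand, the L\"owner ellipsoid $L_K$ of $K$ is a Euclidean ball centred where? — not necessarily at the origin. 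Here is the crux: I claim the optimality of $\ell$ forces $r_0\ball{d}$ to be close to the L\"owner ellipsoid of $K$ in a quantitative sense. Indeed, if the L\"owner ellipsoid $L_K$ of $K := [f\ge t_0]$ were much smaller than $r_0\ball{d}$, one could build a competitor in $\funclass{\psi_s}$ — roughly, replace the ball $r_0\ball{d}$ by the affine position sending $L_K$ to it — lying above $f$ with strictly smaller integral, contradicting that $\ell$ is the minimiser. Making this precise is the technical heart of the argument: one needs that $f$, being log-concave with $f \le \ell$, is ``squeezed'' between $t_0$ on $K = [f\ge t_0]$ and $\ell$ outside, so that any affine image of the profile $e^{-\psi_s(\enorm{\cdot})}$ scaled to contain $K$ in its $t_0$-superlevel set automatically dominates $f$ everywhere (this uses log-concavity of $f$ together with the convexity/monotonicity of $\psi_s$ — essentially the same one-dimensional comparison used to prove $\funclass{\psi_s}$ contains dominating functions in Lemma~\ref{lem:psi_lin_growth_bound}). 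Granting this, optimality gives $\vol{d}(r_0\ball{d}) \le \vol{d} L_K \cdot C(s,d)$ for an explicit constant, hence by Barthe's theorem $(\vol{d}(r_0\ball{d})/\vol{d} K)^{1/d} \le \Theta\sqrt d \cdot C(s,d)^{1/d}$.

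Finally I would assemble the estimate. We have
\[
\int_{\Red} \ell = \int_0^{1}\vol{d}\{x:\psi_s(\enorm{x})\le\ln(1/t)\}\,\di t
= \vol{d}(\ball{d})\int_0^1 \rho_s(t)^d\,\di t,
\]
where $\rho_s(t)$ is the radius of the $t$-superlevel set of $e^{-\psi_s(\enorm{\cdot})}$; this integral is a finite constant $c_s$ times $\vol{d}(\ball{d})$, and by the admissibility of $\psi_s$ (so $e^{-\psi_s(\enorm{\cdot})}$ has finite integral on $\Red$) one checks $c_s^{1/d} \le C'_s\sqrt d$ for a constant $C'_s$ depending on $s$ — this is where the dimensional $\sqrt d$ ultimately comes from, and where I must be careful that the constant does not itself blow up with $d$ (for $s=0$ this is the gamma-function estimate $\Gamma(d+1)^{1/d}\asymp d/e$, for general $s$ a similar Laplace-type asymptotic). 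On the other side $\int_{\Red} f \ge \int_K f \ge t_0 \vol{d} K$ by the trivial bound on $K = [f\ge t_0]$, though a slightly sharper bound $\int_{\Red} f \ge t_0\vol{d} K \cdot(\text{const})$ using that $f$ is log-concave and exceeds a fixed fraction of $t_0$ on a dilate of $K$ may be needed to keep constants clean. Combining,
\[
\left(\frac{\int_{\Red}\ell}{\int_{\Red} f}\right)^{1/d}
\le \left(\frac{c_s\vol{d}(\ball{d})\,\rho_s(t_0)^d}{\;\rho_s(t_0)^d\cdot t_0\vol{d} K}\right)^{1/d}\!\!\!
\cdots
\le \frac{c_s^{1/d}}{t_0^{1/d}}\cdot\left(\frac{\vol{d}(r_0\ball{d})}{\vol{d} K}\right)^{1/d}
\le \Theta_s\sqrt d,
\]
where in the last step I plug in $\vol{d}(r_0\ball{d}) = \vol{d}(\ball{d})\rho_s(t_0)^d$, the bound $(\vol{d}(r_0\ball{d})/\vol{d} K)^{1/d}\le \Theta\sqrt d\,C(s,d)^{1/d}$ from the previous paragraph, absorb $t_0^{-1/d}\to 1$ and $C(s,d)^{1/d}$ and $c_s^{1/d}/\sqrt d$ into a single $s$-dependent constant $\Theta_s$, and choose $t_0$ (say $t_0 = 1/e$, matching the scale on which $\psi_s$ is comparable to $\psi_0(t)=t$) to finalise. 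The main obstacle I anticipate is the comparison step in the previous paragraph — establishing, via optimality of the L\"owner $s$-function, that the ball $r_0\ball{d}$ cannot be essentially larger than the classical L\"owner ellipsoid of the superlevel set $[f\ge t_0]$; this requires verifying that an affine rescaling of $e^{-\psi_s(\enorm{\cdot})}$ adapted to $L_{[f\ge t_0]}$ still lies above $f$, which in turn rests on a one-dimensional log-concavity/convexity comparison that I would isolate as a short lemma before the main proof.
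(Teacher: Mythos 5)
Your proposal is a proof plan for Theorem~\ref{thm:s-outer_volume_ratio}, the outer integral ratio bound, but the statement you were asked to prove is Lemma~\ref{lem:convergence_dual}, which is an entirely different result: it asserts that if a sequence of log-concave functions $f_n$ converges to a log-concave $f$ on a dense subset of $\R^d$, then the integrals $\int f_n$ converge to $\int f$, and the log-conjugates $\loglego{(f_n)}$ converge to $\loglego{f}$ locally uniformly on the interior of $\supp\loglego{f}$. Nothing in your write-up -- the slicing argument, the competitor construction, Barthe's reverse Brascamp--Lieb inequality -- has any bearing on this convergence lemma, so there is no overlap to compare.

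For the record, the paper does not give its own proof of Lemma~\ref{lem:convergence_dual}; it is quoted verbatim from \cite[Lemma~3.2]{artstein2004santalo}. A genuine proof of it would proceed along different lines: one first shows that pointwise convergence of log-concave functions on a dense set propagates to pointwise convergence on the interior of $\supp f$ (using convexity of $-\log f_n$ and the fact that a sequence of convex functions converging on a dense subset of an open set converges locally uniformly there), then deduces convergence of integrals via a dominated-convergence argument exploiting a uniform exponential upper bound of the type $f_n(x)\leq C e^{-c\enorm{x}}$ valid for $n$ large, and finally transfers the local uniform convergence to the conjugates by standard properties of the Legendre transform under epi-convergence. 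If you wish to attempt the lemma again, that is the road map; your current text should be filed under Theorem~\ref{thm:s-outer_volume_ratio}, where it does contain a plausible reduction to Barthe's theorem, though the ``optimality forces $r_0\ball{d}$ close to $L_K$'' step would still need to be made rigorous.
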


\subsection{Ellipsoids}

We denote the Euclidean unit ball in $\Re^n$ by $\ball{n}$, where $n$ will 
mostly be~$d$ or~$d+1.$
For a matrix $A \in \R^{d \times d}$ and a number $\alpha,$  $A\oplus\alpha$ denotes $(d+1)\times (d+1)$ matrix 
\[
	A \oplus \alpha
	=\left(
		\begin{array}{cc}
			A & 0\\
			0 & \alpha
		\end{array}
	\right).
\]
 We introduce the convex cone 
	\begin{equation*}
		\ellipsd = \left\{(A\oplus\alpha, a) \st A\in\Re^{d\times d}
		\text{ is positive-definite}, \alpha>0, a \in \R^d \right\}.
	\end{equation*}
There is a one-to-one correspondence between $\ellipsd$
and the class of $(d+1)$-dimensional ellipsoids in $\R^{d+1}$ symmetric with respect to $\R^d$
(for example, $(A\oplus\alpha,a) \mapsto (A\oplus\alpha) \ball{d+1} + a$).
We will refer to the elements of $\ellipsd$ as  $d$-ellipsoids.

\subsection{Admissible convex functions}\label{subsec:adm_convex_func}

Clearly,  $\psi\colon [0, \infty) \to \R \cup \{+\infty\}$ is an
{admissible  function} if and only if $\psi$ has the following properties:
\begin{itemize}
	\item $\psi$ is convex with minimum at $0$
		(otherwise, $e^{-\psi(|t|)}$ is not log-concave);
	\item $\psi$ is lower semi-continuous
		(otherwise, $e^{-\psi(|t|)}$ is not upper semi-continuous);
	\item $\lim\limits_{t \to +\infty} \psi(t)= + \infty$
		(otherwise, the integral of $e^{-\psi(|t|)}$ equals $+ \infty$);
	\item $\dom \psi$ has positive measure
		(otherwise, the integral of $e^{-\psi(|t|)}$ equals zero).
\end{itemize}  
Recall that a convex function
is continuous (even locally Lipschitz) on the interior of the effective domain
(see~\cite[Proposition 2.2.6]{clarke1990optimization}).

We say that an admissible function $\psi$ is \emph{of linear growth}
if inequality $\psi(t) \leq c t$ holds for some constant $c$ and all $t \in [0,\infty).$
By convexity, $\psi$ is an admissible function of linear growth
if and only if the limit
	$\lim\limits_{t \to \infty} \frac{\psi(t)}{t}$ 
exists and is finite.
For example, $\psi_s$
is an admissible function of linear growth for any $s\in [0, \infty),$
as it is shown in~Subsection~\ref{subsec:john_s-func}

Let $\psi\colon [0, \infty) \to \R \cup \{+\infty\}$ be a convex function.  
Since $e^{-\psi(t) + c}=e^c e^{-\psi(t)},$
we have that $\funclass{\psi} = \funclass{\psi + c}$ for any constant  $c \in \R.$
We will use this observation and assume sometimes
that an admissible function is zero at zero.

\subsection{Classes of ellipsoidal functions}

We say that the functions of $\funclass{\psi}$ are 
\emph{$\psi$-ellipsoidal} functions.  
If $\psi$ is an admissible function, then all the functions of $\funclass{\psi}$
are proper log-concave functions.
 
Given $(A\oplus\alpha,a) \in \ellipsd,$
we  say that the $\psi$-ellipsoidal function on $\R^d$ defined by 
$x \mapsto \alpha \cdot e^{- \psi(\enorm{A(x -a)})}$
is \emph{represented} by $(A\oplus\alpha,a)$.
We use $\psiell$ to denote the $\psi$-ellipsoidal function
represented by $\upthing{E} = (A\oplus\alpha,a) \in \ellipsd.$ 
By definition, we have
	\begin{equation}\label{eq:psi_st_form}
		\psiellf{x}=\alpha 	\cdot \psiellf[\ball{d+1}]{A(x-a)}.
	\end{equation}
This simple identity plays a crucial role in  our proofs.

By the polar decomposition theorem, any $\psi$-ellipsoidal function is represented by 
a unique element of $\ellipsd.$ 
Thus,
\[
	\funclass{\psi} = 
		\left\{
			\alpha e^{- \psi(\enorm{A(x-a)})}  \st (A \oplus \alpha, a) \in \ellipsd
		\right\} = 
		\left\{
			\psiell  \st (A \oplus \alpha, a) \in \ellipsd
		\right\}.
\]
The number $\alpha$ is called the \emph{height} of the $\psi$-ellipsoidal function $\psiell[(A \oplus \alpha, a)].$ 

We use $\psiovolb$ to denote the integral of $\psi$-ellipsoidal function
represented by the unit Euclidean ball $\ball{d+1}$,
that is,
	$$\psiovolb=\int_{\R^d} \psiell[\ball{d+1}].$$
Clearly, $\psiovolb$ is a positive number.
Also, for an arbitrary $\upthing{E}=(A\oplus\alpha,a)\in\mathcal{E},$ we have that
	\begin{equation}\label{eq:psi_outer_volume}
		\int_{\R^d} \psiell =	 \frac{\alpha}{\det A} \cdot \psiovolb.
	\end{equation}
%

\subsection{Classes of \texorpdfstring{$\psi_s$}{psis}-ellipsoidal functions}
We will use  the following notations related to $\psi_s$
to stress the fact that we restrict ourselves to the admissible functions $\psi_s$ in the last four Sections: 
\begin{itemize}
	\item $s$-ellipsoidal function {\bf instead of} $\psi_s$-ellipsoidal function;
	\item $\sell{\upthing{E}}$ {\bf instead of} $\ell_{\psi_s, \upthing{E}};$
	\item $\ovolbs$ {\bf instead of}  $\psiovolb.$
\end{itemize}
That is, for any $d$-ellipsoid $\upthing{E}\in\ellips$ and the $s$-ellipsoidal function $\sell{\upthing{E}}$
identity~\eqref{eq:psi_outer_volume} takes the form
\begin{equation}\label{eq:s_outer_volume}
	\int_{\R^d} \sell{\upthing{E}}=\frac{\alpha}{\det A}\ovolbs.
\end{equation}

\section{Classes of \tpdfpsi-ellipsoidal functions}\label{sec:classes}

\subsection{Boundedness}
In this subsection we show that for any admissible function $\psi$,
we can restrict ourselves to the bounded in $\ellipsd$ set of parameters in problem~\eqref{eq:psi_problem}.
The main result of this subsection is the following lemma.
\begin{lemma}\label{lem:psi_boundedness}
	For any $f\colon\Red\to[0,\infty)$ be a proper log-concave function and $\delta>0$
	there exist  $\rho,\nu,\mu_1,\mu_2>0$
	with the following property.
	If for an admissible function
	$\psi\colon [0, \infty) \to \R \cup \{+\infty\}$
	with 
	$\psi(0)=0$ and $\upthing{E}=(A\oplus\alpha,a)\in\ellips$
	inequalities
		$$f \leq \psiell\quad \text{and}\quad \int_{\R^d} \psiell\leq\delta$$
	hold, then 	inequalities 
		\begin{equation}\label{eq:alpha_a_universal_bound}
			\enorm{a} \leq \rho
			\quad \text{and} \quad 
			\norm{f} \leq \alpha \leq \nu
		\end{equation}
	and
		\begin{equation} \label{eq:comparison_operator}
			\mu_1 \frac{\psiovolb}{
			\left(\lambda_1[\psi]\right)^{d-1}}\cdot \id \prec A \prec \mu_2 \lambda_1[\psi]\cdot \id
		\end{equation}
	hold.
\end{lemma}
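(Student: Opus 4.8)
The plan is to extract, from a single fixed proper log-concave function $f$ and a fixed integral bound $\delta$, universal geometric constraints on any $d$-ellipsoid $\upthing{E}=(A\oplus\alpha,a)$ representing a $\psi$-ellipsoidal function that sits above $f$ and has integral at most $\delta$. The key point is that the constants $\rho,\nu,\mu_1,\mu_2$ must not depend on $\psi$; so every estimate has to be phrased in terms of quantities that are intrinsic to $\psi$ (namely $\psiovolb$ and $\lambda_1[\psi]$) or intrinsic to $f$. First I would record the trivial but crucial normalization: since $\psi(0)=0$, the height of $\psiell$ is exactly $\alpha=\psiell(a)=\sup\psiell$, and the hypothesis $f\leq\psiell$ immediately forces $\norm{f}\leq\alpha$, giving the lower bound in~\eqref{eq:alpha_a_universal_bound} for free.

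Next I would fix a sublevel set of $f$ of positive measure: because $f$ is proper log-concave with $0<\int f<\infty$, there is a height $\tau>0$ and a convex body $K=[f\geq\tau]$ with $0<\vol{d}K<\infty$; I would also fix a point $x_0$ and a radius $r$ so that a ball $x_0+r\ball{d}\subseteq[f\geq\tau']$ for suitable $\tau'$. The inclusion $f\leq\psiell$ then says that on $K$ (resp. on that ball) we have $\psiell\geq\tau$, i.e. the superlevel set $[\psiell\geq\tau]$, which is the ellipsoid $a+A^{-1}\psi^{-1}([0,\log(\alpha/\tau)])\ball{d}$ — more precisely $\{x:\enorm{A(x-a)}\le \psi^{-1}(\log(\alpha/\tau))\}$ — contains $K$. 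From $K\subseteq[\psiell\geq\tau]$ and the fact that $K$ contains a ball of radius $r$ centered at $x_0$, I get two things: the center $a$ cannot be far from $x_0$ (bounding $\enorm{a}$ by $\rho$ once we control the size of the ellipsoid), and the ellipsoid $[\psiell\geq\tau]$ contains a fixed ball, which lower-bounds its volume and hence upper-bounds $\det(A^{-1})$ times a factor depending on $\psi^{-1}(\log(\alpha/\tau))$. Simultaneously, the integral bound $\int\psiell=\tfrac{\alpha}{\det A}\psiovolb\le\delta$ from~\eqref{eq:psi_outer_volume} couples $\alpha$, $\det A$, and $\psiovolb$; combining this with the volume lower bound on $[\psiell\ge\tau]$ and with $\alpha\ge\norm f$ should pin $\alpha$ between $\norm f$ and a universal $\nu$, and then feed back to bound $\enorm a\le\rho$.

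For the operator inequalities~\eqref{eq:comparison_operator} I would argue about the semi-axes of $A$ separately. The lower bound $A\succ \mu_1\tfrac{\psiovolb}{(\lambda_1[\psi])^{d-1}}\id$ says no semi-axis of the ellipsoid $A^{-1}\ball d$ is too long; if some semi-axis of $A^{-1}\ball d$ were enormous while the ellipsoid still contains the fixed body $K$, then $\det A^{-1}$ would be large and, via $\int\psiell=\tfrac\alpha{\det A}\psiovolb\le\delta$ together with $\alpha\ge\norm f$, we would get $\det A^{-1}\le \delta/(\norm f\,\psiovolb)\cdot(\text{const})$, contradicting the one-long-axis scenario after one controls the remaining $d-1$ axes by the fact that $[\psiell\ge\tau]\supseteq K$ forces each of them to be at least comparable to $\mathrm{inradius}(K)/\psi^{-1}(\cdot)$ — and $\lambda_1[\psi]$ is precisely the scale governing $\psi^{-1}$, since $\lambda_1[\psi]$ is the integral of the one-dimensional profile $e^{-\psi(|t|)}$ up to normalization and hence is comparable to the measure of $\{\psi\le 1\}$. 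For the upper bound $A\prec \mu_2\lambda_1[\psi]\id$, I would use that $f$ has positive integral over a fixed ball: if a semi-axis of $A^{-1}\ball d$ were too small, the ellipsoidal function $\psiell$ would decay too fast in that direction to dominate $f$ on that ball, again a contradiction, with the threshold set by $\lambda_1[\psi]$.

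The main obstacle I anticipate is making the estimates genuinely uniform in $\psi$: the natural bounds produce factors like $\psi^{-1}(\log(\alpha/\tau))$, and one must show these are controlled purely by $\lambda_1[\psi]$ (and the already-bounded quantity $\alpha$), without any further hypothesis on $\psi$ beyond admissibility and $\psi(0)=0$. This requires a clean one-dimensional lemma relating $\psi^{-1}$ at a fixed level to $\lambda_1[\psi]$ via log-concavity of $t\mapsto e^{-\psi(|t|)}$ — essentially, for a one-dimensional log-concave function normalized at its maximum, its integral and the width of any fixed superlevel set are comparable up to absolute constants. Once that one-dimensional dictionary is in place, the $d$-dimensional statement follows by applying it coordinate-wise in the principal axes of $A$, and assembling the pieces gives~\eqref{eq:alpha_a_universal_bound} and~\eqref{eq:comparison_operator} with explicit $\rho,\nu,\mu_1,\mu_2$ depending only on $f$ and $\delta$.
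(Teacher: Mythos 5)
Your plan shares the paper's starting point (a ball $\vartheta\ball{d}$ sitting inside a superlevel set $[f\geq\Theta]$ of $f$, which then forces the superlevel sets of $\psiell$ to be large), and your treatment of the lower bound on the smallest eigenvalue via $\int\psiell=\tfrac{\alpha}{\det A}\psiovolb$ combined with the upper bound on $\norm{A}$ is exactly what the paper does. But there are two points where your sketch either has a gap or takes a detour the paper avoids.

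The genuine gap is the upper bound $\alpha\leq\nu$. You write that the volume lower bound on $[\psiell\geq\tau]$, together with $\int\psiell=\tfrac{\alpha}{\det A}\psiovolb\leq\delta$, ``should pin $\alpha$.'' It does not. If you lower-bound $\vol{d}[\psiell\geq\tau]$ by a constant and use the crude lower bound $\psiovolb\geq\vol{d}\ball{d}\,\psi^{-1}(\log(\alpha/\tau))^d\cdot\tau/\alpha$, the $\alpha$'s cancel and you recover only a constraint relating $\delta,\tau,\vartheta$, not a bound on $\alpha$. The missing ingredient is a pointwise lower bound on $\psiell$ \emph{near its peak}: by log-concavity, $\psiell(a)=\alpha$ and $\psiell\geq\Theta$ on the sphere $a+\vartheta\,\partial\ball{d}$ force $\psiell(a+x)\geq\alpha(\Theta/\alpha)^{\enorm{x}/\vartheta}$ for $\enorm{x}\leq\vartheta$, whose integral over $\vartheta\ball{d}$ tends to $+\infty$ as $\alpha\to\infty$. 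That interpolation step (which is what the paper uses) is what converts ``large $\alpha$'' into ``large integral,'' contradicting $\int\psiell\leq\delta$. Without it, your argument does not bound $\alpha$.

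On the operator bounds, your proposed route — bound $\psi^{-1}$ at the relevant level, then translate this into $\lambda_1[\psi]$ via a one-dimensional comparison between the integral of a log-concave profile and the width of its superlevel sets — can be made to work, but it costs you an extra factor of roughly $\log(\alpha/\Theta)$ coming from the concavity inequality $\psi^{-1}(cs)\leq c\,\psi^{-1}(s)$, and so it forces you to bound $\alpha$ first and then track that dependence through. The paper sidesteps all of this by integrating $\psiell$ along the line through $a$ in the direction of the top eigenvector: that line integral is exactly $\tfrac{\alpha}{\norm{A}}\lambda_1[\psi]$, and the ball containment lower-bounds it by $2\Theta\vartheta$, giving $\norm{A}\leq\mu_2\lambda_1[\psi]$ directly with no inversion of $\psi$ at all. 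I'd recommend switching to the line-integral argument; it produces the stated constant $\lambda_1[\psi]$ with no loss and keeps the dependence on $\psi$ transparent, which is the whole point of the lemma.
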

\begin{proof}
	Put $\Theta = \norm{f}/2$ and consider 
	the superlevel set $ [f\geq \Theta].$
	Since $f$ is log-concave and has finite positive integral, 
	$[f\geq  \Theta]$ is a convex body in $\R^d.$ 
	Without loss of generality,
	we assume that $\vartheta \ball{d} \subset [f\geq \Theta]$
	for some $\vartheta > 0.$

	Since $f \leq \psiell,$ one sees that 
	$[f\geq \Theta] \subset [ \psiell \geq \Theta].$
	By the log-concavity of $\psiell$ and since the maximum of $\psiell$ is attained at $a,$ we have 
	$ \operatorname{co}\{\vartheta \ball{d}, a\} \subset [ \psiell \geq \Theta]
	.$ Hence,
	\[
		\delta \geq \int_{\R^d} \psiell \geq 
		\Theta \int_{\R^d}  \chi_{\operatorname{co}\{\vartheta \ball{d}, a\}} =
		\Theta \vol{d} \operatorname{co}\{\vartheta \ball{d}, a\} \geq 
		\Theta \frac{\vartheta^{d-1} \vol{d-1} \ball{d-1}}{d}   \enorm{a}.
	\]
	Thus there exists $\rho > 0$ depending only on $f$ and $\delta$ such that $|a|\leq \rho$.

	Clearly,  $\alpha \geq \norm{f}.$
	We proceed with an upper bound on $\alpha$.
	Using again the inclusion  $ \vartheta \ball{d} \subset [f\geq \Theta] \subset [ \psiell \geq \Theta]$
	and by symmetry, we conclude that
	$\vartheta \ball{d}+a \subset [\psiell \geq \Theta].$ 
	Therefore, for all $x\in \vartheta \ball{d}$
	we have
	\[
		\psiell(x+a)
		\geq \psiell(a)^{ 1 - \frac{\enorm{x}}{\vartheta} }
			\cdot \psiell\left(a + \vartheta  \frac{x}{|x|}\right)^{ \frac{\enorm{x}}{\vartheta} }
		\geq \alpha^{1 - \frac{\enorm{x}}{\vartheta} }
			\cdot \Theta^{ \frac{\enorm{x}}{\vartheta} }
		\geq \alpha
			\cdot \left(\frac{\Theta}{\alpha}\right)^{\frac{\enorm{x}}{\vartheta} }.
	\] 
	Therefore, 
	\[
		\delta
			\geq \int_{\R^d} \psiell
			\geq \int_{\vartheta \ball{d}} \psiell(x+a) \di x  
			\geq \alpha \int_{\vartheta \ball{d}} \left(\frac{\Theta}{\alpha}\right)^{\frac{\enorm{x}}{\vartheta} } \di x .
	\]
	By routine computation, the right-hand side tends to $+\infty$
	as $\alpha \to \infty.$
	The existence of $\nu$ follows. 
	This completes the proof of inequality~\eqref{eq:alpha_a_universal_bound}.

	We proceed with inequality~\eqref{eq:comparison_operator}.
	Let $l$ be the line passing through $a$ in the direction of an eigenvector of $A$ corresponding to the eigenvalue $\norm{A}.$ 
	Since $\vartheta \ball{d}+a \subset  [\psiell \geq \Theta].$ and by~\eqref{eq:psi_outer_volume}, we have
	\[
		2 \Theta \vartheta \leq \int_{l}\psiell=\frac{\alpha}{\norm{A}}\lambda_1[\psi]\leq \frac{\nu }{\norm{A}} \lambda_1[\psi].
	\]
	Thus, $\norm{A} \leq \mu_2\lambda_1[\psi]$ for some $\mu_2>0$.

	Let $\beta$ be the smallest eigenvalue of $A$. Using the obtained bound on $\norm{A}$ and identity~\eqref{eq:psi_outer_volume}, we get
	\[
		\delta 
		\geq \int_{\R^d}\psiell
		= \frac{\alpha}{\det A} \psiovolb
		\geq \frac{\alpha}{\norm{A}^{d-1} \beta} \psiovolb
		\geq \frac{\alpha}{\left( \mu_2 \lambda_1[\psi]\right)^{d-1}} \cdot \frac{1}{ \beta} \psiovolb.
	\]
	Thus, $\beta \geq \mu_1 \psiovolb/\lambda_1[\psi]^{d-1}$ for some $\mu_1>0$.
	This completes the proof.
\end{proof}

As an immediate consequence of Lemma~\ref{lem:psi_boundedness}, we have
\begin{cor}\label{cor:existence}
	Let $\psi\colon [0, \infty) \to \R \cup \{+\infty\}$ be an admissible function
	and let $f\colon \R^d \to [0, \infty)$ be a proper log-concave function.
	If there exists $\ell \in \funclass{\psi}$ such that $f \leq \ell$,
	then there exists a solution to problem~\eqref{eq:psi_problem}.
\end{cor}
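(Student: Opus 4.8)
The goal is to deduce Corollary~\ref{cor:existence} from Lemma~\ref{lem:psi_boundedness} by a compactness argument on minimizing sequences. Fix an admissible $\psi$ with, without loss of generality, $\psi(0)=0$, and suppose some $\ell_0\in\funclass{\psi}$ satisfies $f\leq\ell_0$. Let $\delta:=\int_{\R^d}\ell_0$, which is finite and positive. Set $I:=\inf\{\int_{\R^d}\ell \st \ell\in\funclass{\psi},\ f\leq\ell\}$; this infimum is over a nonempty set and $0\leq I\leq\delta<\infty$. Choose a minimizing sequence $\ell_n=\psiell[\upthing{E}_n]$ with $\upthing{E}_n=(A_n\oplus\alpha_n,a_n)\in\ellipsd$, $f\leq\ell_n$, and $\int_{\R^d}\ell_n\to I$; after discarding finitely many terms we may assume $\int_{\R^d}\ell_n\leq\delta$ for all $n$.

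Now apply Lemma~\ref{lem:psi_boundedness} with this $f$ and this $\delta$: it produces constants $\rho,\nu,\mu_1,\mu_2>0$ such that for every $n$ we have $\enorm{a_n}\leq\rho$, $\norm{f}\leq\alpha_n\leq\nu$, and $\mu_1\frac{\psiovolb}{(\lambda_1[\psi])^{d-1}}\cdot\id\prec A_n\prec\mu_2\lambda_1[\psi]\cdot\id$. Since $\psi$ is fixed, the quantities $\psiovolb$ and $\lambda_1[\psi]$ are fixed positive numbers, so the operator bounds say precisely that the symmetric positive-definite matrices $A_n$ lie in a fixed compact subset of the positive-definite cone (eigenvalues bounded below by a positive constant and above by a finite constant). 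Together with $\enorm{a_n}\leq\rho$ and $\norm{f}\leq\alpha_n\leq\nu$, the parameters $(A_n\oplus\alpha_n,a_n)$ range over a bounded subset of the finite-dimensional space of symmetric matrices times $\R^d$, whose closure is contained in $\ellipsd$ (the lower eigenvalue bound keeps limit matrices positive-definite, and $\alpha_n\geq\norm{f}>0$ keeps the limiting height positive). Passing to a subsequence, $A_n\to A$, $\alpha_n\to\alpha$, $a_n\to a$ with $(A\oplus\alpha,a)\in\ellipsd$; set $\ell:=\psiell[(A\oplus\alpha,a)]$.

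It remains to check that $\ell$ is feasible and optimal. Feasibility: for each fixed $x\in\R^d$, $\enorm{A_n(x-a_n)}\to\enorm{A(x-a)}$, and since $e^{-\psi(\enorm{\cdot})}$ is upper semi-continuous we get $\limsup_n\alpha_n e^{-\psi(\enorm{A_n(x-a_n)})}\leq\alpha e^{-\psi(\enorm{A(x-a)})}$ at points where $\psi$ is continuous, and one handles the (at most two) boundary points of $\dom\psi$ separately using lower semi-continuity of $\psi$; in any case $f(x)\leq\ell_n(x)$ for all $n$ forces, after taking the appropriate limit, $f(x)\leq\ell(x)$. Alternatively, and more cleanly, one observes that $\ell_n\to\ell$ pointwise on the set where $\psi$ is continuous at $\enorm{A(x-a)}$, which is a dense subset of $\R^d$, so Lemma~\ref{lem:convergence_dual}\eqref{ass:lem_conv_dual_convergence_integrals} gives $\int_{\R^d}\ell_n\to\int_{\R^d}\ell$, hence $\int_{\R^d}\ell=I$; and feasibility $f\leq\ell$ follows because $f\leq\ell_n$ on a dense set and both sides are upper semi-continuous, so the inequality extends to all of $\R^d$. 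Thus $\ell$ attains the infimum $I$ and is a solution to problem~\eqref{eq:psi_problem}.

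The only genuinely delicate point is the continuity of the parameter-to-function map near the boundary of $\dom\psi$: if $\dom\psi=[0,R]$ with $R<\infty$, then $\psi$ may jump to $+\infty$ at $R$, so $\ell_n(x)\to\ell(x)$ can fail exactly on the sphere $\{\enorm{A(x-a)}=R\}$, a set of measure zero. This is precisely why the convergence in Lemma~\ref{lem:convergence_dual} and the extension of the inequality $f\leq\ell$ are phrased via dense sets rather than everywhere; invoking that lemma, together with the measure-zero nature of the exceptional set, disposes of the difficulty without any additional estimate.
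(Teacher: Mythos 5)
Your proof is correct and follows essentially the same route as the paper: apply Lemma~\ref{lem:psi_boundedness} to a minimizing sequence to obtain boundedness of the parameters in $\ellipsd$, extract a convergent subsequence, and use Lemma~\ref{lem:convergence_dual}\eqref{ass:lem_conv_dual_convergence_integrals} to pass the integrals to the limit. You are more explicit than the paper in one useful respect: the paper does not spell out why the limit function is still feasible (i.e.\ satisfies $f\le\ell$), and your first feasibility argument --- fix $x$, use upper semi-continuity of $y\mapsto e^{-\psi(\enorm{y})}$ (equivalently lower semi-continuity of $\psi$) to get $\limsup_n \ell_n(x)\le\ell(x)$, then combine with $f(x)\le\ell_n(x)$ --- is clean and works at every $x\in\R^d$ without any case distinction about the boundary of $\dom\psi$; the caveat ``at points where $\psi$ is continuous'' there is actually unnecessary.

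One small caution about your ``alternatively, and more cleanly'' remark: the claim that $f\le\ell$ on a dense set together with upper semi-continuity of $f$ and $\ell$ forces $f\le\ell$ everywhere is not a valid general principle. If $f(x_0)>\ell(x_0)$ and $x_k\to x_0$ with $x_k$ in the dense set, upper semi-continuity only gives $\limsup_k f(x_k)\le f(x_0)$ and $\limsup_k\ell(x_k)\le\ell(x_0)$, which produces no contradiction, since $f$ may drop near $x_0$. In the present situation the exceptional set is an ellipsoidal sphere, $\ell$ is continuous there from the inside, and one could salvage the argument by exploiting log-concavity of $f$, but this is more delicate than your wording suggests. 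Since your $\limsup$ argument already settles feasibility pointwise with no exceptional set, I would drop the alternative or at least not present it as the cleaner version.
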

\begin{proof}
	If there is $\ell \in \funclass{\psi}$
	such that $f \leq \ell,$
	then, by  Lemma~\ref{lem:psi_boundedness},
	there is a minimizing sequence $\left\{\upthing{E}_i\right\}_1^{\infty}\subset\ellipsd$
	such that
	$f \leq \psiell[\upthing{E}_i]$ for all $i\in\N$,
	\begin{equation} \label{eq:cor_limit_general}
		\lim\limits_{i \to \infty} \upthing{E}_i= \upthing{E} \in \ellipsd
	\end{equation}
	and
	\begin{equation}\label{eq:cor_limit_integrals}
		\lim\limits_{i \to \infty} \int_{\R^d} \psiell[\upthing{E}_i]
		= \inf\limits_{\ell \in \funclass{\psi} } \left\{ \int_{\R^d} \ell  \st f \leq \ell \right\}.
	\end{equation}
	Next, by continuity of a convex function $\psi$ on the interior of its effective domain,
	identity~\eqref{eq:cor_limit_general} implies that
	$\psiell[\upthing{E}_i] \to 
		\psiell[\upthing{E}]$ 
	on the interior of the support of $\psiell[\upthing{E}].$
	Thus, by assertion~\eqref{ass:lem_conv_dual_convergence_integrals} of Lemma~\ref{lem:convergence_dual}, 
	$\psiell[\upthing{E}]$ 
	is a solution to problem~\eqref{eq:psi_problem}.
\end{proof}

\subsection{Interpolation between \tpdfpsi-ellipsoidal functions}

\begin{lemma}[Interpolated $\psi$-ellipsoidal function]\label{lem:psi_containment}
	Let $\psi\colon [0, \infty) \to \R \cup \{+\infty\}$  be an admissible function.
	Let $f_1$ and $f_2$ be proper log-concave functions on $\R^d$.
	Let $\upthing{E}_1=(A_1\oplus\alpha_1,a_1)$ and $\upthing{E}_2=(A_2\oplus\alpha_2,a_2)$
	be $d$-ellipsoids in $\ellipsd$ such that 
		$f_1 \leq \psiell[\upthing{E}_1]$
	and 
		$f_2 \leq \psiell[\upthing{E}_2]$.
	Let $\beta_1,\beta_2>0$ be such that $\beta_1 + \beta_2 = 1$.
	Put 
		\begin{equation}\label{eq:interpolation_def}
			A=\beta_1 A_1 + \beta_2 A_2, \quad
			\alpha=\alpha_1^{\beta_1}\alpha_2^{\beta_2}, \quad 
			a= A^{-1} (\beta_1 A_1 a_1 + \beta_2 A_2 a_2)
			\quad\text{and}\quad
			\upthing{E} = (A\oplus\alpha,a).
		\end{equation}
	Then $\psiell$ satisfies the  following inequalities:
		\begin{equation} \label{eq:psi_int_bound}
			f_1^{\beta_1}f_2^{\beta_2} \leq \psiell
		\end{equation}
	and
		\begin{equation}\label{eq:psi_interpolated_outer_volume}
			\int \psiell
				\leq 
				\left(\int_{\R^d} \psiell[{\upthing{E}_1}]\right)^{\beta_1}
				\left(\int_{\R^d} \psiell[{\upthing{E}_1}]\right)^{\beta_2},
		\end{equation}	 
	with equality in~\eqref{eq:psi_interpolated_outer_volume} if and only if $A_1=A_2.$		
\end{lemma}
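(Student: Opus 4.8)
The plan is to reduce both inequalities to pointwise inequalities on the convex function $\psi$ and to the Minkowski determinant inequality. For the containment~\eqref{eq:psi_int_bound}, I would first use identity~\eqref{eq:psi_st_form} to write, for $i\in\{1,2\}$,
\[
	\psiellf[\upthing{E}_i]{x}=\alpha_i\,e^{-\psi(\enorm{A_i(x-a_i)})},
\]
so that by~\eqref{eq:psi} the hypotheses $f_i\le\psiell[\upthing{E}_i]$ are equivalent to
$-\log f_i(x)\ge -\log\alpha_i+\psi(\enorm{A_i(x-a_i)})$. Taking the $\beta_1,\beta_2$-weighted sum and using $-\log(f_1^{\beta_1}f_2^{\beta_2})=\beta_1(-\log f_1)+\beta_2(-\log f_2)$, it suffices to prove
\[
	\psi\!\left(\enorm{A(x-a)}\right)\le \beta_1\,\psi\!\left(\enorm{A_1(x-a_1)}\right)+\beta_2\,\psi\!\left(\enorm{A_2(x-a_2)}\right)
\]
for all $x\in\R^d$. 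Here the key algebraic observation is that, by the choice of $a$ in~\eqref{eq:interpolation_def}, one has $A a=\beta_1 A_1 a_1+\beta_2 A_2 a_2$, hence
\[
	A(x-a)=\beta_1 A_1(x-a_1)+\beta_2 A_2(x-a_2),
\]
so by the triangle inequality $\enorm{A(x-a)}\le \beta_1\enorm{A_1(x-a_1)}+\beta_2\enorm{A_2(x-a_2)}$. Since $\psi$ is nondecreasing (convex with minimum at $0$, by the characterization in Subsection~\ref{subsec:adm_convex_func}) and convex, the displayed inequality for $\psi$ follows: apply monotonicity to pass from $\enorm{A(x-a)}$ to the convex combination of the two norms, then convexity of $\psi$. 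This gives~\eqref{eq:psi_int_bound}.

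For the integral estimate~\eqref{eq:psi_interpolated_outer_volume}, I would invoke formula~\eqref{eq:psi_outer_volume}, which gives $\int_{\R^d}\psiell[\upthing{E}_i]=\frac{\alpha_i}{\det A_i}\psiovolb$ and $\int_{\R^d}\psiell=\frac{\alpha}{\det A}\psiovolb$. Since $\alpha=\alpha_1^{\beta_1}\alpha_2^{\beta_2}$, inequality~\eqref{eq:psi_interpolated_outer_volume} is equivalent to
\[
	\det(\beta_1 A_1+\beta_2 A_2)\ge (\det A_1)^{\beta_1}(\det A_2)^{\beta_2},
\]
which is precisely the multiplicative Minkowski determinant inequality~\eqref{eq:minkowski_det_multipl_ineq} applied to the positive-definite matrices $A_1,A_2$ with weights $\beta_1,\beta_2$; note $A=\beta_1A_1+\beta_2A_2$ is positive-definite, so $(A\oplus\alpha,a)\in\ellipsd$ is legitimate. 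The equality condition in~\eqref{eq:minkowski_det_multipl_ineq} states that equality holds if and only if $A_1=A_2$, which transfers directly to the equality case of~\eqref{eq:psi_interpolated_outer_volume}.

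I do not expect a serious obstacle here: the statement is essentially a packaging of the triangle inequality plus monotonicity/convexity of $\psi$ for the containment, and of Minkowski's determinant inequality for the volume bound. The one point that requires a little care is the bookkeeping around the definition of $a$ — checking that $A$ is invertible (it is, being a convex combination of positive-definite matrices) so that $a=A^{-1}(\beta_1A_1a_1+\beta_2A_2a_2)$ is well-defined, and then verifying the identity $A(x-a)=\beta_1A_1(x-a_1)+\beta_2A_2(x-a_2)$, which is the linchpin of the whole argument. One should also note explicitly that $\psi$ being admissible with $\psi(0)=0$ (or at least convex with minimum at $0$) is what licenses the use of monotonicity; this is recorded in Subsection~\ref{subsec:adm_convex_func}.
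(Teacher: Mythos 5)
Your proof is correct and follows essentially the same route as the paper's: both hinge on the algebraic identity $A(x-a)=\beta_1 A_1(x-a_1)+\beta_2 A_2(x-a_2)$ and then appeal to Minkowski's determinant inequality via the formula $\int_{\R^d}\psiell=\frac{\alpha}{\det A}\,\psiovolb$. The only cosmetic difference is that the paper invokes log-concavity of the function $x\mapsto e^{-\psi(\enorm{x})}$ on $\R^d$ in a single step, whereas you unpack that same fact into its components (triangle inequality, monotonicity of $\psi$, convexity of $\psi$); these are equivalent.
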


\begin{proof}
	Since $f_1 \leq \psiell[\upthing{E}_1]$ and $f_2 \leq \psiell[\upthing{E}_2],$ we have  
		\begin{equation}\label{eq:psi_interp_lemma_f_in}
			f_1^{\beta_1}f_2^{\beta_2}
				\leq 
				\left(\psiell[\upthing{E}_1]\right)^{\beta_1}
				\left(\psiell[\upthing{E}_2]\right)^{\beta_2}.
		\end{equation}	
	Since $\psiell[\ball{d+1}]$ is log-concave and $\beta_1 A_1(x - a_1)+\beta_2 A_2(x - a_2)=A(x-a),$ inequality
	\[
		\psiellf[\ball{d+1}]{A(x-a)}
			\geq 
			\left(\psiellf[\ball{d+1}]{A_1(x - a_1)}\right)^{\beta_1}
			\left(\psiellf[\ball{d+1}]{A_2(x -a_2)}\right)^{\beta_2}
	\]
	holds for all $x \in \Red.$
	Using the identities~\eqref{eq:psi_st_form} and 
	$\alpha = \alpha_1^{\beta_1}\alpha_2^{\beta_2}$, we get 
	$\left( \psiell[{\upthing{E}_1}]\right)^{\beta_1}
		\left( \psiell[{\upthing{E}_2}]\right)^{\beta_2}
			\leq \psiell.$
	This and inequality~\eqref{eq:psi_interp_lemma_f_in} imply inequality~\eqref{eq:psi_int_bound}.

	Using~\eqref{eq:psi_outer_volume},
	we see that~\eqref{eq:psi_interpolated_outer_volume}
	is equivalent to inequality
	\[
		\frac{\psiovolb}{
			\det(\beta_1 A_1+\beta_2 A_2)}\leq 
			\frac{\psiovolb}{
			(\det A_1)^{\beta_1}(\det A_2)^{\beta_2}},
	\]
	which is equivalent to
	\[
		(\det A_1)^{\beta_1}(\det A_2)^{\beta_2} \leq 
		\det(\beta_1 A_1+\beta_2 A_2).
	\]
	That is, inequality~\eqref{eq:psi_interpolated_outer_volume} and the equality condition in it follow from Minkowski’s determinant inequality~\eqref{eq:minkowski_det_multipl_ineq}
	and the equality condition therein.
\end{proof}

Lemma~\ref{lem:psi_containment} and Corollary~\ref{cor:existence} imply
\begin{thm}\label{thm:uniqueness_upto_translation}
	Let  $\psi\colon [0, \infty) \to \R \cup \{+\infty\}$ be an admissible function 
	and let $f$ be a proper log-concave function.
	If there exists $\ell \in \funclass{\psi}$ such that $f \leq \ell$,
	then there exists a solution to problem~\eqref{eq:psi_problem}.
	Moreover, all solutions to this problem are translates of each other.
\end{thm}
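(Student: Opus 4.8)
The existence part is immediate from Corollary~\ref{cor:existence}, so the content is the ``translate'' claim. The plan is to argue by contradiction: suppose $\ell_1 = \psiell[\upthing{E}_1]$ and $\ell_2 = \psiell[\upthing{E}_2]$ are two solutions to~\eqref{eq:psi_problem}, with $\upthing{E}_i = (A_i \oplus \alpha_i, a_i)$, and suppose they are not translates of each other. Being solutions, they have the same integral, call it $\delta$, which is the infimum in~\eqref{eq:psi_problem}. First I would observe that $\ell_1, \ell_2$ being translates of each other is equivalent to $A_1 = A_2$ (and $\alpha_1 = \alpha_2$): indeed, if $A_1 = A_2$ then~\eqref{eq:psi_outer_volume} forces $\alpha_1 = \alpha_2$ from equality of integrals, and then $\ell_2(x) = \ell_1(x - (a_2 - a_1))$; conversely a translate does not change $A$ or $\alpha$. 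So the assumption ``not translates'' means precisely $A_1 \neq A_2$.

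Now apply Lemma~\ref{lem:psi_containment} with $f_1 = f_2 = f$ and $\beta_1 = \beta_2 = 1/2$. Since $f \le \ell_1$ and $f \le \ell_2$, inequality~\eqref{eq:psi_int_bound} gives $f = f^{1/2} f^{1/2} \le \psiell$ for the interpolated $d$-ellipsoid $\upthing{E}$, so $\psiell$ is a feasible competitor in~\eqref{eq:psi_problem}. On the other hand, inequality~\eqref{eq:psi_interpolated_outer_volume} gives
\[
	\int_{\R^d} \psiell \le \left(\int_{\R^d} \ell_1\right)^{1/2}\left(\int_{\R^d} \ell_2\right)^{1/2} = \delta^{1/2}\delta^{1/2} = \delta,
\]
and the equality condition in Lemma~\ref{lem:psi_containment} is ``$A_1 = A_2$''. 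Since we are assuming $A_1 \neq A_2$, the inequality is strict, so $\int_{\R^d}\psiell < \delta$, contradicting the minimality of $\delta$. Hence $A_1 = A_2$, and by the first paragraph $\ell_1$ and $\ell_2$ are translates of each other.

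I do not expect a serious obstacle here; the only mild care needed is the bookkeeping in the first step (translating back and forth between ``translates of each other'' and ``$A_1 = A_2$''), and noting that the interpolated function $\psiell$ genuinely lies in $\funclass{\psi}$, which is built into the definition of $\upthing{E} \in \ellipsd$ in~\eqref{eq:interpolation_def} (the matrix $A = \tfrac12 A_1 + \tfrac12 A_2$ is positive-definite as an average of positive-definite matrices, and $\alpha = \sqrt{\alpha_1\alpha_2} > 0$, so the hypotheses of Lemma~\ref{lem:psi_containment} are consistent and its conclusion applies). No admissibility condition beyond what Lemma~\ref{lem:psi_containment} already assumes is required, which is why this theorem holds for every admissible $\psi$, not just those satisfying the extra hypotheses of Theorem~\ref{thm:psi_lowner}.
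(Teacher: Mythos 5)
Your proposal is correct and follows essentially the same route as the paper: existence from Corollary~\ref{cor:existence}, then interpolation (Lemma~\ref{lem:psi_containment} with $\beta_1=\beta_2=1/2$) and the equality case of Minkowski's determinant inequality to force $A_1=A_2$, with $\alpha_1=\alpha_2$ then following from~\eqref{eq:psi_outer_volume}. The only cosmetic difference is that you phrase the last step as a contradiction while the paper argues directly from the chain of inequalities, but the substance is identical.
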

\begin{proof}
	By Corollary~\ref{cor:existence}, if the set 
	$\left\{\ell \in \funclass{\psi} \st f \leq \ell\right\}$
	is nonempty,
	then the minimum in~\eqref{eq:psi_problem} 
	is attained at some $\psi$-ellipsoidal function.
	Let $\upthing{E}_1=(A_1\oplus\alpha_1,a_1)$ and $\upthing{E}_2=(A_2\oplus\alpha_2,a_2)$
	be $d$-ellipsoids in $\ellipsd$
	such that $ \psiell[\upthing{E}_1]$ and $\psiell[\upthing{E}_2]$ 
	are the solutions to problem~\eqref{eq:psi_problem}.

	Set $\beta_1 =\beta_2 = 1/2$
	and let $\upthing{E}$ be given by~\eqref{eq:interpolation_def}.
	By the choice of the ellipsoids and~Lemma~\ref{lem:psi_containment},
	$\psiell$ is a $\psi$-ellipsoidal function
	such that $f \leq \psiell$  and 
	\[
		\int_{\R^d} \psiell[\upthing{E}_1]
			\leq \int_{\R^d} \psiell
			\leq \sqrt{\int_{\R^d} \psiell[\upthing{E}_1] \cdot \int_{\R^d} \psiell[\upthing{E}_2]}
			= \int_{\R^d} \psiell[{\upthing{E}_1}].
	\]
	Hence, by the equality condition in~\eqref{eq:psi_interpolated_outer_volume},
	we have $A_1=A_2$.
	Since the integrals of $\psiell[{\upthing{E}_1}]$ and $\psiell[{\upthing{E}_2}]$ are equal
	and by~\eqref{eq:psi_outer_volume}, 
	we obtain $\alpha_1=\alpha_2$.
	This completes the proof.
\end{proof}

\section{When is the solution  unique?}\label{sec:uniqueness}

Let $\ell_1$ and $\ell_2$ be two $\psi$-ellipsoidal function that are translates of each other. 
Summarizing the already proved results,
we need to be able to ``squeeze'' a $\psi$-ellipsoidal function $\ell$ between $\ell_1$ and $\ell_2$
in such a way that
$\int_{\R^d}  \ell <  \int_{\R^d}  \ell_1 =  \int_{\R^d}  \ell_2$
and $\min\{\ell_1, \ell_2\} \leq \ell$
to show the uniqueness of a solution to problem~\eqref{eq:psi_problem}. 
We are able to find such an interpolation for two classes  of admissible functions.
They are the class of strictly increasing admissible functions
and the class of admissible functions with bounded effective domain.
We will prove the following two lemmas about interpolation between two translated $\psi$-ellipsoidal functions:

\begin{lemma}\label{lem:psi_sausage_lemma_st_increasing}
	Let $\psi\colon [0, \infty) \to \R \cup \{+\infty\}$ 
	be a strictly increasing admissible function.
	Let $a_1 \neq a_2$ and let $\upthing{E}_1=(A\oplus\alpha, a_1)$
	and $\upthing{E}_2=(A\oplus\alpha,a_2)$ be elements of $\ellipsd.$
	Then there exists $\upthing{E} \in \ellipsd$
	such that
	\[
	  \int_{\R^d} \psiell  
		< \int_{\R^d} \psiell[\upthing{E}_1]
		= \int_{\R^d} \psiell[\upthing{E}_2]
		\quad  \text{and} \quad
		\min\left\{ \psiell[\upthing{E}_1], \psiell[\upthing{E}_2]\right\} \leq \psiell.
	\]
\end{lemma}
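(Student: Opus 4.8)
The plan is to construct $\upthing{E}$ so that $\psiell$ is a ``fatter but shorter'' $\psi$-ellipsoidal function whose body, in the lifted picture, is a kind of sausage-shaped region that contains the union of the two translated bodies. Since $\upthing{E}_1$ and $\upthing{E}_2$ differ only by the translation $a_1\mapsto a_2$, after applying the linear map $A$ we may assume $A=\id$ and that $\upthing{E}_1,\upthing{E}_2$ are represented by $(\id\oplus\alpha,a_1),(\id\oplus\alpha,a_2)$; the general case follows by pulling everything back through $A$, which multiplies all integrals by the same factor $1/\det A$ and does not affect the inequalities or the pointwise ordering. So it suffices to treat $\psiell[\upthing{E}_i](x)=\alpha e^{-\psi(\enorm{x-a_i})}$ with $a_1\neq a_2$. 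Writing $u=(a_2-a_1)/\enorm{a_2-a_1}$ and $\rho=\enorm{a_1-a_2}/2$, by a translation we may further assume $a_1=-\rho u$, $a_2=\rho u$, so the two centers are symmetric about the origin.

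The key geometric idea is this: the function $\min\{\psiell[\upthing{E}_1],\psiell[\upthing{E}_2]\}$ has superlevel set, at height $\alpha e^{-\psi(t)}$, equal to $(t\ball{d}+a_1)\cap(t\ball{d}+a_2)$, a lens-shaped region. I would choose $\upthing{E}=(B\oplus\alpha',0)$ with $\alpha'<\alpha$ and $B$ a suitable diagonal-type positive definite matrix (an ellipsoid elongated in the direction $u$), chosen so that for every $t$ the superlevel set $[\psiell\geq \alpha e^{-\psi(t)}]$ of the candidate contains the lens. Concretely, using that $\psi$ is strictly increasing (hence $[\psiell[\upthing{E}_i]\geq c]$ is a genuine ball of radius $\psi^{-1}(\log(\alpha/c))$ for $c\in(0,\alpha]$ in the range of $\alpha e^{-\psi}$), the containment $\min\{\psiell[\upthing{E}_1],\psiell[\upthing{E}_2]\}\leq\psiell$ is equivalent to a family of one-parameter containments of a lens inside an ellipsoid. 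I would pick $\alpha'=\alpha e^{-\psi(\varepsilon)}$ for a small $\varepsilon>0$ and let $B^{-1}$ stretch space so that the ball of radius $r$ around $a_i$ is swallowed by the image-ball of radius $r-\varepsilon$ (roughly); strict monotonicity of $\psi$ guarantees that lowering the height from $\alpha$ to $\alpha'$ buys a uniform ``radius gain'' that compensates the off-center displacement $\rho$, at least once $B$ is made long enough along $u$ and we are willing to make it slightly wider than the unit ball in the orthogonal directions only by a bounded amount. The integral inequality $\int\psiell<\int\psiell[\upthing{E}_1]$ then has to be verified by the explicit formula~\eqref{eq:psi_outer_volume}: $\int\psiell=(\alpha'/\det B)\,\psiovolb$ versus $\int\psiell[\upthing{E}_1]=\alpha\,\psiovolb$, so one needs $\alpha'/\det B<\alpha$, i.e. $\det B>\alpha'/\alpha=e^{-\psi(\varepsilon)}$, which is easy to arrange — in fact for small $\varepsilon$ the required elongation of $B$ along $u$ is of order $1$ while the height drop is only of order $e^{-\psi(\varepsilon)}\to 1$, and one checks the product works out by taking $\varepsilon$ small and balancing. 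A cleaner route is to first optimize: among all $(B\oplus\alpha',0)$ whose body contains all the lenses, parametrize by $\varepsilon$ and minimize $\alpha'/\det B$ over $\varepsilon>0$; the derivative at $\varepsilon=0$ is strictly negative precisely because $\psi$ is strictly increasing and $\rho>0$, giving a strict gain.

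The main obstacle I anticipate is making the ``lens inside ellipsoid for every height simultaneously'' condition precise and checking it uniformly in $t$, including the degenerate regimes $t\downarrow$ the smallest superlevel radius and $t\to\sup\dom\psi$: one must ensure the single ellipsoid $B\ball{d+1}$ (scaled appropriately at each height via the fixed profile $\psi$) dominates lenses of all sizes at once, which forces $B$ to be chosen from the worst-case $t$, not height-by-height. Concretely the worst case is at the largest relevant $t$ (where the two balls overlap the least relative to their size — actually the small-$t$ end, where the lens can even be empty, is harmless, and the large-$t$ end is where the displacement $\rho$ matters least), so I expect the binding constraint to come from an intermediate $t$, and identifying it is the crux. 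Once $B$ and $\alpha'$ are fixed by that worst case, the inequality $\alpha'/\det B<\alpha$ — equivalently the strict integral drop — should follow from a short convexity/monotonicity computation using that $\psi$ is strictly increasing (so that the height we may shave off is strictly positive) while $\det B$ exceeds $1$ by a controlled amount. I would organize the write-up as: (1) reduce to $A=\id$, centers $\pm\rho u$; (2) state the profile identity $[\psiell[\upthing{E}]\geq \alpha e^{-\psi(t)}]=$ ellipsoid, valid for strictly increasing $\psi$; (3) define $\varepsilon$, $\alpha'=\alpha e^{-\psi(\varepsilon)}$, and $B$ explicitly; (4) verify the lens-containment for all heights; (5) verify $\det B>e^{-\psi(\varepsilon)}$ for suitable $\varepsilon$; (6) conclude. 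The companion Lemma~\ref{lem:psi_sausage_lemma_bounded_domain} will presumably handle the bounded-domain case with a different construction precisely because there $\psi^{-1}$ is not defined on all of $(0,\alpha]$ and the balls stop growing.
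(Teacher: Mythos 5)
Your high-level plan is right: reduce the height and enlarge the body, and let the strict monotonicity of $\psi$ buy a positive gain. The reduction of $\min\{\psiell[\upthing{E}_1],\psiell[\upthing{E}_2]\}\leq\psiell$ to a family of superlevel-set inclusions is also correct, as is the reduction to $A=\id$ by the change of variable $y=Ax$. But the paper's argument diverges from yours in two ways that are worth internalizing, because they are precisely what dissolve the obstacle you flagged as ``the crux.''

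First, the paper does not elongate the body along the displacement direction at all. It takes a \emph{uniformly} scaled candidate,
\[
\upthing{E}=\bigl((1-\epsilon)A\oplus\alpha e^{-2d\epsilon},\,0\bigr),
\]
so the determinant and the height both drop by explicit factors and the integral shrinks by $e^{-2d\epsilon}/(1-\epsilon)^d<1$ for any small $\epsilon>0$ with no balancing needed. Second, and more importantly, it never touches the lens-containment picture. It passes directly to the analytic inequality on the half-space $H^{+}=\{x: \iprod{Ax}{a}\geq 0\}$ (where $a=-Aa_1$), using convexity of $\psi$ with $\psi(0)=0$ to get $\psi\bigl(\enorm{(1-\epsilon)Ax}\bigr)\leq(1-\epsilon)\psi(\enorm{Ax})$, so that the whole lemma reduces to
\[
(1-\epsilon)\psi(\enorm{Ax})+2d\,\epsilon\ \leq\ \psi(\enorm{Ax+a})\qquad\text{for all }x\in H^{+}.
\]
Now the case split is by the size of $\psi(\enorm{Ax})$, not by a geometric ``worst $t$'': if $\psi(\enorm{Ax})\geq 2d$ the left side is $\leq\psi(\enorm{Ax})\leq\psi(\enorm{Ax+a})$ automatically (monotonicity plus $\enorm{Ax+a}>\enorm{Ax}$ on $H^{+}$); if $\psi(\enorm{Ax})<2d$, the set $\{x\in H^{+}: \psi(\enorm{Ax})\leq 2d\}$ is compact (since $\psi\to\infty$), $\enorm{Ax+a}-\enorm{Ax}$ is bounded below by a positive constant there, and convexity gives $\psi(\enorm{Ax+a})-\psi(\enorm{Ax})\geq\psi(\enorm{Ax+a}-\enorm{Ax})$, which is $>0$ precisely because $\psi$ is \emph{strictly} increasing. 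The infimum $\epsilon_1$ of $\tfrac{1}{2d}\bigl[\psi(\enorm{Ax+a})-\psi(\enorm{Ax})\bigr]$ over that compact set is therefore positive, and any $\epsilon\in(0,\min\{\epsilon_1,1/2\})$ works. This is exactly the ``identify the binding constraint'' step you could not pin down: the threshold $2d$ isolates a compact region where monotonicity gives a uniform gain and outside of which nothing is needed.

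So there is a genuine gap in the proposal, not merely a different route: you never identify a concrete $B$, $\alpha'$ and verify both the containment and the strict integral drop simultaneously, and your heuristic that ``the required elongation of $B$ along $u$ is of order $1$ while the height drop $\to 1$'' actually points the wrong way (if $\det B$ stays bounded away from $1$ while $\alpha'/\alpha\to 1$, the ratio $\alpha'/(\alpha\det B)$ need not fall below $1$). As a side remark, your geometric instinct of a direction-dependent deformation is closer to the paper's proof of Lemma~\ref{lem:psi_sausage_lemma_bounded_domain}, where the matrix $M=\diag\{\tau/(\tau-\delta),1,\dots,1\}$ compresses the body along the displacement direction and the gain comes from the boundedness of $\dom\psi$ rather than from strict monotonicity.
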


\begin{lemma}\label{lem:psi_sausage_lemma_bounded_domain}
	Let  $\psi\colon [0, \infty) \to \R \cup \{+\infty\}$
	be an admissible function with bounded effective domain.
	Let $a_1 \neq a_2$ and
	let $\upthing{E}_1=(A\oplus\alpha, a_1)$
	and $\upthing{E}_2=(A\oplus\alpha,a_2)$
	be elements of $\ellipsd.$
	Then there exists 
	$\upthing{E} \in \ellipsd$ such that 
	\[
		\int_{\R^d} \psiell
		< \int_{\R^d} \psiell[\upthing{E}_1]
	 	= \int_{\R^d} \psiell[\upthing{E}_2]
	 	\quad  \text{and} \quad
		\min\left\{ \psiell[\upthing{E}_1], \psiell[\upthing{E}_2]\right\} \leq \psiell.
	\] 
\end{lemma}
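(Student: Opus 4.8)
The plan is to exploit that the effective domain of $\psi$ being bounded means every function in $\funclass{\psi}$ has \emph{bounded support}: concretely, if $\dom\psi = [0,R]$, then $\psiell[\upthing{E}_i]$ is supported on the ellipsoid $A^{-1}(R\ball{d}) + a_i$. Set $d_i := A^{-1}(R\ball{d}) + a_i$, a translate of a fixed ellipsoid $D := A^{-1}(R\ball{d})$ by $a_1$ and $a_2$. The geometric idea is: the convex hull $\conv(D + a_1, D + a_2)$ is a ``sausage'' that is \emph{not} itself a translate of $D$, but it is contained in a slightly dilated translated ellipsoid $cD + a$ with $c > 1$; and for an appropriate choice of the height $\alpha$ of the new function, the $\psi$-ellipsoidal function $\ell$ represented by $\upthing{E} = (c^{-1}A \oplus \alpha', a)$ will dominate $\min\{\ell_1,\ell_2\}$ while, because the determinant $\det(c^{-1}A) = c^{-d}\det A$ shrinks, the integral decreases provided we do not inflate $\alpha$ too much. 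So I would first make precise the inclusion $\conv(d_1, d_2) \subseteq cD + a$ for some explicit $c = c(|a_1 - a_2|, A, R) > 1$ and center $a$ (e.g. $a = \tfrac{a_1+a_2}{2}$), which is an elementary fact about John/L\"owner-type containment of a sausage in an ellipsoid.

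Next comes the comparison of functions. Normalize (by the remark in Subsection~\ref{subsec:adm_convex_func}) so that $\psi$ is nondecreasing; on $\dom\psi$ it is automatically so by convexity with minimum at $0$. For $x \in d_1 \cup d_2$, say $x \in d_1$, we need $\min\{\ell_1(x),\ell_2(x)\} \le \ell_1(x) = \alpha\, e^{-\psi(|A(x-a_1)|)}$ to be at most $\ell(x) = \alpha'\, e^{-\psi(c^{-1}|A(x-a_1)|)}$ — wait, the argument of $\ell$ is $|c^{-1}A(x - a)|$, not $|c^{-1}A(x-a_1)|$. So the real point is: for $x \in d_1$ we have $|c^{-1}A(x-a)| \le 1$ (by the containment $d_1 \subseteq cD + a$ rewritten as $|c^{-1}A(x-a)| \le R$... more precisely $c^{-1}A(x-a) \in R\ball{d}$), hence $\psi(|c^{-1}A(x-a)|) \le \psi(R) $ — this is too crude. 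Instead I would argue pointwise using monotonicity: I want $\psi(c^{-1}|A(x-a)|) \le \psi(|A(x-a_1)|)$, i.e. I need the containment to give $c^{-1}|A(x-a)| \le |A(x-a_1)|$ for $x \in d_1$, which fails near $a_1$. The honest fix, and the step I expect to be the crux, is to also \emph{lower} the height is impossible; rather one chooses $c$ and the center $a$ so that the \emph{sublevel sets match up}: for every $\alpha \le \lambda \le \min\{\alpha,\alpha\} = \alpha$, the set $[\min\{\ell_1,\ell_2\} \ge \lambda] = [\ell_1 \ge \lambda]\cap[\ell_2\ge\lambda]$, which is $\big(A^{-1}(\psi^{-1}(\ln(\alpha/\lambda))\ball{d}) + a_1\big) \cap \big(\cdots + a_2\big)$, is contained in $[\ell \ge \lambda]$. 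Since each sublevel set of $\ell_i$ is a translate of the same ball-preimage, their intersection is contained in a translate of it dilated by a factor $<1$ actually — no: the intersection of two equal ellipsoids shifted apart is contained in each, so it is contained in a translate of the ellipsoid by the midpoint, with the \emph{same} radius, hence in $A^{-1}(r\ball{d}) + \tfrac{a_1+a_2}{2}$ with the same $r$. That is the key observation: intersections shrink, so $[\min\{\ell_1,\ell_2\}\ge\lambda] \subseteq A^{-1}(\psi^{-1}(\ln(\alpha/\lambda))\ball{d}) + \bar a$ with $\bar a = \tfrac{a_1+a_2}{2}$, for \emph{every} $\lambda$. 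But that only recovers a function with the same $A$ and same $\alpha$ centered at $\bar a$ — no volume gain. So one must additionally use that the intersection is \emph{strictly} inside, by a gap depending on $|a_1-a_2|$: quantify that $[\ell_1\ge\lambda]\cap[\ell_2\ge\lambda]$ is contained in $A^{-1}(\sqrt{r^2 - \tau}\,\ball{d}) + \bar a$ for $\tau = \tau(|A(a_1-a_2)|) > 0$ independent of $\lambda$, by the Pythagorean chord computation. This shrinking-by-a-fixed-deficit is the whole game.

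With that deficit in hand, I would choose the new $d$-ellipsoid as follows: pick $\alpha' $ slightly larger than $\alpha$ and $A' \succ$ (slightly larger than $A$, i.e. smaller ellipsoid preimage... ) so that the new sublevel sets $[\ell \ge \lambda] = (A')^{-1}(\psi^{-1}(\ln(\alpha'/\lambda))\ball{d}) + \bar a$ sandwich all the intersections: need $\psi^{-1}(\ln(\alpha'/\lambda)) \cdot (A')^{-1}\ball{d} \supseteq \sqrt{\psi^{-1}(\ln(\alpha/\lambda))^2 - \tau}\cdot A^{-1}\ball{d} + (\text{shift }\bar a - \bar a=0)$, which holds for all $\lambda\in(0,\alpha]$ provided $(A')^{-1} \succeq \theta A^{-1}$ with $\theta$ close enough to $1$ and $\alpha'/\alpha$ close enough to $1$ — a continuity/compactness argument over $\lambda$, using that $\psi^{-1}$ is continuous and finite on its range and that the deficit $\tau>0$ is uniform. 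Finally, by~\eqref{eq:psi_outer_volume}, $\int\ell = \tfrac{\alpha'}{\det A'}\psiovolb$, and since $\det A' > \det A$ while $\alpha'/\alpha \to 1$, one can keep the ratio $\alpha'/\det A' < \alpha/\det A$, giving the strict integral inequality. The main obstacle, as indicated, is extracting the \emph{uniform in $\lambda$} deficit $\tau$ and then running the sandwich argument cleanly; the bounded-domain hypothesis is exactly what makes $\psi^{-1}$ well-behaved (finite, continuous, with compact range of relevant arguments) so that this uniformity goes through — in contrast to the strictly-increasing unbounded case handled by Lemma~\ref{lem:psi_sausage_lemma_st_increasing}, where a different (height-based rather than support-based) squeeze is needed.
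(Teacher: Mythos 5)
Your underlying geometric idea is correct and close to the paper's, but your execution has two real problems that leave the proof unfinished. The key observation you eventually land on — that for every level $\lambda$, the intersection $[\ell_1\ge\lambda]\cap[\ell_2\ge\lambda]$ of two translates of the same ellipsoid is contained in an ellipsoid shrunk by a \emph{multiplicative factor bounded away from $1$ uniformly in $\lambda$}, precisely because the radius $\psi^{-1}(\ln(\alpha/\lambda))$ never exceeds the fixed bound $\tau$ (the endpoint of $\dom\psi$) — is exactly the role the bounded-domain hypothesis plays. That is the right crux. The issues are:

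\textbf{First, the unnecessary introduction of $\alpha'>\alpha$.} Increasing the height buys you nothing and creates a spurious difficulty: you then have to verify that $\alpha'/\det A' < \alpha/\det A$, which you wave at (``while $\alpha'/\alpha\to 1$'') but do not establish. You simply do not need it. Keep $\alpha'=\alpha$ and the center at $\bar a=\tfrac{a_1+a_2}{2}$. Writing $b_i = A a_i$, $\bar b = \tfrac{b_1+b_2}{2}$, and $\delta=\tfrac12|A(a_1-a_2)|$, the Pythagorean computation gives, for every $r\ge\delta$, the inclusion $(r\ball{d}+b_1)\cap(r\ball{d}+b_2)\subseteq\sqrt{r^2-\delta^2}\,\ball{d}+\bar b$; since $r\le\tau$ always, one has $\sqrt{r^2-\delta^2}\le c^{-1}r$ for the \emph{fixed} $c=\tau/\sqrt{\tau^2-\delta^2}>1$ (in the case $\delta<\tau$; if $\delta\ge\tau$ the supports are disjoint and any $c>1$ works). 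Then $\upthing{E}=(cA\oplus\alpha,\bar a)$ does the job: the integral drops by the factor $c^{-d}<1$ outright. Your proposal never pins down such a concrete $\upthing{E}$, which is the step you flag as the ``crux'' and then leave unverified.

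\textbf{Second, the false starts.} The long first paragraph (trying to \emph{enlarge} the support by containing the sausage in a dilated ellipsoid) goes in the wrong direction and is abandoned mid-sentence; this is not a mathematical error, since you catch it, but it is not a proof. Together with the internally inconsistent description of $A'$ (``$A'\succ$ slightly larger than $A$'' versus ``$(A')^{-1}\succeq\theta A^{-1}$ with $\theta$ close to $1$''), a reader cannot extract a complete argument.

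For comparison, the paper's proof keeps the same center (normalized to $0$) and the same height $\alpha$, exactly as the cleaned-up version of your plan, but replaces $A$ by $MA$ where $M=\diag\{\tau/(\tau-\delta),1,\dots,1\}$ — an \emph{anisotropic} stretch only in the direction of $a=-Aa_1$. It then verifies the pointwise inequality $|MAx|\le|Ax+a|$ on $H^{+}\cap\supp\psiell[\upthing{E}_1]$ by the same orthogonal decomposition $y=\lambda a+z$, using $|Ax+a|\le\tau$ from boundedness. Thus the paper and your plan exploit the same geometric fact (a chord-of-intersection estimate) and the same role of boundedness; the paper is anisotropic and pointwise, yours is isotropic and superlevel-set-based. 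Either works, but yours needs the $\alpha'$ detour removed and the explicit constant $c$ supplied to become a proof.
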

We use two different ideas of how to construct a $\psi$-ellipsoidal function $\psiell$ in these lemmas.

Before we proceed with the proofs of Lemmas~\ref{lem:psi_sausage_lemma_st_increasing} and~\ref{lem:psi_sausage_lemma_bounded_domain},
let us show how they imply the uniqueness result.
\begin{thm}\label{thm:lowner_uniqueness_general}
	Let  $\psi\colon [0, \infty) \to \R \cup \{+\infty\}$ be an admissible function.
	Additionally,
	let $\psi$ be a strictly increasing function
	or let the effective domain of $\psi$ be bounded. 
	Let $f\colon \R^d \to [0, \infty)$ be a proper log-concave function.
	If there exists $\ell \in \funclass{\psi}$ such that $f \leq \ell$,
	then there exists a unique solution to problem~\eqref{eq:psi_problem}.
\end{thm}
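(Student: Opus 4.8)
The plan is to combine the structural results already established with the two interpolation lemmas stated just above. First I would invoke Theorem~\ref{thm:uniqueness_upto_translation}: since there exists $\ell \in \funclass{\psi}$ with $f \leq \ell$, a solution to problem~\eqref{eq:psi_problem} exists, and any two solutions are translates of each other. So it remains only to rule out two distinct solutions that differ by a translation.

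Suppose for contradiction that $\psiell[\upthing{E}_1]$ and $\psiell[\upthing{E}_2]$ are both solutions, with $\upthing{E}_1=(A\oplus\alpha,a_1)$ and $\upthing{E}_2=(A\oplus\alpha,a_2)$ (same $A$ and $\alpha$ by Theorem~\ref{thm:uniqueness_upto_translation}) and $a_1 \neq a_2$. Both have the same integral, call it $I$, and both lie above $f$. Now I would apply whichever of Lemma~\ref{lem:psi_sausage_lemma_st_increasing} (if $\psi$ is strictly increasing) or Lemma~\ref{lem:psi_sausage_lemma_bounded_domain} (if $\dom\psi$ is bounded) is available: each produces a $\psi$-ellipsoidal function $\psiell$ with $\int_{\R^d}\psiell < I$ and $\min\{\psiell[\upthing{E}_1],\psiell[\upthing{E}_2]\} \leq \psiell$. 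Since $f \leq \psiell[\upthing{E}_1]$ and $f \leq \psiell[\upthing{E}_2]$ pointwise, we get $f \leq \min\{\psiell[\upthing{E}_1],\psiell[\upthing{E}_2]\} \leq \psiell$, so $\psiell$ is feasible for~\eqref{eq:psi_problem} with strictly smaller integral than the supposed minimum — a contradiction. Hence $a_1 = a_2$, the two solutions coincide, and the solution is unique.

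Since the theorem's hypothesis is exactly the disjunction ``$\psi$ strictly increasing'' or ``$\dom\psi$ bounded,'' the argument splits into these two cases and in each case the relevant sausage lemma applies verbatim, so no further work is needed. The main obstacle is not in this deduction at all — it is entirely deferred to the proofs of the two interpolation lemmas, which must exhibit the ``squeezed'' ellipsoid $\upthing{E}$ explicitly; the present theorem is just the clean packaging of Theorem~\ref{thm:uniqueness_upto_translation} together with those lemmas.

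\begin{proof}
	By Theorem~\ref{thm:uniqueness_upto_translation}, the assumption that there is $\ell \in \funclass{\psi}$ with $f \leq \ell$ guarantees that problem~\eqref{eq:psi_problem} has a solution and that any two solutions are translates of each other. Suppose, for contradiction, that $\psiell[\upthing{E}_1]$ and $\psiell[\upthing{E}_2]$ are two distinct solutions. By Theorem~\ref{thm:uniqueness_upto_translation} we may write $\upthing{E}_1 = (A \oplus \alpha, a_1)$ and $\upthing{E}_2 = (A \oplus \alpha, a_2)$ with the same $A$ and $\alpha$ and with $a_1 \neq a_2$; in particular
	\[
		\int_{\R^d} \psiell[\upthing{E}_1] = \int_{\R^d} \psiell[\upthing{E}_2].
	\]
	If $\psi$ is strictly increasing, apply Lemma~\ref{lem:psi_sausage_lemma_st_increasing}; if the effective domain of $\psi$ is bounded, apply Lemma~\ref{lem:psi_sausage_lemma_bounded_domain}. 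In either case we obtain $\upthing{E} \in \ellipsd$ with
	\[
		\int_{\R^d} \psiell < \int_{\R^d} \psiell[\upthing{E}_1] = \int_{\R^d} \psiell[\upthing{E}_2]
		\quad\text{and}\quad
		\min\left\{\psiell[\upthing{E}_1], \psiell[\upthing{E}_2]\right\} \leq \psiell.
	\]
	Since $f \leq \psiell[\upthing{E}_1]$ and $f \leq \psiell[\upthing{E}_2]$ pointwise, we have $f \leq \min\{\psiell[\upthing{E}_1], \psiell[\upthing{E}_2]\} \leq \psiell$, so $\psiell$ is a feasible function for problem~\eqref{eq:psi_problem} whose integral is strictly smaller than that of the solution $\psiell[\upthing{E}_1]$. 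This contradicts the minimality of $\psiell[\upthing{E}_1]$. Therefore $a_1 = a_2$, so $\upthing{E}_1 = \upthing{E}_2$, and the solution to problem~\eqref{eq:psi_problem} is unique.
\end{proof}
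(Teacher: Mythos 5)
Your proof is correct and follows essentially the same approach as the paper: invoke Theorem~\ref{thm:uniqueness_upto_translation} for existence and the translate property, then use the appropriate sausage lemma (Lemma~\ref{lem:psi_sausage_lemma_st_increasing} or Lemma~\ref{lem:psi_sausage_lemma_bounded_domain}) to produce a strictly smaller feasible $\psi$-ellipsoidal function, contradicting minimality. Your writeup is slightly more explicit about the parametrization of the two translated solutions and the feasibility check $f \leq \min\{\ell_1,\ell_2\} \leq \psiell$, but the logical skeleton is identical to the paper's.
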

\begin{proof}
	Assume there exists $\ell \in \funclass{\psi}$ such that $f \leq \ell.$
	By Corollary~\ref{cor:existence}, the solution to problem~\eqref{eq:psi_problem} exists.
	Assume that there are at least two different solutions $\ell_1$ and $\ell_2.$
	By construction, we have that
	$f \leq \min\{\ell_1, \ell_2\}.$
	This means that a $\psi$-ellipsoidal function
	$\ell$ such that $\min\{\ell_1, \ell_2\} \leq \ell$
	also satisfies inequality $f \leq \ell.$
	By Theorem~\ref{thm:uniqueness_upto_translation}, 
	the $\psi$-ellipsoidal functions $\ell_1$ and $\ell_2$ are translates of each other.
	Hence, by Lemma~\ref{lem:psi_sausage_lemma_st_increasing} (in case $\psi$ is strictly increasing)
	and by Lemma~\ref{lem:psi_sausage_lemma_bounded_domain} (in case $\psi$ has bounded effective domain),
	$\ell_1$ and $\ell_2$ are not solutions to problem~\eqref{eq:psi_problem}.
	This contradicts the choice of $\ell_1$ and $\ell_2.$
\end{proof}

\subsection{Proofs of Lemma~\ref{lem:psi_sausage_lemma_st_increasing} and Lemma~\ref{lem:psi_sausage_lemma_bounded_domain}}

For the sake of simplicity,  we assume that $\psi(0)=0.$
Without loss of generality, we assume that $a_1$ and $a_2$
are the opposite vectors, that is, $a_2= -a_1 \neq 0.$

Set $a = - A a_1.$
Consider the half-space 
	$H^{+} = \{x \in \R^d \st \iprod{A x}{ a} \geq 0\}.$ 
Clearly, we have	
\begin{equation}\label{eq:sausage_two_func}
	\psiellf[\upthing{E}_1]{x}
		\leq \psiellf[\upthing{E}_2]{x}
		\quad\text{for all}
		\ x \in H^{+}.
\end{equation}

\begin{proof}[Proof of Lemma~\ref{lem:psi_sausage_lemma_st_increasing}]
	The idea of our construction is as follows.
	We consider $\psi$-ellipsoidal functions $\psiell,$
	where $\upthing{E}$ is of the form  
	$\left( \rho_1 A \oplus \rho_2 \alpha, 0\right).$ 
	Next, we find suitable $\rho_1, \rho_2 \in (0,1).$ 
	That is, we decrease the height {\bf and} the determinant of the operator. 

	First, we construct $\psi$-ellipsoidal functions which satisfy inequality
	$\min\left\{\psiell[\upthing{E}_1],  \psiell[\upthing{E}_2]\right\}\leq \psiell.$
	\begin{claim}
		Set
		\begin{equation}\label{eq:sausage_eps_def}
			\epsilon_1 =  \frac{1}{2d}{\inf
				\left\{\psi(\enorm{Ax+a})-
			\psi\!\left(\enorm{Ax}\right) \right\}},
		\end{equation}
		where the infimum is taken over $x \in H^+$
		satisfying $\psi\!\left(\enorm{Ax}\right) \leq 2d .$
		Then $\epsilon_1 > 0$ and for any 
		$\epsilon \in [0, \min\{\epsilon_1 , 1\}),$ 
		the $\psi$-ellipsoidal function $\psiell$ with 
		\begin{equation}
		\label{eq:sausage_E_def}
			\upthing{E}=\left((1-\epsilon)A\oplus\alpha 
			e^{-2d \cdot \epsilon}, 0\right),
		\end{equation}
		satisfies inequality $\min\left\{\psiell[\upthing{E}_1], \psiell[\upthing{E}_2]\right\} \leq \psiell.$
	\end{claim}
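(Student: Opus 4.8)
The claim asserts that after shrinking both the linear map (by a factor $1-\epsilon$) and the height (by a factor $e^{-2d\epsilon}$), the resulting $\psi$-ellipsoidal function $\psiell$ centered at the origin dominates $\min\{\psiell[\upthing{E}_1],\psiell[\upthing{E}_2]\}$, provided $\epsilon_1>0$ and $\epsilon<\min\{\epsilon_1,1\}$. Because of the symmetric roles of $\upthing{E}_1$ and $\upthing{E}_2$ under the reflection $x\mapsto -x$ (recall $a_2=-a_1$ and $a=-Aa_1$), and because of~\eqref{eq:sausage_two_func}, it suffices to prove $\psiell[\upthing{E}_1](x)\le\psiellf{x}$ for all $x\in H^+$; the inequality on $H^-$ then follows by applying the symmetry. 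Rewriting both sides using~\eqref{eq:psi_st_form} and $\psi(0)=0$, and taking negative logarithms via~\eqref{eq:psi}, the desired inequality on $H^+$ becomes
\[
	\psi\!\left(\enorm{(1-\epsilon)Ax}\right) + 2d\epsilon \le \psi\!\left(\enorm{A(x-a_1)}\right) = \psi\!\left(\enorm{Ax + a}\right)
	\qquad\text{for all } x\in H^+.
\]

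**Main steps.** First I would establish that $\epsilon_1>0$. The infimum in~\eqref{eq:sausage_eps_def} is taken over the set $S=\{x\in H^+ : \psi(\enorm{Ax})\le 2d\}$, which is compact (it is closed since $\psi$ is lower semi-continuous and bounded since $\psi(t)\to\infty$), and on which the integrand $x\mapsto \psi(\enorm{Ax+a})-\psi(\enorm{Ax})$ is continuous on the interior of the relevant effective domain and lower semi-continuous overall; moreover for every $x\in H^+$ we have $\enorm{Ax+a}\ge\enorm{Ax}$ because $\iprod{Ax}{a}\ge 0$, so by monotonicity of $\psi$ the integrand is strictly positive whenever $\enorm{Ax+a}>\enorm{Ax}$, i.e. whenever $x\ne 0$ or $a\ne 0$; since $a\ne 0$ the integrand is bounded below by a positive constant on the compact set $S$. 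Hence $\epsilon_1>0$. Second, I would verify the displayed inequality by splitting into two cases according to whether $x\in S$ or not. If $\psi(\enorm{Ax})\le 2d$, then since $(1-\epsilon)\enorm{Ax}\le\enorm{Ax}$ and $\psi$ is nondecreasing, $\psi((1-\epsilon)\enorm{Ax})\le\psi(\enorm{Ax})$, and by definition of $\epsilon_1$ and $\epsilon\le\epsilon_1$ we get $\psi(\enorm{Ax})+2d\epsilon\le\psi(\enorm{Ax+a})$, which chains to give the claim. If instead $\psi(\enorm{Ax})>2d$, I would use convexity of $\psi$ together with $\psi(0)=0$: the inequality $\psi((1-\epsilon)t)\le(1-\epsilon)\psi(t)$ holds for all $t\ge0$, so $\psi((1-\epsilon)\enorm{Ax})\le\psi(\enorm{Ax})-\epsilon\psi(\enorm{Ax})\le\psi(\enorm{Ax})-2d\epsilon\le\psi(\enorm{Ax+a})-2d\epsilon$, using again $\enorm{Ax+a}\ge\enorm{Ax}$ on $H^+$; rearranging gives the claim.

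**The obstacle.** The routine verifications are the two-case estimate above; the only genuinely delicate point is the positivity of $\epsilon_1$, specifically the step where one needs the integrand in~\eqref{eq:sausage_eps_def} to be bounded \emph{away from zero} on $S$, not merely positive. The subtlety is that $S$ touches the boundary of $\dom\psi$ where $\psi$ may jump to $+\infty$; there the integrand could a priori be an indeterminate $\infty-\infty$. I would handle this by noting that on $S$ one has $\psi(\enorm{Ax})\le 2d<\infty$, so $\enorm{Ax}$ stays in $\dom\psi$, and then $\enorm{Ax+a}$ either also lies in $\dom\psi$—in which case continuity on a compact set and strict positivity give a positive lower bound—or $\psi(\enorm{Ax+a})=+\infty$, in which case the integrand is $+\infty$ and causes no harm to the infimum. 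A brief compactness/lower-semicontinuity argument, perhaps passing to a minimizing sequence and using that $\enorm{Ax}$ and $\enorm{Ax+a}$ converge along a subsequence, closes this gap cleanly. Once $\epsilon_1>0$ is secured, the rest is the elementary convexity manipulation sketched above.
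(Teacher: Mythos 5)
Your proof is correct and follows essentially the same overall structure as the paper's: reduce to the inequality $\psi((1-\epsilon)\enorm{Ax})+2d\epsilon\le\psi(\enorm{Ax+a})$ on $H^+$, split into cases by the size of $\psi(\enorm{Ax})$, and handle the positivity of $\epsilon_1$ by a compactness argument. The one place where you and the paper genuinely diverge is the proof that $\epsilon_1>0$. You work directly with the two-term expression $\psi(\enorm{Ax+a})-\psi(\enorm{Ax})$ and invoke lower semi-continuity plus compactness; you correctly flag that this is the delicate point (a difference of an lsc and a usc function need not be lsc), and the resolution requires observing that on $S=\{x\in H^+:\psi(\enorm{Ax})\le 2d\}$ the map $x\mapsto\psi(\enorm{Ax})$ is actually \emph{continuous}, because $\enorm{Ax}$ then lies in $\dom\psi$ and a convex lsc function is continuous there (including left-continuity at the right endpoint of $\dom\psi$). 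The paper avoids this subtlety altogether with a convexity trick: since $\psi$ is convex with $\psi(0)=0$, one has $\psi(\tau)\le\psi(\tau+t)-\psi(t)$ for $\tau,t\ge 0$, so $\psi\bigl(\enorm{Ax+a}-\enorm{Ax}\bigr)\le\psi(\enorm{Ax+a})-\psi(\enorm{Ax})$; then it suffices to bound the purely metric quantity $\enorm{Ax+a}-\enorm{Ax}$ away from zero on the compact set and apply strict monotonicity of $\psi$ once. Both routes are valid; the paper's is cleaner because it converts the difference into a single evaluation of $\psi$, while yours is more direct but needs the extra continuity observation to be airtight.

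For the main inequality your two-case split (by whether $\psi(\enorm{Ax})\le 2d$) matches the paper's logic, merging the paper's separate case $\psi(\enorm{Ax})=+\infty$ into the case $>2d$; for $\psi(\enorm{Ax})=+\infty$ the manipulation $\psi(\enorm{Ax})-\epsilon\psi(\enorm{Ax})$ should be replaced by the trivial observation that then $\psi(\enorm{Ax+a})=+\infty$ as well. In the case $\psi(\enorm{Ax})\le 2d$, your chain $\psi((1-\epsilon)\enorm{Ax})\le\psi(\enorm{Ax})$ followed by $\psi(\enorm{Ax})+2d\epsilon\le\psi(\enorm{Ax+a})$ (by the definition of $\epsilon_1$) is a mild simplification of the paper's argument, which passes through an auxiliary quantity $\epsilon_0\ge\epsilon_1$; your version proves a slightly stronger intermediate inequality and is fine.
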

	\begin{proof}
		Let us show that $\epsilon_1 > 0.$
		Since $\psi$ is a convex function, for any $\tau, t>0$ we have 
		\[
			\psi(\tau) = \psi(\tau) - \psi(0) \leq \psi(\tau +t) - \psi(t).
		\]
		Hence, 
		$ \psi\!\left(\enorm{Ax +a} - \enorm{Ax}\right)
			\leq \psi(\enorm{Ax+a})-\psi\!\left(\enorm{Ax}\right).$
		Since $\psi$ is a convex function and since
		$\lim\limits_{t \to \infty} \psi(t) = \infty,$
		the set 
		$\left\{x \in \R^d \st \psi\!\left(\enorm{Ax}\right)
			\leq 2d\right\}$
		is bounded. Clearly, $\enorm{Ax +a} - \enorm{Ax}$ is bounded from below by some positive constant
		on the bounded set $H^{+} \cap \{x \st \psi\!\left(\left|Ax\right|\right) \leq  2d\}.$
		Since $\psi$ is strictly increasing,
		we conclude that $\epsilon_1 > 0.$ 

		Let us check inequality  
		$\min\left\{ \psiell[\upthing{E}_1],\psiell[\upthing{E}_2]\right\}
			\leq \psiell.$
		Using~\eqref{eq:psi} and~\eqref{eq:sausage_two_func}, we see that it suffices to prove that
			\begin{equation*}
				\psi\left(\enorm{(1-\epsilon)Ax}\right)+ 2d \cdot \epsilon
					\leq \psi(\enorm{Ax+a})
					\quad \text{for all}
					\ x \in H^{+}.
			\end{equation*}
		Since $\psi$ is a convex function, for any $\epsilon \in [0,1)$, we have
			\begin{equation*}
				\psi\left(|(1-\epsilon)Ax|\right)
				\leq (1-\epsilon)\psi(\enorm{Ax})+\epsilon \psi(0)= (1-\epsilon)\psi(\enorm{Ax}).
			\end{equation*}
		Therefore, it suffices to show that
			\begin{equation}\label{eq:psi_sausage_lem}
				(1-\epsilon)\psi(\enorm{Ax})+ 2d \cdot \epsilon \leq \psi(\enorm{Ax+a})
				\quad\text{for all}
				\ x \in H^{+}.
			\end{equation}
		Consider three cases.
		First, if $\psi(\enorm{Ax}) = + \infty$ for $x\in H^+$, then $\psi(\enorm{Ax+a}) = + \infty$.
		Therefore, inequality~\eqref{eq:psi_sausage_lem} trivially holds in this case.
		Second, since $\enorm{Ax+a} > \enorm{Ax}$ for any $x\in H^+$, 
		inequality~\eqref{eq:psi_sausage_lem} holds for any non-negative~$\epsilon$
		and any $x$ in $\{y \in H^+ \st \psi\!\left(\left|Ay\right|\right) \geq 2d\}.$	
		Third, consider $x\in \{y \in H^+ \st \psi\!\left(\left|Ay\right|\right) \leq 2d\}$.
	 	Then inequality~\eqref{eq:psi_sausage_lem} takes the form
			\begin{equation}\label{eq:psi_sausage_lemma_2}
				\epsilon(2d - \psi(\enorm{Ax}))\leq 
				\psi\!\left(\enorm{Ax+a}\right)-\psi\!\left(\enorm{Ax}\right).
			\end{equation}
		Set 
		\[
			\epsilon_0 = \inf\limits_{x \in H^{+} \st\, \psi\,\!\left(\enorm{Ax}\right) \leq 2d}\left\{\frac{\psi(\enorm{Ax+a})-
			\psi\!\left(\enorm{Ax}\right)}{2d - \psi\!\left(\enorm{Ax}\right)}\right\}.
		\]
		We see that inequality~\eqref{eq:psi_sausage_lemma_2} holds
		for any  $\epsilon \in [0, \epsilon_0].$
		Hence, inequality~\eqref{eq:psi_sausage_lem} holds
		for any $\epsilon\in [0,\min\{1,\epsilon_0\}]$ and any $x\in H^+$.
		Clearly, $\epsilon_0  \geq \epsilon_1.$
		By symmetry, $\epsilon_1$ given by~\eqref{eq:sausage_eps_def} satisfies the required property.
	\end{proof}

	Using~\eqref{eq:psi_outer_volume} with $\upthing{E}$ given by~\eqref{eq:sausage_E_def}, we obtain 
	\[
		\int_{\R^d} \psiell = 
		\frac{ e^{- 2d \cdot \epsilon}}{(1-\epsilon)^d} \cdot \int_{\R^d} \psiell[\upthing{E}_1].
	\]
	However, since 
	$\frac{ e^{- 2d \cdot \epsilon}}{(1-\epsilon)^d} < 1$ for any $\epsilon \in (0,1/2],$
	we conclude that  $\int_{\R^d} \psiell < \int_{\R^d} \psiell[\upthing{E}_1]$
	for a sufficiently small positive $\epsilon.$ 
	This completes the proof of Lemma~\ref{lem:psi_sausage_lemma_st_increasing}.
\end{proof}
\begin{proof}[Proof of Lemma~\ref{lem:psi_sausage_lemma_bounded_domain}]
	Here we use the same idea as in the setting of convex sets, that is, 
	if the set $\left(\ball{d} + a_1\right) \cap  \left(\ball{d} + a_2\right)$
	with $a_1 \neq a_2$ is nonempty,
	then it is a subset of  $\rho \ball{d} + \frac{a_1 + a_2}{2}$ for some $\rho \in (0,1).$
	We apply this observation for the superlevel sets of $\psi$-ellipsoidal functions and
	consider $\psi$-ellipsoidal functions $\psiell,$
	where $\upthing{E}$ is of the form $\left(A_1 \oplus \alpha, 0\right)$ with $A \prec A_1.$
	The boundedness of the effective domain allows us to show that there exists a suitable $A_1.$

	Denote $\delta = |a|.$ 
	Let $[0,\tau]$ be the closure of $\dom\psi$.
	We have that $\supp \psiell[\upthing{E}_1] =\{x\in\R^d \st\enorm{Ax +a} \leq \tau\}.$
	Hence, the interiors of supports of $\psiell[\upthing{E}_1]$ and 
	$\psiell[\upthing{E}_2]$ do not intersect if $\delta \geq \tau.$
	Thus, any $\upthing{E}$ of the form  
	$\left( \rho A \oplus \alpha, 0\right)$ with any $\rho > 1$ suits us in this case.

	Assume that $\delta < \tau.$
	Choose an orthonormal basis of $\R^d$
	such that the first vector of the basis is in the direction of the vector $a$.
	We claim that the function $\psiell$ with
	\[
		\upthing{E} =
			(MA\oplus\alpha,0),
			\quad\text{where}\quad
			M=\diag\left\{\frac{\tau}{\tau-\delta},1,\dots,1\right\},
	\]
	satisfies the condition of the lemma.

	The bound on the integral of $\psiell$ trivially holds. 

	Let us check inequality $\min\left\{ \psiell[\upthing{E}_1], \psiell[\upthing{E}_2]\right\}\leq \psiell.$
	By~\eqref{eq:psi} and~\eqref{eq:sausage_two_func}, it suffices to prove that
		\begin{equation}\label{eq:bounded_domain_equiv}
			\psi\left(|MAx|\right)
			\leq \psi(\enorm{Ax+a})
			\quad\text{for all}\ 
			x \in H^{+}\cap\supp{\psiell[\upthing{E}_1]}.
		\end{equation}
	Since an admissible function is increasing on its domain,
	inequality~\eqref{eq:bounded_domain_equiv} follows from the following:
		\begin{equation}\label{eq:bounded_domain_1}
			\enorm{MAx} 	
				\leq \enorm{Ax+a}
				\quad \text{for all}
				\ x \in H^{+}\cap\{v \in \R^d \st \enorm{Av+a} \leq \tau\}.
		\end{equation}
	Indeed, denote $y=Ax$ and let $y=\lambda a+z,$ 
	where $z$ is orthogonal to $a.$ 
	We have 
	\[
		\enorm{My}^2=\enorm{M(\lambda a + z)}^2 
			= \enorm{\frac{\tau}{\tau - \delta}\lambda a+z}^2
			= \left(\frac{\tau}{\tau - \delta}\right)^2 \lambda^2 \delta^2 + \enorm{z}^2
	\]			
	and $\enorm{y+a}^2=(1+\lambda)^2\delta^2 + \enorm{z}^2$. 
	Since  $\lambda\geq 0$ for  $x\in H^+,$  inequality~\eqref{eq:bounded_domain_1} is equivalent to
		$$\frac{\tau}{\tau - \delta} \lambda
			\leq 1 + \lambda,$$
	which trivially holds since $(1+\lambda)\delta \leq \enorm{y + a} = \enorm{Ax + a} \leq \tau.$
	Thus, inequality~\eqref{eq:bounded_domain_equiv} holds.
	We conclude that $\min\left\{ \psiell[\upthing{E}_1], \psiell[\upthing{E}_2]\right\}\leq \psiell,$
	completing the proof of Lemma~\ref{lem:psi_sausage_lemma_bounded_domain}. 
\end{proof}

\subsection{Chimeras}\label{subsec:chimeras}

Let $\psi\colon [0, \infty) \to \R $
be an admissible function such that
it is constant on some interval $[0,\tau], \tau > 0,$
and it takes only finite values.
We claim that,
given such an admissible function and a log-concave function $f$,
a solution to problem~\eqref{eq:psi_problem} is not necessarily unique,
which we show in the following example.
For simplicity, we assume that $\psi(0)=0.$
\begin{exl}
	Let $f\colon \R^d \to [0,\infty)$ be given by 
	$f = \min\left\{\psiell[(\id \oplus 1, -c)],\psiell[(\id \oplus 1, c)] \right\},$
	where $|c| = \tau.$ Then the functions of the form
	$\psiell[(\id \oplus 1,\rho c)]$ with $\enorm{\rho} \leq 1$
	are solutions to problem~\eqref{eq:psi_problem}.
\end{exl}
\begin{proof}
	By monotonicity of the Euclidean norm, we see that  any function of the form
	$\psiell[(\id \oplus 1,\rho c)]$ with $|\rho| \leq 1$  satisfies inequality
	$f \leq \psiell[(\id \oplus 1,\rho c)].$
	Thus, we need to show that these functions are of minimal integral.

	Let $(A \oplus \alpha, a) \in \ellipsd$ be such that 
	$f \leq \psiell[(A \oplus \alpha, a)].$
	Since $f(0) = 1,$ we conclude that $\alpha \geq 1.$
	By this and by~\eqref{eq:psi_outer_volume}, it suffices to show that $\norm{A} \leq 1.$
	Assume the contrary: There is an eigenvalue~$\lambda$ of~$A$ such that $\lambda > 1.$
	Denote by~$u$ a unit eigenvector corresponding to $\lambda$ such that $\iprod{u}{c} \geq 0.$
	Then, for all  $\xi > 0,$ the following inequality holds:
	\[
		\alpha e^{-\psi(|\lambda \xi u - A a|)} \geq e^{-\psi(|\xi u + c|)},
	\] 
	or, equivalently,
	\begin{equation}\label{eq:ploskii_modulus_exl}
		\psi(\enorm{\xi u + c})  -
		\psi(\enorm{\lambda \xi u  - A a}) \geq - \ln  \alpha.
	\end{equation}
	By convexity of $\psi,$  the left-hand side in~\eqref{eq:ploskii_modulus_exl} is at most
	$-\psi(\enorm{\lambda \xi u - A a} - \enorm{\xi u + c}).$ 
	Since $\lambda >1,$ the argument of $\psi$ tends to infinity as $\xi$ tends to infinity. 
	Thus, the left-hand side in~\eqref{eq:ploskii_modulus_exl} is strictly less than
	$-\ln \alpha$
	for a sufficiently large $\xi.$ We obtain a contradiction.
\end{proof}

\section{When does the solution exist?}\label{sec:existence}

In this Section,
we address the question of existence of a solution to problem~\eqref{eq:psi_problem}. 
By Corollary~\ref{cor:existence},
it suffices to find one $\psi$-ellipsoidal function $\ell$ such that $f \leq \ell.$
This is a simple technical question.
\begin{exl}
	Let  $\psi\colon [0, \infty) \to \R \cup \{+\infty\}$  be  an admissible function  which is not of  linear growth.
	Then for any $d \in \N,$ the set
	\[
		\left\{
			\ell \in \funclass{\psi} \st e^{-\enorm{x}} \leq \ell  
		 \right\} 
	\]
	is empty.
\end{exl}

We show below that if $\psi\colon [0, \infty) \to \R \cup \{+\infty\}$
is an admissible function of linear growth,
then for any proper log-concave function $f\colon \Red \to [0,\infty),$ 
there is a $\psi$-ellipsoidal function $\ell$ such that $f \leq \ell.$  
Next, we bound the norm of the maximizers of~\eqref{eq:psi_problem} in Lemma~\ref{lem:lowner_height_bounded},
and complete the proof of Theorem~\ref{thm:psi_lowner}.

\subsection{Existence of a ``covering''}

\begin{lemma}\label{lem:height_covering}
	Let $\psi\colon [0, \infty) \to \R \cup \{+\infty\}$ 
	be an admissible function
	and $f\colon \R^d \to [0, \infty)$ be a proper log-concave function.
	If there exists $\upthing{E} \in \ellipsd$ such that $f \leq \psiell,$
	then for an arbitrary $\alpha > \norm{f},$ there exists $(A \oplus \alpha, a) \in \ellipsd$
	such that $f \leq \psiell[( A \oplus \alpha, a)].$
\end{lemma}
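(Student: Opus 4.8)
The plan is to take the given $\psi$-ellipsoidal function $\psiellf{\cdot}$ with $f \leq \psiell$, represented by some $\upthing{E} = (A_0 \oplus \alpha_0, a_0) \in \ellipsd$, and deform it to raise (or lower) its height to the prescribed value $\alpha > \norm{f}$ while preserving the domination $f \leq \psiell[(A \oplus \alpha, a)]$. The natural move is to keep the center $a = a_0$ fixed and to scale the operator, replacing $A_0$ by $tA_0$ for a suitable $t > 0$, together with replacing $\alpha_0$ by $\alpha$. So I would look for a $\psi$-ellipsoidal function of the form $\ell(x) = \alpha e^{-\psi(\enorm{tA_0(x - a_0)})}$ and show that $f \leq \ell$ for an appropriate choice of $t = t(\alpha)$.

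First I would dispose of the case $\alpha \geq \alpha_0$: if we are allowed to increase the height, then taking $t = 1$ (i.e. keeping $A = A_0$) already works, since $\alpha e^{-\psi(\enorm{A_0(x-a_0)})} \geq \alpha_0 e^{-\psi(\enorm{A_0(x-a_0)})} = \psiellf{x} \geq f(x)$ pointwise. The substantive case is $\norm{f} < \alpha < \alpha_0$, where raising $\alpha$ down to $\alpha$ loses us a multiplicative factor and must be compensated by shrinking the support/steepness, i.e. by taking $t < 1$. Using the equivalence~\eqref{eq:psi}, the desired inequality $f \leq \ell$ is equivalent to $\psi(\enorm{tA_0(x-a_0)}) \leq -\log f(x) + \log \alpha$ for all $x$. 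Since $f \leq \psiell$, we know $-\log f(x) \geq \psi(\enorm{A_0(x-a_0)}) - \log\alpha_0$, so it suffices to prove
\[
	\psi(t r) \leq \psi(r) + \log\frac{\alpha_0}{\alpha} \quad \text{for all } r \geq 0,
\]
where $r = \enorm{A_0(x-a_0)}$ ranges over $[0,\infty)$ (or over $\dom\psi$ scaled; in either case this is the inequality to check). Here $c \coloneqq \log(\alpha_0/\alpha) > 0$ is a fixed positive constant.

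The remaining point is to show that for every $c > 0$ there is $t \in (0,1)$ with $\psi(tr) \leq \psi(r) + c$ for all $r \geq 0$. I would argue using convexity and $\psi(0) = 0$ (allowed by the normalization in Subsection~\ref{subsec:adm_convex_func}): by convexity, $\psi(tr) = \psi(tr + (1-t)\cdot 0) \leq t\psi(r) + (1-t)\psi(0) = t\psi(r)$, so it suffices to have $t\psi(r) \leq \psi(r) + c$, i.e. $(1-t)\psi(r) \leq c$. This would only hold for all $r$ if $\psi$ were bounded, which it is not. So the crude convexity bound alone is not enough, and I would instead split the range of $r$: on the compact set where $\psi(r) \leq K$ (for a large constant $K$ to be chosen) the bound $(1-t)\psi(r) \leq (1-t)K \leq c$ holds once $t$ is close enough to $1$; on the complementary range where $\psi(r)$ is large, I would use monotonicity of $\psi$ on its domain together with $t < 1$ to get $\psi(tr) \leq \psi(r) \leq \psi(r) + c$ directly. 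Choosing $K$ first (it can be anything, say $K = 1$) and then $t = 1 - c/K$ (intersected with $(0,1)$) makes both pieces work; the split point $r_K$ with $\psi(r_K) = K$ exists and is finite because $\psi \to \infty$. The main obstacle is exactly this: the single scaling parameter $t$ must simultaneously control the behavior of $\psi$ near the origin (where a little shrinking costs little) and far away (where $\psi$ is large but monotonicity saves us), and the clean way through is the two-region argument above rather than any global estimate. Finally I would record that $(tA_0) \oplus \alpha$ with center $a_0$ indeed lies in $\ellipsd$ (positive-definiteness and positivity are preserved under the scaling), so $(A, a) = (tA_0, a_0)$ is the required element.
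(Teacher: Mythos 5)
Your overall strategy---keep the center, scale the matrix, and split the analysis into two regions---is the right one and matches the spirit of the paper's proof. But there is a sign error in the key reduction, and that error hides a genuine gap.

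You reduce the claim to: for all $r\ge 0$, $\psi(tr) \leq \psi(r) + \log\frac{\alpha_0}{\alpha}$. Redo the algebra: you want $\psi(tr) \leq -\log f(x) + \log\alpha$ and you have $-\log f(x) \geq \psi(r) - \log\alpha_0$, so the sufficient condition is
\[
	\psi(tr) \;\leq\; \psi(r) - \log\alpha_0 + \log\alpha \;=\; \psi(r) - \log\tfrac{\alpha_0}{\alpha},
\]
with a \emph{minus} sign. As you have written it (with a $+$), the inequality is trivially true for every $t\in(0,1)$ just from monotonicity of $\psi$; the fact that your two-region argument is superfluous for your own stated inequality should have been a warning. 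With the correct sign, the sufficient condition $\psi(r)-\psi(tr)\ge c$ is \emph{false} at $r=0$ (both sides vanish, while $c>0$), and indeed it stays false in a whole neighborhood of $r=0$. So the reduction to a pure inequality in $\psi$, obtained by replacing $f$ by the upper envelope $\alpha_0 e^{-\psi(r)}$, throws away too much information near the center.

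The repair is the one the paper uses, and it requires a different ingredient in the inner region: near $a_0$ do not compare with $\alpha_0 e^{-\psi(r)}$ at all, but use the cruder bound $f(x)\le\norm{f}$ together with the hypothesis $\alpha>\norm{f}$. Concretely, one first fixes an intermediate scale (the paper uses $1/2$) and notes that outside a bounded set $S$ the inequality $\alpha e^{-\psi(r/2)}\ge e^{-\psi(r)}$ already holds (this is where convexity of $\psi$, and $\psi(r)\to\infty$, give you $\psi(r)-\psi(r/2)\to\infty$). Inside the bounded set $S$ one then shrinks the scaling factor $\rho<1/2$ further until $\alpha e^{-\psi(\rho r)}\ge\norm{f}$ on all of $S$; this uses continuity of $\psi$ at $0$ with $\psi(0)=0$ and $\alpha>\norm{f}$, not the comparison $f\le\psiell$. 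Monotonicity then upgrades the outer-region bound from scale $1/2$ to scale $\rho$. Your write-up never invokes $\alpha>\norm{f}$ in the inner region, which is precisely the missing idea.
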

\begin{proof}
	Without loss of generality, we assume that $\upthing{E} = \ball{d+1}$ and $\psi(0)=0.$
	Case $ \alpha > 1$ is trivial.
	Consider $\norm{f} < \alpha < 1.$ 
	Put $S= \{x \in \R^d \st \psiell[(1/2 \cdot \id \oplus \alpha, 0)] \leq \psiell[\ball{d+1}]\}.$
	Since $\psi$ is convex and admissible, it follows that $S$ is bounded. 
	Clearly, for a sufficiently small $\rho \in (0,1/2)$ we have 
	$\norm{f} \leq \psiell[(\rho \cdot \id \oplus \alpha, 0)](x)$
	for all $x \in S.$
	By monotonicity, 
	$\psiell[(1/2 \cdot \id \oplus \alpha, 0)]
		\leq \psiell[(\rho \cdot \id \oplus \alpha, 0)].$
	Thus, by construction,
	$f \leq \psiell[(\rho \cdot \id \oplus \alpha, 0)].$
\end{proof}

\begin{lemma}\label{lem:psi_lin_growth_bound}
	Let $\psi\colon [0, \infty) \to \R \cup \{+\infty\}$
	be an admissible function of linear growth with $\psi(0)=0$
	and $f\colon \R^d \to [0,\infty)$ be a proper log-concave function.
	Then for any $(A \oplus \alpha, a) \in \ellipsd$ with $\alpha > \norm{f}$,
	there exists $\gamma > 0$ such that $f \leq \psiell[(\gamma A \oplus \alpha, a)].$
\end{lemma}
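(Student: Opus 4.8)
The plan is to pass to logarithmic form and play the linear growth of $\psi$ against the (at least) linear growth of $-\log f$. By~\eqref{eq:psi}, the relation $f\leq\psiell[(\gamma A\oplus\alpha,a)]$ is equivalent to
\[
	\psi\!\left(\gamma\enorm{A(x-a)}\right)\ \leq\ -\log f(x)+\log\alpha
	\qquad\text{for all } x\in\R^d .
\]
I would first record two elementary observations. Since $\psi$ is admissible, convex, of linear growth, and $\psi(0)=0$, the number $c:=\lim_{t\to\infty}\psi(t)/t$ is finite and strictly positive: it is finite by the definition of linear growth, and if it were $0$ then (as $t\mapsto\psi(t)/t$ is non-decreasing) we would get $\psi(t)\leq 0$ for all $t$, forcing $\psi\equiv 0$ and contradicting $\psi(t)\to\infty$; moreover $\psi(t)\leq ct$ for all $t\geq 0$, again because $t\mapsto\psi(t)/t$ is non-decreasing. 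Next, set $\phi:=-\log f+\log\alpha$. Since $f$ is a proper log-concave function it is bounded, so $\phi$ is a convex lower semi-continuous function; since $f$ is integrable, the sublevel sets of $\phi$ (which are superlevel sets of $f$) are bounded convex sets; and since $f\leq\norm f<\alpha$, we have $\phi\geq\log(\alpha/\norm f)=:\eta>0$ everywhere, with $\inf\phi=\eta$ attained at some $x^\ast\in\R^d$.

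The heart of the matter is the linear lower bound
\[
	\phi(x)\ \geq\ \beta\,\enorm{A(x-a)}\qquad\text{for all } x\in\R^d
\]
for a suitable $\beta>0$ — essentially the standard fact that an integrable log-concave function decays at least exponentially. I would prove it as follows. Let $S=\{\phi\leq 2\eta\}$, a nonempty bounded closed convex set, and fix $M>0$ with $S\subseteq x^\ast+M\ball{d}$. For $\enorm{x-x^\ast}\leq M$ one has $\phi(x)\geq\eta\geq\tfrac{\eta}{M}\enorm{x-x^\ast}$. If $\enorm{x-x^\ast}>M$ and $\phi(x)<\infty$, restrict $\phi$ to the segment $[x^\ast,x]$, on which it is finite, convex, hence continuous, and let $w$ be the point at which this segment leaves $S$; then $\phi(w)=2\eta$ and $w=(1-\lambda)x^\ast+\lambda x$ with $\lambda=\enorm{w-x^\ast}/\enorm{x-x^\ast}\in(0,M/\enorm{x-x^\ast}]$, so convexity gives $2\eta\leq(1-\lambda)\eta+\lambda\phi(x)$, whence $\phi(x)\geq\eta(1+\lambda)/\lambda\geq\tfrac{\eta}{M}\enorm{x-x^\ast}$ (the case $\phi(x)=\infty$ being trivial). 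Combining $\phi(x)\geq\eta$ with $\phi(x)\geq\tfrac{\eta}{M}\enorm{x-x^\ast}$ and using $\enorm{A(x-a)}\leq\norm A\bigl(\enorm{x-x^\ast}+\enorm{x^\ast-a}\bigr)$ yields the displayed bound with $\beta$ depending only on $\eta$, $M$, $\norm A$, and $\enorm{x^\ast-a}$.

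Finally I would set $\gamma=\beta/c>0$. Then for every $x\in\R^d$,
\[
	\psi\!\left(\gamma\enorm{A(x-a)}\right)\ \leq\ c\gamma\,\enorm{A(x-a)}\ =\ \beta\,\enorm{A(x-a)}\ \leq\ \phi(x)\ =\ -\log f(x)+\log\alpha ,
\]
which is exactly the required inequality; hence $f\leq\psiell[(\gamma A\oplus\alpha,a)]$. The only non-immediate ingredient is the linear lower bound of the previous paragraph, and this is precisely where linear growth of $\psi$ is used — in contrast with Lemma~\ref{lem:height_covering}, where the absence of linear growth forced a genuinely different, more restrictive construction.
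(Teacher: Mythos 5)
Your proof is correct and follows essentially the same strategy as the paper: compare the at most linear growth of $\psi$ (via $\psi(t)\leq ct$ with $c=\lim_{t\to\infty}\psi(t)/t$) against the at least linear growth of $-\log f+\log\alpha$, choosing $\gamma$ small enough to close the gap. The one real difference is that the paper treats the exponential decay of a proper log-concave function, $f(x)\leq\Theta e^{-\vartheta\enorm{x}}$, as a black box (citing Lemma~2.2.1 of~\cite{brazitikos2014geometry}), whereas you reprove this fact from scratch via the sublevel set $S=\{\phi\leq 2\eta\}$ and a convexity argument along segments emanating from the minimizer $x^\ast$; the paper also normalizes to $A=\id$, $a=0$ first, while you carry $A$ and $a$ through and absorb them into $\beta$. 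Your version is a little longer but self-contained, which is a fair trade; both hinge on the same two ingredients and split $\R^d$ into a bounded near region (where $\phi\geq\eta$ suffices) and an unbounded far region (where the linear rates compete).
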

\begin{proof}
	By monotonicity of $\psi$, it suffices to prove the lemma for $A = \id.$
	Without loss of generality, we assume that $a = 0.$ 
	It is known that (see~\cite[Lemma~2.2.1]{brazitikos2014geometry})
	for any proper log-concave function $f$ on $\Red$, there are $\Theta, \vartheta > 0$ such that 
	\begin{equation*}\label{eq:pointbound}
		f(x) \leq \Theta e^{-\vartheta \enorm{x}}, \text{ for all } x\in\Red.
	\end{equation*} 
	On the other hand, since $\psi$ is convex and of linear growth, 
	there are $\gamma_1 > 0$ and $C>0$ such that 
	\[
		\Theta e^{-\vartheta \enorm{x}} \leq \alpha e^{-\psi(\gamma_1\enorm{x})} 
		\quad \text{for all}\ x \ \text{such that}\ \enorm{x} > C. 
	\] 
	By monotonicity of $\psi,$ this inequality holds for all
	$\gamma \in (0, \gamma_1).$

	However, by continuity,  there exists $\gamma_2 > 0$ such that
	inequality 
	\[
		\norm{f} \leq \alpha e^{-\psi(\gamma_2\enorm{x})} \; \
		\text{holds for all} \ x \ \text{such that}\  \enorm{x} \leq C. 
	\] 
	That is, $\gamma = \min\{\gamma_1, \gamma_2\}$ satisfies the required property.
\end{proof}

As an immediate consequence of Corollary~\ref{cor:existence} and Lemma~\ref{lem:psi_lin_growth_bound}, we get the following.
\begin{cor}\label{cor:psi_lin_lowner}
	Let $\psi\colon [0, \infty) \to \R \cup \{+\infty\}$
	be an admissible function  of linear growth. 
	Let $f\colon \R^d \to [0,\infty)$ be a proper log-concave function.
	Then there exists a solution to problem~\eqref{eq:psi_problem}.
\end{cor}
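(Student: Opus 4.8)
The plan is to chain Lemma~\ref{lem:psi_lin_growth_bound} into Corollary~\ref{cor:existence}; almost all of the work has in fact already been done. By Corollary~\ref{cor:existence}, it suffices to exhibit a single $\psi$-ellipsoidal function $\ell \in \funclass{\psi}$ with $f \leq \ell$.

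First I would reduce to the normalized situation $\psi(0)=0$. Since $e^{-\psi(t)+c}=e^{c}e^{-\psi(t)}$, we have $\funclass{\psi}=\funclass{\psi+c}$ for every constant $c$, so $\funclass{\psi}=\funclass{\psi-\psi(0)}$ and problem~\eqref{eq:psi_problem} is literally unchanged when $\psi$ is replaced by $\psi-\psi(0)$; moreover $\psi-\psi(0)$ is again an admissible function of linear growth. Hence we may and do assume $\psi(0)=0$. Next, recall that a proper log-concave function is bounded, so $\norm{f}<\infty$, and fix any $\alpha>\norm{f}$.

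Now apply Lemma~\ref{lem:psi_lin_growth_bound} to the $d$-ellipsoid $(\id\oplus\alpha,0)\in\ellipsd$: it produces a scalar $\gamma>0$ with $f\leq\psiell[(\gamma\id\oplus\alpha,0)]$. Thus $\ell:=\psiell[(\gamma\id\oplus\alpha,0)]$ is a $\psi$-ellipsoidal function lying above $f$, and Corollary~\ref{cor:existence} then guarantees that problem~\eqref{eq:psi_problem} admits a solution.

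There is essentially no obstacle at this stage: the genuinely substantive input is the linear-growth pointwise domination packaged in Lemma~\ref{lem:psi_lin_growth_bound} (which itself rests on the standard exponential decay estimate for proper log-concave functions), and once that is in hand the corollary follows by combining the two cited results. The only point worth a moment's care is that the normalization $\psi\mapsto\psi-\psi(0)$ alters none of the relevant data, which is immediate from $\funclass{\psi}=\funclass{\psi+c}$.
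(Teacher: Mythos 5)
Your proof is correct and follows exactly the paper's route: combining Lemma~\ref{lem:psi_lin_growth_bound} (to produce one dominating $\psi$-ellipsoidal function, after the harmless normalization $\psi(0)=0$) with Corollary~\ref{cor:existence} (compactness of minimizing sequences). The paper states the corollary as an immediate consequence of these two results without spelling out the details, which you have filled in appropriately.
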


\subsection{Bound on the height}
The following result
is an extension of the analogous result from~\cite{ivanov2020functional}
about the John $s$-function with a similar proof.
The idea of the  proof  can be traced back to~\cite{alonso2018john}.
\begin{lemma}\label{lem:lowner_height_bounded}
	Let $\psi\colon [0, \infty) \to \R \cup \{+\infty\}$ be an admissible function
	and
	$f\colon \R^d \to [0, \infty)$ be a proper log-concave function.
	Then for a solution $L$ to problem~\eqref{eq:psi_problem} we have
		\begin{equation}\label{eq:lowner_func_norm_bound}
			\norm{L} \leq e^d \norm{f}.
		\end{equation}
\end{lemma}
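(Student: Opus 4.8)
The plan is to argue by contradiction: assume that a solution $L = \psiell[(A\oplus\alpha,a)]$ to problem~\eqref{eq:psi_problem} has height $\alpha = \norm{L} > e^d\norm{f}$, and produce a competitor $\psi$-ellipsoidal function $\ell'$ with $f \leq \ell'$ but $\int_{\R^d}\ell' < \int_{\R^d} L$, contradicting minimality. The natural competitor is obtained by \emph{lowering the height while enlarging the ellipsoid}: replace $(A\oplus\alpha, a)$ by $(\gamma A \oplus \alpha', a)$ with $\alpha' < \alpha$ and $\gamma < 1$ chosen so that the two effects on the integral, namely the factor $\alpha'/\alpha$ from the height and the factor $\gamma^{-d}$ from $\det(\gamma A)^{-1}$ in~\eqref{eq:psi_outer_volume}, multiply to something strictly less than $1$, while $f\leq \psiell[(\gamma A\oplus\alpha',a)]$ is preserved.

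The key quantitative input is that $f$ itself is bounded: $f\leq \norm{f}\cdot\chi_{\R^d}$ pointwise (in fact $f$ is, up to its $L_\infty$ norm, supported where it is nonzero, but boundedness suffices here). Using $f\leq L$ we know $f(x)\leq \alpha e^{-\psi(\enorm{A(x-a)})}$ everywhere. After reducing to $a=0$, $\psi(0)=0$, I would split $\R^d$ according to the value of $\psi(\enorm{Ax})$. On the region where $\psi(\enorm{Ax})$ is large, say $\geq d$, the value $\alpha e^{-\psi(\enorm{Ax})}$ is at most $\alpha e^{-d} > \norm{f}$... more carefully: on the region where $\psi(\enorm{Ax}) \geq \ln(\alpha/\norm{f})$ we have $L(x)\leq\norm{f}$, so there we can afford to replace $L$ by the constant $\norm{f}$-level (or rather keep $L$ but note $f\leq\norm{f}$ there automatically). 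Shrinking the operator from $A$ to $\gamma A$ with $\gamma<1$ only increases $\psiell$ pointwise by monotonicity of $\psi$, so inequality $f\leq L'$ on the ``far'' region is automatic once $\alpha' \geq \norm{f}$; on the ``near'' region where $\psi(\enorm{Ax}) \leq \ln(\alpha/\norm{f}) \leq$ something controlled, one checks that with $\alpha' = e^{-d}\alpha$ (say) and $\gamma$ close enough to $1$, convexity of $\psi$ with $\psi(0)=0$ gives $\psi(\gamma t) \leq \gamma\psi(t)$, hence $\alpha' e^{-\psi(\gamma\enorm{Ax})} \geq e^{-d}\alpha\, e^{-\gamma\psi(\enorm{Ax})} \geq \alpha e^{-\psi(\enorm{Ax})} \geq f(x)$ provided $(1-\gamma)\psi(\enorm{Ax}) \leq d - $ (a bound on $\psi(\enorm{Ax})$), which holds for $\gamma$ sufficiently close to $1$ since $\psi(\enorm{Ax})$ is bounded on the near region. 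Then the integral of $L'$ is $\frac{\alpha'}{\gamma^d \det A}\psiovolb = \frac{e^{-d}}{\gamma^d}\int_{\R^d}L$, which is strictly less than $\int_{\R^d}L$ as soon as $\gamma^d > e^{-d}$, i.e. $\gamma > e^{-1}$ — easily compatible with $\gamma$ near $1$.

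The main obstacle is getting the bookkeeping on the ``near'' region right: one must ensure the region $\{x : \psi(\enorm{Ax}) \leq \ln(\alpha/\norm{f})\}$ is bounded (true since $\psi\to\infty$) and that the required closeness of $\gamma$ to $1$ is compatible with $\gamma > e^{-1}$; the cleanest route is to show that if $\alpha > e^d\norm{f}$ strictly then the product $\frac{\alpha'}{\gamma^d\alpha}$ can be made $<1$, and if $\alpha = e^d\norm{f}$ exactly one still gets $\leq$, so by a limiting/continuity argument the sharp bound $\norm{L}\leq e^d\norm{f}$ follows. I would also double-check the degenerate possibilities ($\psi$ takes the value $+\infty$, i.e. $\dom\psi$ bounded), where the ``far'' region may be where $\psi = +\infty$ and $L = 0$; there $f = 0$ too since $f\leq L$, so nothing needs to be done, and the argument localizes to $\{\psi(\enorm{Ax}) < \infty\}$ without change.
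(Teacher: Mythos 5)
Your strategy --- perturb the alleged optimum by simultaneously shrinking the operator ($A\to\gamma A$, $\gamma<1$) and lowering the height ($\alpha\to\alpha'=e^{-c}\alpha$), then show the integral decreases while $f\leq\ell'$ is preserved --- is a legitimate and more elementary alternative to the paper's argument, but two of the verification steps are wrong as written. First, the claim that on the far region $\{\psi(\enorm{Ax})\geq\ln(\alpha/\norm{f})\}$ the bound $f\leq L'$ is ``automatic once $\alpha'\geq\norm{f}$'' is false: $L'(x)$ decays to $0$ far out, so knowing $f\leq\norm{f}$ there gives you no control. What you actually need on the far region is $L'\geq L$, i.e.\ $\psi(\enorm{Ax})-\psi(\gamma\enorm{Ax})\geq c$, which does hold out there since convexity gives $\psi(t)-\psi(\gamma t)\geq(1-\gamma)\psi(t)$. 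Second, the near-region chain $\alpha'e^{-\psi(\gamma\enorm{Ax})}\geq e^{-d}\alpha\,e^{-\gamma\psi(\enorm{Ax})}\geq\alpha e^{-\psi(\enorm{Ax})}$ requires $(1-\gamma)\psi(\enorm{Ax})\geq d$ (not $\leq$, as you wrote), and this fails as $x\to a$ where $\psi(\enorm{A(x-a)})\to\psi(0)=0$: you cannot squeeze $L'\geq L$ near the center. There you must instead exploit $f\leq\norm{f}$, which $L$ wastes badly because $\alpha$ is too large.

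The repair that lands the sharp constant: set $u=\psi(\enorm{A(x-a)})$ and $D=\ln(\alpha/\norm{f})$. Then $f\leq\min\{L,\norm{f}\}=\alpha e^{-\max\{u,D\}}$, while by $\psi(\gamma t)\leq\gamma\psi(t)$ one has $L'\geq e^{-c}\alpha\,e^{-\gamma u}$. Hence $f\leq L'$ follows from $\max\{u,D\}-\gamma u\geq c$ for all $u\in[0,\infty]$, which reduces to the single condition $(1-\gamma)D\geq c$. The integral ratio is $e^{-c}/\gamma^{d}$, strictly less than $1$ iff $c>d\ln(1/\gamma)$. These two constraints are simultaneously satisfiable with $\gamma\to 1^{-}$ precisely when $D>d$, i.e.\ $\norm{L}>e^{d}\norm{f}$, which yields the contradiction. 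Note that fixing $c=d$ (as you suggest with $\alpha'=e^{-d}\alpha$) forces $\gamma$ to stay bounded away from $1$ and produces a strictly worse constant; you must let $c$ and $\gamma$ go to their limits together.

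For comparison, the paper's proof is structurally different and reuses its earlier machinery: it defines $\Psi(\alpha)=\det A_{\alpha}$ for the minimal-integral dominator of each fixed height $\alpha$, shows that $\Phi(t)=\Psi(e^{t})^{1/d}$ is concave via the interpolation Lemma~\ref{lem:psi_containment} together with Minkowski's determinant inequality~\eqref{eq:minkowski_det_ineq}, and then derives $\alpha_0\leq e^{d}\norm{f}$ by comparing the concave $\Phi$ with the convex exponential $\Phi(t_0)e^{(t-t_0)/d}$ that touches it at the optimal height. Your route, once corrected, is self-contained and does not need the interpolation lemma, at the cost of the parameter bookkeeping above.
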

\begin{proof}
	Without loss of generality, we assume that $\psi(0)=0.$
	There is nothing to prove if $\norm{f} = \norm{L}.$
	Assume $\norm{f} < \norm{L}.$
	We define a function $\Psi\colon(\norm{f},+\infty) \to [0,+\infty)$ as follows.
	Let $\alpha > \norm{f}.$
	By Lemma~\ref{lem:height_covering},
	there exists $\upthing{E}_{\alpha} = (A_{\alpha} \oplus \alpha,a) \in \ellipsd$
	such that $f \leq \psiell[\upthing{E}_{\alpha}].$	
	By Lemma~\ref{lem:psi_boundedness} and by compactness,
	we may assume that $\psiell[\upthing{E}_{\alpha}]$ is of minimal integral
	among  $\psi$-ellipsoidal functions~$\ell$
	of height $\alpha$ and such that $f \leq \ell$.
	Put $\Psi(\alpha)=\det A_{\alpha}$.
	The function $\Psi$ is well-defined.
	Indeed, by~\eqref{eq:psi_outer_volume},
	we have that the determinants of operators
	that correspond to two different $\psi$-ellipsoidal functions
	with the same heights and the same integrals,
	are equal.

	\begin{claim}
		For any $\alpha_1, \alpha_2 \in ( \norm{f}, \infty)$ and $\lambda \in [0,1],$ 
		we have
		\begin{equation}\label{eq:lowner_ellips-d_quasi_log-concavity}
			\Psi \!\left(\alpha_1^\lambda \alpha_2^{1-\lambda} \right)^{1/d} \ge \lambda 
			\Psi(\alpha_1)^{1/d} + (1 - \lambda) \Psi(\alpha_2)^{1/d}.
		\end{equation}
	\end{claim}
	\begin{proof}
		Let $\left(A_1 \oplus \alpha_1, a_1 \right),$
		$\left(A_2 \oplus \alpha_2, a_2 \right) \in \ellipsd$  be such that 
		\[
			f \leq \psiell[\left(A_1 \oplus \alpha_1, a_1 \right)]
			\quad \text{and} \quad
			f \leq \psiell[\left(A_2 \oplus \alpha_2, a_2 \right)],
		\]
		and 
		\[
			\Psi(\alpha_1) = \det A_1
			\quad \text{and} \quad
			\Psi(\alpha_2) = \det A_2.
		\]
		By the definition of the function $\Psi$, Lemma~\ref{lem:psi_containment}
		and identity~\eqref{eq:psi_outer_volume},
		we have that
		\[
			\Psi\! \left(\alpha_1^\lambda \alpha_2^{1-\lambda}\right)
			\geq \det \left(\lambda A_1 + (1-\lambda)A_2 \right).
		\]
		Now,~\eqref{eq:lowner_ellips-d_quasi_log-concavity} follows immediately from
		Minkowski's determinant inequality~\eqref{eq:minkowski_det_ineq}.
	\end{proof}

	Set $\Phi(t) = \Psi\! \left(e^{t} \right)^{1/d}$ for all $t \in 
	\left({\log \norm{f}}, +\infty \right).$
	Inequality~\eqref{eq:lowner_ellips-d_quasi_log-concavity} implies
	that $\Phi$ is a concave function on its domain.	
		
	Let $\alpha_0$ be the height of $L$. 
	Then, by~\eqref{eq:psi_outer_volume}, for any $\alpha$ in the domain of $\Psi,$
	we have that 
	\[
		\frac{\alpha_0}{\Psi(\alpha_0)}
		\leq
		\frac{\alpha}{\Psi(\alpha)}.
	\]
	Setting $t_0 =  \log \alpha_0$ and taking root of order $d$, for any $t$ in the domain of $\Phi$ we obtain
	\[
		\Phi(t) \leq \Phi(t_0) e^{\frac{1}{d}(t - t_0)}.
	\]
	The expression on the right-hand side is a 
	convex function of $t$, while $\Phi$ is a concave 
	function. Since these functions take the same value at $t = t_0,$ we conclude 
	that the graph of $\Phi$ lies below the tangent line to the graph of $\Phi(t_0) 
	e^{\frac{1}{d}( t - t_0)}$ at the point $t_0.$ That is, 
	\[
		\Phi(t) \leq \Phi(t_0) \left(1 + \frac{1}{d}(t - t_0)\right).
	\]
	Passing to the limit as $t \to  {\log \norm{f}}$  and since the values 
	of $\Phi$ are  positive, we get
	\[
		0 \leq 1+\frac{\log \norm{f}}{d}-\frac{t_0}{d},
	\]
	or, equivalently, $t_0 \leq  {d} + \log \norm{f}.$
	Therefore, $\alpha_0 = \norm{L} \leq  e^{d} \norm{f}.$
	This completes the proof of Lemma~\ref{lem:lowner_height_bounded}.	
\end{proof}

\subsection{Proof of Theorem~\ref{thm:psi_lowner}}

Since $\funclass{\psi} = \funclass{\psi + c}$ for any constant $c,$
Theorem~\ref{thm:psi_lowner} is an immediate consequence of
Theorem~\ref{thm:lowner_uniqueness_general} and Lemma~\ref{lem:lowner_height_bounded}.

\section{Classes of \tpdfs-ellipsoidal functions}\label{sec:s_lowner}

In this Section, we discuss $s$-ellipsoidal functions and their properties.
We recall the definition of John $s$-function
and explain basic properties of the duality between two optimization problems.
Next, we prove Theorem~\ref{thm:s_lowner} and~\ref{thm:zero_limit}.

\subsection{Basic properties of \tpdfs-ellipsoidal functions and the John \tpdfs-functions}\label{subsec:john_s-func}

Recall that for any $\upthing{E}= (A \oplus \alpha, a) \in \ellipsd$ and 
any $s \in (0, \infty),$ the $s$-ellipsoidal function $\sell{\upthing{E}}$
is given by
	\begin{equation}\label{eq:ellipsoid_function_correspondence}
		\sell{\upthing{E}}(x)=\alpha e^{- \psi_s \left(
		\enorm{A(x-a)}\right)} =
		\alpha\left[\frac{1+\sqrt{1+\frac{4}{s^2} \enorm{A(x-a)}^2}}{2}
		\cdot \exp\left({1-\sqrt{1+\frac{4}{s^2} \enorm{A(x-a)}^2}}\right)\right]^{s/2}.
	\end{equation}
However, we have not shown that $\psi_s$ is an admissible function.
To avoid boring computations,
we show the basic properties of $s$-ellipsoidal functions
using their log-conjugate functions.
At the same time, we reveal the duality between our definition of L\"owner $s$-function
and the definition of John $s$-function introduced in~\cite{ivanov2020functional}.

For any $d$-ellipsoid $\upthing{E}=(A\oplus\alpha,a)\in \ellipsd$ and any $s>0,$ define
\[
	\shf{\upthing{E}} (x) = 
	\begin{cases}
		\cfrac{1}{\alpha}
		\left[1 -\enorm{A^{-1} (x - a)}^2\right]^{s/2},& 
		\text{ if }x \in A \ball{d}+a\\
		0,&\text{ otherwise}.
	\end{cases}
\]
The geometric sense of this function is as follows. 
The graph of  $\shf[1]{\left(A \oplus \alpha,a \right)}$ restricted to its support
coincides with the upper hemisphere of the $d$-ellipsoid $(A \oplus \alpha^{-1/s})\ball{d+1} + a.$
It follows that $\shf{\upthing{E}} \leq f$
if and only if the $d$-ellipsoid $\left(A \oplus 1/\alpha\right)\ball{d+1} + a$
is contained within the subgraph of $f^{1/s}.$ 
Another consequence of the observation that $\shf[1]{\upthing{E}}$ is a
``height'' function of the $d$-ellipsoid is that $\shf[1]{\upthing{E}}$
is concave on its support. Hence, $\shf{\upthing{E}}$ is  log-concave.
Clearly, $\shf{\upthing{E}}$ is a proper log-concave function.

For a $d$-ellipsoid $\upthing{E}=(A\oplus\alpha,a)\in \ellipsd,$ we put
\[
	\shf[0]{\upthing{E}}  = \frac{1}{\alpha}\chi_{A \ball{d}+a}
	\quad \text{and} \quad
	\shf[\infty]{\upthing{E}}  = \frac{1}{\alpha} 
	e^{- \enorm{{A^{-1} (x - a)}}^2}.
\]

The authors of~\cite{ivanov2020functional} consider the problem
\begin{equation}\label{eq:john_problem}
	\max_{\upthing{E}\in\ellipsd}\int_{\R^d} \shf{\upthing{E}}
	\quad \text{subject to}\quad
	\shf{\upthing{E}}\leq f;
\end{equation}
and show that for any fixed $s \in [0,\infty)$
the solution to this problem exists and is unique~\cite[Theorems~4.5 and~7.3]{ivanov2020functional}
for any proper log-concave function $f\colon \R^d \to [0,\infty).$
The case $s=0$ was studied earlier in~\cite{alonso2018john}.
For a proper log-concave function $f\colon \R^d \to [0,\infty),$
we call the unique solution to problem~\eqref{eq:john_problem}
the \emph{John $s$-function} of $f$ and denote it by  $\selldense{f}.$

In the next lemma we prove that for any $s \in [0, \infty]$, the affine classes  
$\left\{\shf{\upthing{E}} \st \upthing{E} \in \ellipsd\right\}$
and 
$\funclass{\psi_s} = \left\{\sell{\upthing{E}} \st \upthing{E} \in \ellipsd\right\}$
are the affine classes of two functions polar to each other. 
That is, problems~\eqref{eq:john_problem} and~\eqref{eq:s_problem} are  dual.
This gives us a reason for considering classes of $s$-ellipsoidal functions.   
\begin{lemma}\label{lem:duality_h_l}
	Let $\upthing{E}=(A\oplus\alpha,0)\in\ellipsd $ and $s\in [0,\infty].$ 
	Then
	\begin{equation}\label{eq:duality_l_j}
		\left(\shf{\upthing{E}}\right)^{\circ}= \sell{\upthing{E}} \quad 
		\text{and} \quad 
		\left(\sell{\upthing{E}}\right)^{\circ}= \shf{\upthing{E}} .
	\end{equation}
\end{lemma}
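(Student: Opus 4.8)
The plan is to compute the log-conjugate directly from the definition $\loglego{g}(y)=\inf_{x}e^{-\iprod{x}{y}}/g(x)$, using the identity $\loglego{(e^{-\phi})}=e^{-\legendre\phi}$, so that the whole problem reduces to a one-dimensional Legendre-transform computation. By the representation identities, it suffices to treat the case $\upthing{E}=(\id\oplus 1,0)$: indeed, if $g(x)=\alpha\,g_0(Ax)$ then a change of variables gives $\loglego{g}(y)=\tfrac1\alpha\loglego{g_0}(A^{-1}y)$ (using that $A$ is symmetric positive-definite), and this is exactly the way $\shf{\upthing{E}}$ and $\sell{\upthing{E}}$ transform under the representing data. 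So I would first record this reduction, then prove the two scalar identities: that the function $y\mapsto [1-\enorm{y}^2]_+^{s/2}$ and the function $y\mapsto e^{-\psi_s(\enorm y)}$ are log-conjugate to each other (with the appropriate normalization at $s=1$ giving the general $s$ by raising to the power $s/2$, or directly handling general $s$).

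The key computation is the following. For a radial function $e^{-\phi(\enorm x)}$ with $\phi$ convex and increasing, the log-conjugate is again radial, $e^{-\phi^\ast(\enorm y)}$, where $\phi^\ast$ is essentially the one-dimensional Legendre transform of $\phi$ on $[0,\infty)$: concretely $\phi^\ast(r)=\sup_{t\ge 0}\{rt-\phi(t)\}$. Thus I need only check that the one-dimensional Legendre transform of $t\mapsto -\tfrac s2\ln(1-t^2)$ on $[0,1)$ equals $\psi_s$, and conversely (by Legendre duality it is enough to do it once, since both functions are lower semi-continuous convex and proper). Differentiating, the supremum defining $\legendre\phi(r)$ is attained where $r=\phi'(t)=\tfrac s2\cdot\tfrac{2t}{1-t^2}=\tfrac{st}{1-t^2}$; solving the quadratic $rt^2+st-r=0$ for $t\in[0,1)$ gives $t=\tfrac{-s+\sqrt{s^2+4r^2}}{2r}$, equivalently $\tfrac1{1-t^2}=\tfrac{1+\sqrt{1+4r^2/s^2}}{2}$ after simplification. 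Substituting back into $rt-\phi(t)$ and simplifying yields exactly the bracketed expression in~\eqref{eq:ellipsoid-function_correspondence}, i.e. $\psi_s(r)$. The endpoint cases $s=0$ and $s=\infty$ are handled separately but are easy: for $s=0$ the function $\shf[0]{\upthing{E}}=\tfrac1\alpha\chi_{A\ball d}$ has log-conjugate $\tfrac1\alpha e^{-\enorm{A^{-1}y}}$ (Legendre transform of the convex indicator of $[0,1]$ is $r\mapsto r$), and for $s=\infty$ this is the classical fact that the Gaussian $e^{-\enorm x^2}$ is, up to the usual constant, self-polar, matching $\psi_\infty(t)=t^2$.

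Once one of the two identities in~\eqref{eq:duality_l_j} is established, the other follows by applying the log-conjugate again and invoking the involutivity of the log-conjugate transform on proper log-concave functions (which holds here since both $\shf{\upthing{E}}$ and $\sell{\upthing{E}}$ are proper log-concave, as already observed in the text right before the lemma), together with the fact that for $s=1$ (hence all $s$ via the $s/2$-power) the relevant convex functions are lower semi-continuous so that $\legendre\legendre\phi=\phi$. The main obstacle is purely computational: carrying out the substitution back into the Legendre transform and recognizing that the messy algebra collapses to precisely the stated formula for $\psi_s$; I would organize this by first writing $\phi_s(t)=-\tfrac s2\ln(1-t^2)$, computing $\legendre\phi_s$ cleanly in terms of the auxiliary quantity $u=\sqrt{1+4r^2/s^2}$, and only at the end comparing with~\eqref{eq:ellipsoid-function_correspondence}. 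Care is also needed at the boundary of the effective domain (the value $\shf{\upthing{E}}=0$ outside $A\ball d$ corresponds to $\phi_s=+\infty$ there, which is consistent with the Legendre transform being taken as a supremum over $t\in[0,1)$), but this is routine once the domains are tracked.
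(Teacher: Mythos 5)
Your proposal is correct and follows essentially the same route as the paper: both reduce to the case $\upthing{E}=\ball{d+1}$ via the identities $\loglego{(f\circ A)}=\loglego{f}\circ A^{-1}$ and $\loglego{(f^q)}(x)=\bigl[\loglego{f}(x/q)\bigr]^q$, and both then reduce the multivariate infimum/supremum to a scalar computation along the ray spanned by $y$; the only cosmetic difference is that the paper writes the scalar step as the direct evaluation of $\inf_{t\in[0,1)} e^{-t\enorm{y}}/(1-t^2)$ whereas you phrase it as the Legendre transform of $t\mapsto -\tfrac{s}{2}\ln(1-t^2)$ and carry out the calculus, which is the same computation in different notation.

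One small slip worth fixing: in your $s=0$ parenthetical you state that $\shf[0]{\upthing{E}}=\tfrac{1}{\alpha}\chi_{A\ball{d}}$ has log-conjugate $\tfrac{1}{\alpha}e^{-\enorm{A^{-1}y}}$. Applying your own change-of-variables rule to $g(x)=\tfrac{1}{\alpha}\chi_{\ball{d}}(A^{-1}x)$ gives $\loglego{g}(y)=\alpha\,e^{-\enorm{Ay}}$, which is the correct answer and is exactly $\sell[0]{(A\oplus\alpha,0)}(y)=\alpha e^{-\psi_0(\enorm{Ay})}$; the version you wrote has both the scalar factor and the matrix inverted.
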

Before we prove this lemma, let us mention several corollaries.
First, it follows that for any $s \in [0,\infty]$,
$\psi_s$ is indeed a strictly increasing admissible function,
and $s$-ellipsoidal functions are proper log-concave functions.
Moreover, as the support of $\shf{\ball{d+1}}$ is bounded for any $s \in [0, \infty),$
we conclude that $\psi_s$ is of linear growth for any $s \in [0, \infty).$
Also, we will need the following simple observations.
For any $\upthing{E} \in \ellipsd,$ we have
	\begin{equation}\label{eq:mahler_volume_alike}
		\int_{\Red} \shf{\upthing{E}} \cdot 
		\int_{\Red} \sell{\upthing{E}} =
		\int_{\Red} \shf{\ball{d+1}} \cdot 
		\int_{\Red} \sell{\ball{d+1}}.
	\end{equation}
Clearly, $\shf{\ball{d+1}}(x)$ is a strictly decreasing function of $s$
on $[0,+\infty)$ for any fixed  $x \in \ball{d} \setminus \{0\}.$ 
By this and~\eqref{eq:polarity_reverse}, we have that
	\begin{equation}\label{eq:ell_monotonic_in_s}
		\sell{\ball{d+1}}(x)\quad \text{is a strictly increasing function of}
		\ s\ \text{on}\ [0,+\infty)
		\ \text{for any fixed}
		\ x\in\R^d \setminus \{0\}.
	\end{equation}

\begin{proof}[Proof of Lemma~\ref{lem:duality_h_l}]
	Identities~\eqref{eq:duality_l_j} are trivial in two  cases $s=0$ and $s= \infty.$
	Assume $s \in (0,\infty).$
	By definition, for any log-concave function $f$ and any positive-definite operator $A$,
	we have
	$
		\loglego{(f\circ A)}=\loglego{f}\circ A^{-1}.
	$
	Thus, it suffices to consider $\upthing{E} = \ball{d+1}.$
	Consider $f = \left(\shf{\ball{d+1}}\right)^{2/s} = 1 - |x|^2,$
	then
	\[
		\loglego{f}(y)
		= \inf_{\enorm{x} < 1} \frac{e^{-\iprod{x}{y}}}{1 - \enorm{x}^2}
		= \inf_{t \in [0, 1)} \frac{e^{-t |y|}}{1 - t^2}
		= \frac{1 + \sqrt{1 + |y|^2}}{2} \exp\left(1 - \sqrt{1 + |y|^2}\right).
	\]

	Again, by the definition of the log-conjugate function, we have
	$
		\loglego{\left(f^q\right)}\!\left(x\right) 
		= \left[\loglego{f}\!\left(x/q\right)\right]^{{q}}
	$
	for all $x \in \R^d$ and $  q > 0.$
	Thus, we prove the leftmost identity in~\eqref{eq:duality_l_j},
	which implies the rightmost identity. 
\end{proof}

\subsection{L\"owner \tpdfs-functions}

We proved all the required results to understand
the properties of existence and uniqueness of a solution to problem~\eqref{eq:s_problem}.
\begin{proof}[Proof of Theorem~\ref{thm:s_lowner}]
	The result follows from Corollary~\ref{cor:psi_lin_lowner},
	Theorem~\ref{thm:lowner_uniqueness_general} and the fact that $\psi_s$
	is a strictly increasing admissible function of linear growth for a fixed $s \in [0, \infty).$
\end{proof}

The following lemma says that for fixed $s_1$ and $s_2,$
the integrals of the L\"owner functions
$\soelldense[s_1]{f}$ and $\soelldense[s_2]{f}$ are bounded by each other. 

Note that by~\eqref{eq:ell_monotonic_in_s},
$\ovolbs$ is an increasing function of $s \in [0,\infty).$
By simple computation, we have $\ovolbs[0] = d! \vol{d} \ball{d}.$
Actually, $\ovolbs$ can be computed using hypergeometric functions.
We claim without proof that 
	\begin{equation*}
		\ovolbs= \pi^{d/2} s^{d}
			\left(
				d \cdot 
					U\!\left(\frac{d}{2} + 1; d + \frac{s}{2} + 1; s\right)
					+ U\!\left(\frac{d}{2}; d + \frac{s}{2} + 1; s\right)
			\right),
	\end{equation*}
where $U(a;b;z)$ is the \emph{hypergeometric Tricomi} function defined by
	\begin{equation*}
		U(a;b;z) = 
			\frac{1}{\Gamma(a)} 
			\int_{0}^{+\infty} v^{a - 1}(v + 1)^{b - a - 1} e^{-z v}\di  v.
	\end{equation*}
We do not use this representation of $\ovolbs$ in our proofs.

\begin{lemma}\label{lem:integral_comparison}
	Let $f\colon \R^d \to [0, \infty)$ be a proper log-concave function  and $0\leq s_1<s_2$. Then
		\begin{equation}\label{eq:integral_comparison}
			\frac{\ovolbs[s_1]}{\ovolbs[s_2]}
			\leq \frac{\int_{\R^d} \soelldense[s_1]{f}}
				{\int_{\R^d} \soelldense[s_2]{f}}
			\leq
				\sqrt{\left(\frac{d+s_2}{s_2}\right)^{s_2}\left(\frac{d+s_2}{d}\right)^d}
			\cdot	
				\frac{\ovolbs[s_1]}{\ovolbs[s_2]}.
		\end{equation}
\end{lemma}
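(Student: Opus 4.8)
The plan is to prove the two inequalities in~\eqref{eq:integral_comparison} by exploiting the monotonicity of $s \mapsto \sell{\ball{d+1}}(x)$ established in~\eqref{eq:ell_monotonic_in_s}, together with the explicit relation~\eqref{eq:s_outer_volume} between the integral of an $s$-ellipsoidal function and the data $(A \oplus \alpha, a)$ of its representing $d$-ellipsoid. The key mechanism is that the classes $\funclass{\psi_{s_1}}$ and $\funclass{\psi_{s_2}}$ use the \emph{same} set of $d$-ellipsoids $\ellipsd$ as parameters, so any $d$-ellipsoid feasible for one problem yields a comparison function in the other class, and only the density profile $\psi_s$ changes.

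For the left inequality, let $L = \soelldense[s_2]{f}$ be represented by $\upthing{E} = (A \oplus \alpha, a)$. Since $\sell[s_1]{\ball{d+1}} \leq \sell[s_2]{\ball{d+1}}$ pointwise by~\eqref{eq:ell_monotonic_in_s}, identity~\eqref{eq:psi_st_form} gives $\sell[s_1]{\upthing{E}} \leq \sell[s_2]{\upthing{E}}$ for the \emph{same} $\upthing{E}$ (the affine substitution $x \mapsto A(x-a)$ and the scaling by $\alpha$ are applied identically in both classes); hence $f \leq \sell[s_2]{\upthing{E}} $ is no longer quite enough — I instead argue the other direction: $f \leq L = \sell[s_2]{\upthing{E}}$, and because $\sell[s_1]{\upthing{E}} \leq \sell[s_2]{\upthing{E}}$ I cannot directly conclude feasibility in class $s_1$. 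So the correct move is to take $L' = \soelldense[s_1]{f}$ represented by $(A' \oplus \alpha', a')$ and compare: since $\sell[s_2]{\ball{d+1}} \geq \sell[s_1]{\ball{d+1}}$, the $d$-ellipsoid $(A' \oplus \alpha', a')$ is feasible for problem~\eqref{eq:s_problem} with $s = s_2$ as well (because $f \leq \sell[s_1]{(A'\oplus\alpha',a')} \leq \sell[s_2]{(A'\oplus\alpha',a')}$), so by minimality $\int \soelldense[s_2]{f} \leq \int \sell[s_2]{(A'\oplus\alpha',a')} = \frac{\alpha'}{\det A'}\ovolbs[s_2] = \frac{\int \soelldense[s_1]{f}}{\ovolbs[s_1]}\,\ovolbs[s_2]$, which is exactly the left inequality.

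For the right inequality I go the other way and need a quantitative, rather than just monotone, comparison of the two profiles. Starting from $L = \soelldense[s_2]{f}$ represented by $(A \oplus \alpha, a)$, I want to inflate it to an $s_1$-ellipsoidal function that still lies above $f$: it suffices to find a scalar $c \geq 1$ (depending only on $d, s_1, s_2$) with $\sell[s_2]{\ball{d+1}} \leq c\cdot \sell[s_1]{(\gamma\id \oplus 1, 0)}$ for a suitable $\gamma \le 1$, equivalently a pointwise bound $\psi_{s_1}(\gamma t) \leq \psi_{s_2}(t) + \ln c$ for all $t \ge 0$. Using Lemma~\ref{lem:duality_h_l} it is cleaner to phrase this on the conjugate side: $\shf[1]{\ball{d+1}}$, the concave ``height'' functions, satisfy $\shf[1]{\ball{d+1}}^{s_1/s_2}$ dominating $\shf[1]{\ball{d+1}}$ up to a multiplicative constant, since $(1-\enorm{x}^2)^{s_1/2} = \left[(1-\enorm{x}^2)^{s_2/2}\right]^{s_1/s_2}$ and on $[0,1]$ one has $u \le u^{s_1/s_2}$; then transport the estimate back through polarity (using $\loglego{(f^q)}(x) = [\loglego{f}(x/q)]^q$) to get a constant comparison $\int \sell[s_1]{\upthing{E}} \le K(d,s_1,s_2)\int \sell[s_2]{\upthing{E}}$ over all $\upthing{E}$, hence $\int\soelldense[s_1]{f} \le K \int \sell[s_1]{\upthing{E}_{L}} \le K \cdot \frac{\ovolbs[s_1]}{\ovolbs[s_2]}\int\soelldense[s_2]{f}$ once the scalar is absorbed into $\ovolbs[s_1]/\ovolbs[s_2]$.

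The main obstacle is pinning down the explicit constant $\sqrt{\left(\frac{d+s_2}{s_2}\right)^{s_2}\left(\frac{d+s_2}{d}\right)^d}$. I expect it to come from optimizing the ratio $\sell[s_2]{\ball{d+1}}(x)/\sell[s_1]{(\gamma\id\oplus 1,0)}(x)$ — or, equivalently, the ratio of the hemisphere-type profiles $h^{(s)}$ — over the single radial variable $t = \enorm{x}$, i.e.\ a one-dimensional calculus problem whose maximizer involves the specific value $t^2 = s_2/(d + s_2)$ type critical point; the square root is the usual artifact of passing through the self-dual product identity~\eqref{eq:mahler_volume_alike}, which lets one replace a bound on $\int\sell[s_1]{}/\int\sell[s_2]{}$ by the geometric mean of that bound and its reciprocal analogue for the $\shf{}$'s. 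Carrying out this optimization carefully, and checking that the resulting $\gamma \le 1$ so that monotonicity of $\psi_{s_1}$ indeed gives feasibility, is the only genuinely computational part; everything else is bookkeeping with~\eqref{eq:s_outer_volume}, \eqref{eq:psi_st_form}, \eqref{eq:ell_monotonic_in_s}, and the minimality defining $\soelldense{f}$.
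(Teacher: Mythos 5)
Your argument for the left inequality is correct and is, modulo a normalization, the paper's: feasibility of $\soelldense[s_1]{f}$ in the $s_2$-problem follows from~\eqref{eq:ell_monotonic_in_s}, and then~\eqref{eq:s_outer_volume} gives the ratio.

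For the right inequality, the overall plan is sound: find $\gamma$ and a height $c$ such that $f\le\soelldense[s_2]{f}=\sell[s_2]{\ball{d+1}}$ implies $f\le\sell[s_1]{(\gamma\id\oplus c,0)}$, then read off the integral ratio $c/\gamma^d$. But the sketch of how to verify the pointwise bound is where the gap lies. The observation that $u\le u^{s_1/s_2}$ for $u\in[0,1]$ only reproves the monotonicity $\shf[s_2]{\ball{d+1}}\le\shf[s_1]{\ball{d+1}}$; it compares the two profiles at the \emph{same} ellipsoid and therefore cannot produce the factor $c/\gamma^d>1$ you need (if anything it gives the trivial inequality in the wrong direction for your purposes). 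The paper's mechanism is genuinely geometric and one sided: it shows $\shf[s_1]{\upthing{E}_\rho}\le\shf[s_2]{\ball{d+1}}$ for the scaled ellipsoid $\upthing{E}_\rho=(\rho\id\oplus(\sqrt{1-\rho^2})^{-s_2},0)$, using only the inclusion of the cylinder $\rho\ball{d}\times\bigl[0,\sqrt{1-\rho^2}\bigr]$ in $\ball{d+1}$, so that
\[
\shf[s_1]{\upthing{E}_\rho}\le\bigl(\sqrt{1-\rho^2}\bigr)^{s_2}\chi_{\rho\ball{d}}
=\bigl(\sqrt{1-\rho^2}\,\chi_{\rho\ball{d}}\bigr)^{s_2}\le\bigl(\shf[1]{\ball{d+1}}\bigr)^{s_2}=\shf[s_2]{\ball{d+1}}.
\]
Note that the exponent on the cylinder step is $s_2$, not $s_1$, which is why the final constant involves only $d$ and $s_2$.

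Two further points in your sketch are off. The square root in the constant is not an artifact of~\eqref{eq:mahler_volume_alike}: it comes directly from the $1/2$ power in $\sqrt{1-\rho^2}$, since the integral bound is $(1-\rho^2)^{-s_2/2}\rho^{-d}\,\ovolbs[s_1]$ and one then chooses the optimal $\rho=\sqrt{d/(d+s_2)}$ (so the relevant critical quantity is $\rho^2=d/(d+s_2)$, not $t^2=s_2/(d+s_2)$). And the constraint to check is not "$\gamma\le 1$ so monotonicity gives feasibility"; feasibility is exactly the pointwise inequality $\psi_{s_1}(\gamma t)\le\psi_{s_2}(t)+\ln c$, and the role of $\rho$ (your $\gamma$) is to be \emph{optimized}, not merely to stay below $1$.
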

\begin{proof}
	We prove the leftmost inequality in~\eqref{eq:integral_comparison} first.
	Without loss of generality, assume that $\sell[s_1]{\ball{d+1}}$ is the L\"owner $s_1$-function of $f$.
	By~\eqref{eq:ell_monotonic_in_s}, we have that $f  \leq \sell[s_2]{\ball{d+1}}.$ 
	Hence, $\int_{\R^d} \soelldense[s_2]{f} \leq \ovolbs[s_2].$
	The leftmost inequality in~\eqref{eq:integral_comparison} follows.

	Now, we prove  the rightmost inequality in~\eqref{eq:integral_comparison}.
	Assume that $\sell[s_2]{\ball{d+1}}$ is the L\"owner $s_2$-function of $f$.
	For a fixed $\rho \in (0,1)$, consider
	\[
		\upthing{E}_{\rho}=
		\left(\rho \cdot \id \oplus \left({\sqrt{1-\rho^2}}\right)^{- s_2},0\right).
	\]
	We claim that $ \sell[s_2]{\ball{d+1}} \leq \sell[s_1]{\upthing{E}_{\rho}}$ for all $\rho\in(0,1).$ 
	By~\eqref{eq:polarity_reverse} and by~\eqref{eq:duality_l_j}, it is equivalent to the inequality
		$ \shf[s_1]{\upthing{E}_{\rho}} \leq \shf[s_2]{\ball{d+1}}.$
	Since for any $\rho\in(0,1)$ the cylinder 
	$\rho\ball{d} \times \left[0,\sqrt{1-\rho^2}\right]$ is contained in $\ball{d+1},$ 
	we conclude that 
	\[
		\shf[s_1]{\upthing{E}_{\rho}} \leq 
			\left(\sqrt{1 - \rho^2}\right)^{s_2} \cdot  \chi_{\rho\ball{d}} =
			\left(\sqrt{1 - \rho^2} \cdot  \chi_{\rho\ball{d}} \right)^{s_2}
			\leq \left(\shf[1]{\ball{d+1}}\right)^{s_2} =
		\shf[s_2]{\ball{d+1}}.
	\]
	Thus, $\sell[s_2]{\ball{d+1}} \leq \sell[s_1]{\upthing{E}_{\rho}}$ for all $\rho\in(0,1).$
	Therefore, $f \leq \sell[s_1]{\upthing{E}_{\rho}}$
	and $\int_{\R^d} \soelldense[s_1]{f} \leq \int_{\R^d} \sell[s_1]{\upthing{E}_{\rho}}$.
	Using~\eqref{eq:s_outer_volume} with $\rho = \sqrt{\frac{d}{d+s_2}},$ we obtain
	\[
		\frac{\int_{\R^d} \soelldense[s_1]{f}}
			{\int_{\R^d} \soelldense[s_2]{f}}
		= \frac{\int_{\R^d} \soelldense[s_1]{f}}
			{\int_{\R^d} \sell[s_2]{\ball{d+1}}} 	
		\leq \frac{\int_{\R^d} \sell[s_1]{\upthing{E}_{\rho}}}
				{\int_{\R^d} \sell[s_2]{\ball{d+1}}}
		= \sqrt{\left(\frac{d+s_2}{s_2}\right)^{s_2}\left(\frac{d+s_2}{d}\right)^d}
			\cdot	
				\frac{\ovolbs[s_1]}{\ovolbs[s_2]}.
	\]
\end{proof}

\subsection{The limit as \texorpdfstring{$s \to 0$ }{s --> 0}}

In this subsection, we prove Theorem~\ref{thm:zero_limit}.

\begin{proof}[Proof of Theorem~\ref{thm:zero_limit}]
	Let the L\"owner $s$-function $\soelldense{f}$ be represented by
	$(A_s\oplus\alpha_s, a_s)\in\mathcal{E}$ for every $s\in [0,1]$.
	Clearly, it suffices to show that 
		\begin{equation}\label{eq:lim_s_0}
			\lim_{s \to 0+} (A_s\oplus\alpha_s, a_s)
			= (A_0\oplus\alpha_0, a_0).
		\end{equation}
	Assume the contrary. 
	By Lemma~\ref{lem:integral_comparison},
	we see that the integrals of $\soelldense{f}$ for $s \in [0,1]$
	are bounded from above by some finite constant. 
	By monotonicity, $\left\{\ovolbs\right\}_{s\in[0,1]} \subset \left[\ovolbs[0], \ovolbs[1]\right]$ for any $d\in\N$. 
	Thus, applying Lemma~\ref{lem:psi_boundedness} for 
	$\psi= \psi_s,$ $s\in [0,1],$ we get that  the set 
	$\left\{(A_s\oplus\alpha_s, a_s)\right\}_{s \in [0,1]}$ is bounded in $\ellipsd.$ 
	Now, we see that there exists a sequence of positive numbers $\{s_i\}_1^{\infty}$
	with $\lim\limits_{i \to \infty} s_i = 0$ 
	such that
	\[
		\lim_{i \to \infty} (A_{s_i} \oplus \alpha_{s_i}, a_{s_i})
		= (A \oplus \alpha, a) \in \ellipsd 
		\quad \text{and}\quad  
		(A \oplus \alpha, a) \neq (A_0 \oplus \alpha_0, a_0).
	\]
	Clearly, we have that 
	\begin{equation}\label{eq:f_limit_comp}
		f \leq \sell[0]{(A\oplus\alpha, a)} \quad \text{and}
		\quad
		\int_{\R^d} \sell[0]{{(A \oplus \alpha, a)}}
		= \lim_{ i \to \infty}\int_{\R^d} \soelldense[s_i]{f}.
	\end{equation}
	By~\eqref{eq:ell_monotonic_in_s}, 
	we have $f \leq \sell{(A_0\oplus\alpha_0, a_0)}$ for any positive $s.$
	This and~\eqref{eq:f_limit_comp} imply that 
	\[
		\int_{\R^d} \sell[0]{( A \oplus \alpha, a)}
		= \lim\limits_{ i \to \infty}\int_{\R^d} \soelldense[s_i]{f}
		\leq \lim\limits_{ i \to  \infty} 
		\int_{\R^d}\sell[s_i]{(A_0 \oplus \alpha_0, a_0)}
		=\int_{\R^d} \soelldense[0]{f}.
	\]
	This contradicts the choice of $\soelldense[0]{f}.$
\end{proof}

\section{Gaussian densities. The limit as \texorpdfstring{$s \to \infty$ }{s --> infty}}\label{sec:gaussians}

In this Section, we prove Theorem~\ref{thm:lowner_infinity_existence}.
That is, we show that the limit of $\psi_s$-function as $s$ tends to $\infty$
may be only the gaussian distribution, which motivates our choice of the function $\psi_{\infty}$.

\subsection{Basic identities}

For any positive-definite operator $A,$ we have
\begin{equation}\label{eq:polar_gaussian}
	\loglego{\left(e^{-\enorm{A^{-1} x}^2}\right)} = 
	e^{-\frac{1}{4}\enorm{{A x}}^2}.
\end{equation}
By direct computations, we find  that
\begin{equation}\label{eq:limit_psi_at_infinity}
	\lim\limits_{s \to \infty} 
	\psi_s\!\left(\sqrt{{s}} \cdot t \right) = \frac{t^2}{2} 
	\quad \text{for any}\ t \in [0, + \infty).
\end{equation}
This implies that the functions 
$\sell{(c(s)I \oplus 1,0)},$ where $c(s) = \sqrt{s},$ converge to 
$e^{-{\enorm{x}^2}/2}$ as $s \to \infty.$
By this and using   assertion~\eqref{ass:lem_conv_dual_convergence_integrals}
of Lemma~\ref{lem:convergence_dual}, we have
\begin{equation}\label{eq:osvol_limit_s_infinity}
	\lim\limits_{s \to \infty}    \ovolbs \cdot s^{-d/2}= (2\pi)^{d/2}.
\end{equation}

As an immediate consequence of Lemma~\ref{lem:psi_boundedness} and 
identity~\eqref{eq:osvol_limit_s_infinity}, we obtain
\begin{cor}\label{cor:boundness_in_infinity}
	Let $f\colon\Red\to[0,\infty)$ be a proper log-concave function such that
	there is a sequence $\{s_i\}_1^{\infty}$ of positive numbers with 
	$\lim\limits_{i \to \infty} s_i = \infty$ and
	\begin{equation*}
		\lim\limits_{ i \to \infty}\int_{\R^d} \soelldense[s_i]{f} = \lambda < \infty.
	\end{equation*}
	Further, let $(A_s \oplus \alpha_s, a_s)$
	represent the L\"owner $s$-function of $f,$ $s \in [0, \infty).$
	Then inequality 
	\[
		\rho_1 \cdot \id  \prec \frac{A_{s_i}}{\sqrt{s_i}} \prec \rho_2 \cdot \id. 
	\]
	holds for some positive $\rho_1$, $\rho_2$ and all $i\in\N$.
\end{cor}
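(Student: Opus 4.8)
The plan is to feed the admissible functions $\psi=\psi_{s_i}$ into Lemma~\ref{lem:psi_boundedness} and then combine its conclusion with the known behaviour of $\ovolbs$ (and of its one‑dimensional analogue) as $s\to\infty$. First, since $\int_{\R^d}\soelldense[s_i]{f}\to\lambda<\infty$, the sequence of these integrals is bounded, say by some $\delta<\infty$. Each $\psi_s$, $s\in[0,\infty)$, is an admissible function with $\psi_s(0)=0$, so Lemma~\ref{lem:psi_boundedness}, applied to $f$ and $\delta$, provides constants $\mu_1,\mu_2>0$ depending only on $f$, $d$ and $\delta$ — crucially, not on $s$. Because $f\leq\soelldense[s_i]{f}$ and $\int_{\R^d}\soelldense[s_i]{f}\leq\delta$, estimate~\eqref{eq:comparison_operator} with $\psi=\psi_{s_i}$ gives
\[
	\mu_1\frac{\ovolbs[s_i]}{(\lambda_1[\psi_{s_i}])^{d-1}}\cdot\id\ \prec\ A_{s_i}\ \prec\ \mu_2\,\lambda_1[\psi_{s_i}]\cdot\id ,
\]
where $\lambda_1[\psi_{s_i}]=\int_{\R}e^{-\psi_{s_i}(\enorm{t})}\di t$ and $\ovolbs[s_i]=\lambda_d[\psi_{s_i}]$.

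Next I would record the growth rates of the quantities appearing here. The number $\lambda_1[\psi_s]$ is precisely $\ovolbs$ computed in dimension one, so identity~\eqref{eq:osvol_limit_s_infinity}, used in dimensions $d$ and $1$, yields $\ovolbs\cdot s^{-d/2}\to(2\pi)^{d/2}$ and $\lambda_1[\psi_s]\cdot s^{-1/2}\to(2\pi)^{1/2}$. Hence there are $0<c_1\le c_2$ and $S_0>0$ with $c_1 s^{d/2}\le\ovolbs\le c_2 s^{d/2}$ and $c_1 s^{1/2}\le\lambda_1[\psi_s]\le c_2 s^{1/2}$ for all $s\ge S_0$; in particular $\ovolbs/(\lambda_1[\psi_s])^{d-1}\ge (c_1/c_2^{d-1})\,s^{1/2}$. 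Dividing the displayed two‑sided bound by $\sqrt{s_i}$, for every $i$ with $s_i\ge S_0$ we obtain $\rho_1'\cdot\id\prec A_{s_i}/\sqrt{s_i}\prec\rho_2'\cdot\id$ with $\rho_1'=\mu_1 c_1/c_2^{d-1}$ and $\rho_2'=\mu_2 c_2$.

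Finally, I would absorb the finitely many small indices. Since $s_i\to\infty$, only finitely many $i$ satisfy $s_i<S_0$, and for each such $i$ the matrix $A_{s_i}/\sqrt{s_i}$ is a fixed positive‑definite matrix. Choosing $\rho_1>0$ smaller than $\rho_1'$ and than the least eigenvalue of $A_{s_i}/\sqrt{s_i}$ over these finitely many indices, and $\rho_2$ larger than $\rho_2'$ and than the largest such eigenvalue, gives $\rho_1\cdot\id\prec A_{s_i}/\sqrt{s_i}\prec\rho_2\cdot\id$ for all $i\in\N$, as required.

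I do not anticipate a genuine obstacle — the statement is flagged as an immediate corollary. The two points that need a little care are that Lemma~\ref{lem:psi_boundedness} must be invoked so that its constants are uniform in $s$ (which they are, since they depend only on $f$ and $\delta$), and the routine separation of the finitely many indices with $s_i<S_0$ at the end; everything else is plugging the asymptotics of $\ovolbs$ and $\lambda_1[\psi_s]$ into the sandwiched estimate.
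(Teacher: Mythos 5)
Your proposal is correct and follows exactly the route the paper intends: the paper states this corollary as ``an immediate consequence of Lemma~\ref{lem:psi_boundedness} and identity~\eqref{eq:osvol_limit_s_infinity}'' with no further detail, and your argument fills in precisely those details — the uniformity in $s$ of the constants $\mu_1,\mu_2$, the observation that $\lambda_1[\psi_s]$ is the one-dimensional $\ovolbs$ so that both quantities in~\eqref{eq:comparison_operator} scale like $\sqrt{s}$, and the absorption of the finitely many indices with $s_i$ small. Nothing to change.
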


Using~\eqref{eq:duality_l_j} in assertion~\eqref{ass:lem_conv_dual_convergence_itself} of Lemma~\ref{lem:convergence_dual},
we conclude that
$\shf{\left(\sqrt{s} \cdot \id \oplus 1, 0 \right)}(x) \to e^{-\enorm{x}^2/2}.$
Again, by assertion~\eqref{ass:lem_conv_dual_convergence_integrals} of Lemma~\ref{lem:convergence_dual}, we get
\begin{equation*}
	\lim\limits_{s \to \infty} s^{d/2}   \int_{\R^d} \shf{\ball{d+1}} =   (2\pi)^{d/2}.
\end{equation*}
Thus, we conclude that
\begin{equation}\label{eq:osvol_volbs_limit}
	\lim\limits_{s \to \infty}    \ovolbs \cdot \int_{\Red} \shf{\ball{d+1}} =   (2\pi)^{d}.
\end{equation}
For any $(A \oplus \alpha, a) \in \ellipsd,$ identity~\eqref{eq:s_outer_volume} gives
\begin{equation}\label{eq:outer gaussian_volume}
	\int_{\Red} \sell[\infty]{(A \oplus \alpha, a)}  
	= \alpha \frac{\pi^{d/2}}{\det A}.
\end{equation}

\subsection{Limits of centered sequences}

\begin{lemma}\label{lem:limit_lowner_s-ell_infinity}
	Let $\{s_i\}_1^{\infty}$ be a sequence of positive scalars such that 
	$\lim\limits_{i \to \infty} s_i = \infty $,
	let $\{A_i\}_1^{\infty}$ be a sequence of positive-definite operators
	with
	$\lim\limits_{i \to \infty} \frac{A_i}{\norm{A_i}} = A,$
	where $A$ is positive-definite,
	and let the ellipsoids $\upthing{E}_{i},$
	represented by $(A_i \oplus 1, 0)$
	satisfy  
	$\lim\limits_{i \to \infty} \int_{\R^d} \sell[s_i]{\upthing{E}_i} =  \lambda < \infty.$ 
	Then the functions $\left\{\sell[s_i]{\upthing{E}_i}\right\}$
	converge uniformly on $\R^d$ to the Gaussian density $\sell[\infty]{(A_L \oplus 1, 0)},$ where
	\begin{equation}\label{eq:operator_inf_coef}
		A_L = \frac{\sqrt{\pi}}{\left(\lambda \det A \right)^{1/d}} A.
	\end{equation}
\end{lemma}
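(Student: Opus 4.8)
The plan is to transfer the problem to the convex exponents. Write $\sell[s_i]{\upthing{E}_i}=e^{-\phi_i}$ with $\phi_i(x):=\psi_{s_i}(\enorm{A_i x})$; since $\psi_s(0)=0$ for every $s$, each $\phi_i$ is convex and finite on $\Red$ and vanishes at the origin. We assume $\lambda>0$, as otherwise $A_L$ in \eqref{eq:operator_inf_coef} is not defined. Three steps remain: (i) show $A_i/\sqrt{s_i}$ converges to a definite positive-definite operator; (ii) deduce that $\phi_i$ converges, pointwise and then locally uniformly, to $\enorm{A_L\,\cdot\,}^2$; (iii) upgrade (ii) to uniform convergence on all of $\Red$ by a tail estimate. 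I expect step (i) to be the main obstacle, since the limiting operator $A_L$ has to be recovered from the single scalar $\lambda$ together with the normalized limit $A_i/\norm{A_i}\to A$.

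For (i): by \eqref{eq:s_outer_volume} with $\alpha=1$ we have $\int_{\Red}\sell[s_i]{\upthing{E}_i}=\ovolbs[s_i]/\det A_i$, so the hypothesis gives $\ovolbs[s_i]/\det A_i\to\lambda$; dividing by the asymptotics \eqref{eq:osvol_limit_s_infinity}, $\ovolbs[s_i]\,s_i^{-d/2}\to(2\pi)^{d/2}$, yields $\det(A_i)\,s_i^{-d/2}\to(2\pi)^{d/2}/\lambda$. Since also $\det(A_i/\norm{A_i})\to\det A>0$ and $\det(A_i)\,s_i^{-d/2}=(\norm{A_i}/\sqrt{s_i})^d\det(A_i/\norm{A_i})$, taking $d$-th roots gives $\norm{A_i}/\sqrt{s_i}\to c:=\bigl((2\pi)^{d/2}/(\lambda\det A)\bigr)^{1/d}$, and therefore
\[
	\frac{A_i}{\sqrt{s_i}}=\frac{\norm{A_i}}{\sqrt{s_i}}\cdot\frac{A_i}{\norm{A_i}}\ \longrightarrow\ cA=:B .
\]
A direct check shows $B=\sqrt{2}\,A_L$ with $A_L$ as in \eqref{eq:operator_inf_coef}, hence $\tfrac12\enorm{Bx}^2=\enorm{A_Lx}^2$ and $\sell[\infty]{(A_L\oplus1,0)}=e^{-\enorm{A_L\,\cdot\,}^2}$ (which, consistently, has integral $\lambda$).

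For (ii): fix $x$. From $\phi_i(x)=\psi_{s_i}\!\bigl(\sqrt{s_i}\,\enorm{(A_i/\sqrt{s_i})x}\bigr)$ and $\enorm{(A_i/\sqrt{s_i})x}\to\enorm{Bx}$, identity \eqref{eq:limit_psi_at_infinity} — strengthened to the statement that $\psi_{s_i}(\sqrt{s_i}\,t_i)\to t_0^2/2$ whenever $t_i\to t_0$, which follows because the convex functions $t\mapsto\psi_s(\sqrt{s}\,t)$ converge pointwise to the continuous limit $t\mapsto t^2/2$ and hence locally uniformly — gives $\phi_i(x)\to\tfrac12\enorm{Bx}^2=\enorm{A_Lx}^2$. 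As the $\phi_i$ are finite convex functions on $\Red$ converging pointwise to the continuous function $x\mapsto\enorm{A_Lx}^2$, this convergence is automatically locally uniform, and exponentiating gives $\sell[s_i]{\upthing{E}_i}\to\sell[\infty]{(A_L\oplus1,0)}$ locally uniformly on $\Red$.

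For (iii): let $m:=\lambda_{\min}(A_L)>0$ (positive because $A_L=cA$ with $A$ positive-definite), so $\enorm{A_Lx}^2\ge m^2\enorm{x}^2$. Given $\epsilon>0$, pick $R$ with $m^2R^2\ge\ln(2/\epsilon)$; then $\sell[\infty]{(A_L\oplus1,0)}\le\epsilon/2$ on $\{\enorm{x}\ge R\}$. By (ii), $\phi_i\to\enorm{A_L\,\cdot\,}^2$ uniformly on the sphere $\{\enorm{x}=R\}$, so $\phi_i\ge\ln(2/\epsilon)$ there for all large $i$; then convexity and $\phi_i(0)=0$ give $\phi_i(x)\ge(\enorm{x}/R)\,\phi_i\bigl((R/\enorm{x})x\bigr)\ge\ln(2/\epsilon)$ for all $\enorm{x}\ge R$, so $\sell[s_i]{\upthing{E}_i}\le\epsilon/2$ there as well. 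Combined with the uniform convergence from (ii) on the compact ball $\{\enorm{x}\le R\}$, this yields $\norm{\sell[s_i]{\upthing{E}_i}-\sell[\infty]{(A_L\oplus1,0)}}\to0$, which is the assertion. Besides step (i), the only genuinely structural input is the positive-definiteness of $A_L$ (hence of $A$) used in (iii), which forces the exponents $\phi_i$ to grow at least linearly away from the origin, uniformly in $i$.
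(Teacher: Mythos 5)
Your proof is correct, but it takes a genuinely different and more self-contained route than the paper's. The paper passes to the dual functions $\shf[s_i]{\upthing{E}_i}$ via \eqref{eq:osvol_volbs_limit}, cites an external result (Lemma~8.5 of~\cite{ivanov2020functional}) for the uniform convergence of those dual functions to a Gaussian, then dualizes back through \eqref{eq:polar_gaussian} and assertion~\eqref{ass:lem_conv_dual_convergence_itself} of Lemma~\ref{lem:convergence_dual} to get locally uniform convergence, and finally invokes the convergence of integrals to upgrade to uniform convergence on $\R^d$. You work entirely on the primal side: you extract $A_i/\sqrt{s_i}\to cA$ directly from the asymptotics~\eqref{eq:osvol_limit_s_infinity} and the hypothesis on the integrals (a computation the paper's citation subsumes), obtain locally uniform convergence of the convex exponents from the standard fact that pointwise convergence of finite convex functions to a finite continuous limit is locally uniform, and then close the tail explicitly with the convexity bound $\phi_i(tx)\ge t\,\phi_i(x)$ for $t\ge 1$ together with $\phi_i(0)=0$. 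Your approach buys elementarity and independence from the cited lemma, and it makes the final ``uniform on $\R^d$'' step fully explicit where the paper's version is terse; the paper's approach is shorter modulo the citation and displays the duality structure of the whole construction more prominently. One cosmetic nit: in step~(iii) you should take $R$ with $m^2R^2>\ln(2/\epsilon)$ strictly (or add a constant), since uniform convergence on the sphere only gives $\phi_i\ge\enorm{A_L\,\cdot\,}^2-\delta$ there for large $i$; this is trivially fixed. You are also right to flag that $\lambda>0$ must hold for \eqref{eq:operator_inf_coef} to make sense — the paper leaves this implicit, and it is indeed satisfied at the points where the lemma is applied.
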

\begin{proof}
	Consider the functions $\left\{\shf[s_i]{\upthing{E}_i}\right\}_{1}^{\infty}.$ 
	By~\eqref{eq:osvol_volbs_limit}, we have that 
	\[
		\lim\limits_{i \to \infty}  \int_{\R^d} \shf[s_i]{\upthing{E}_i} = 
		\frac{(2 \pi)^d}{\lambda}.
	\]
	As shown in~\cite[Lemma 8.5]{ivanov2020functional},
	the functions $\shf[s_i]{\upthing{E}_i}$
	converge uniformly on $\R^d$ to the Gaussian density~$e^{-\enorm{A_J^{-1}x}^2}$
	with 
	\[
		A_J = \frac{2\sqrt{\pi}}{(\lambda \det A)^{1/d}} A. 
	\]
	Thus, by~\eqref{eq:polar_gaussian} and by assertion~\eqref{ass:lem_conv_dual_convergence_itself} of Lemma~\ref{lem:convergence_dual},
	we get that the functions $\sell[s_i]{\upthing{E}_i}$
	converge locally uniformly to the Gaussian density $\sell[\infty]{(A_L\oplus 1,0)}$ with $A_L$ given by~\eqref{eq:operator_inf_coef}. 
	Since \[
	\lim\limits_{i \to \infty}  \int_{\R^d} \sell[s_i]{\upthing{E}_i} = 
	\int_{\R^d} \sell[\infty]{(A_L\oplus 1,0)},\]
	we  conclude that 
	the functions $\left\{\sell[s_i]{\upthing{E}_i}\right\}$ are uniformly convergent on $\R^d$.
\end{proof}
\begin{lemma}\label{lem:lowner_gaussian_approx}
	Let $G$ be a Gaussian density. Then
	the functions $\soelldense{G}$ converge uniformly on $\R^d$ to $G$ as 
	$s \to \infty.$
\end{lemma}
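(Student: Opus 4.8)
\textbf{Strategy.} The plan is to reduce the statement to Lemma~\ref{lem:limit_lowner_s-ell_infinity} by showing that (a) the integrals of $\soelldense{G}$ stay bounded as $s\to\infty$, so that the boundedness machinery (Corollary~\ref{cor:boundness_in_infinity}) applies; (b) along any subsequence $s_i\to\infty$ the representing ellipsoids converge after the natural rescaling by $\sqrt{s_i}$; and (c) the limiting Gaussian density must equal $G$ itself, by an optimality/minimality argument against the competitor $G$ — i.e.\ using that $G$ is admissible for problem~\eqref{eq:s_infinity_problem} and that the limit is a solution of that problem, together with the uniqueness asserted by Theorem~\ref{thm:lowner_infinity_existence}.

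\textbf{Step 1: uniform integral bound.} Since $G$ is a Gaussian density, it lies in $\funclass{\psi_\infty}$, so by Theorem~\ref{thm:lowner_infinity_existence} (the equivalence of \ref{ass:thm:lowner_infinity_liminf} and \ref{ass:thm:lowner_infinity_exists_any}) we have $\liminf_{s\to\infty}\int_{\R^d}\soelldense{s}{G}<\infty$. In fact I want the full limsup bounded: write $G=\sell[\infty]{(A_G\oplus 1,0)}$ after translating so the center is at the origin; by~\eqref{eq:limit_psi_at_infinity} the functions $\sell[s]{(\sqrt{s}\,A_G\oplus 1,0)}$ converge pointwise (indeed uniformly, after the computation in Step 2) to $G$ as $s\to\infty$, and by~\eqref{eq:ell_monotonic_in_s} one checks $G\le \sell[s]{(\sqrt{s}\,A_G\oplus 1,0)}$ is \emph{not} automatic, so instead I use that for each large $s$ one can slightly enlarge the representing operator to a $\gamma_s\sqrt{s}\,A_G$ with $\gamma_s\to 1$ so that $G\le \sell[s]{(\gamma_s\sqrt{s}\,A_G\oplus 1,0)}$; by~\eqref{eq:s_outer_volume} and~\eqref{eq:osvol_limit_s_infinity} the integrals of these competitors converge to $\int_{\R^d}G$, whence $\limsup_{s\to\infty}\int_{\R^d}\soelldense{s}{G}\le\int_{\R^d}G$.

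\textbf{Step 2: subsequential limit and identification.} Fix any sequence $s_i\to\infty$. By Step~1 and Corollary~\ref{cor:boundness_in_infinity}, writing $(A_{s_i}\oplus\alpha_{s_i},a_{s_i})$ for the representing data of $\soelldense{s_i}{G}$, the operators $A_{s_i}/\sqrt{s_i}$ lie in a fixed compact set of positive-definite operators, the heights $\alpha_{s_i}$ are bounded (Lemma~\ref{lem:psi_boundedness}), and the centers $a_{s_i}$ are bounded; passing to a further subsequence we may assume $A_{s_i}/\|A_{s_i}\|\to A$ positive-definite, $\alpha_{s_i}\to\alpha>0$, $a_{s_i}\to a$, and $\int_{\R^d}\soelldense{s_i}{G}\to\lambda\le\int_{\R^d}G$. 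After absorbing $\alpha_{s_i}$ and $a_{s_i}$ by translation and scaling, Lemma~\ref{lem:limit_lowner_s-ell_infinity} gives that $\soelldense{s_i}{G}$ converges uniformly on $\R^d$ to a Gaussian density $H\in\funclass{\psi_\infty}$ with $\int_{\R^d}H=\lambda$. Since $G\le\soelldense{s_i}{G}$ for every $i$ and the convergence is uniform, $G\le H$; thus $H$ is a competitor in problem~\eqref{eq:s_infinity_problem} for $G$. On the other hand $\lambda\le\int_{\R^d}G$ and $G$ is itself a competitor, so $H$ is a minimizer of~\eqref{eq:s_infinity_problem}; but by the uniqueness in assertion~\ref{ass:thm:lowner_infinity_exists_unique} of Theorem~\ref{thm:lowner_infinity_existence} (applicable since \ref{ass:thm:lowner_infinity_exists_any} holds with $G$) the minimizer is unique, and $G$ is plainly the minimizer of the problem ``minimize $\int\ell$ over $\ell\in\funclass{\psi_\infty}$ with $G\le\ell$'' — indeed $G\le G$ and nothing smaller works — so $H=G$. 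Since every subsequence has a sub-subsequence along which $\soelldense{s}{G}\to G$ uniformly, the whole family converges uniformly to $G$ as $s\to\infty$.

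\textbf{Main obstacle.} The delicate point is Step~1: establishing a \emph{uniform} (not merely liminf) bound $\limsup_{s\to\infty}\int_{\R^d}\soelldense{s}{G}\le\int_{\R^d}G$, which requires constructing the competitor operators $\gamma_s\sqrt{s}\,A_G$ with $\gamma_s\to 1$ and verifying $G\le\sell[s]{(\gamma_s\sqrt{s}\,A_G\oplus 1,0)}$ for all large $s$; this uses~\eqref{eq:limit_psi_at_infinity} together with a uniform (in $t$) comparison of $\psi_s(\sqrt{s}\,t)$ with $t^2/2$, e.g.\ via the monotonicity of $\shf[s]{\ball{d+1}}(x)$ in $s$ from~\eqref{eq:ell_monotonic_in_s}. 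Everything else is a compactness-plus-uniqueness argument that runs on the results already proved.
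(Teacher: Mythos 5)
Your overall plan --- get a uniform bound $\limsup_{s\to\infty}\int_{\R^d}\soelldense{G}\leq\int_{\R^d}G$, extract a convergent subsequence of representing ellipsoids via Lemma~\ref{lem:psi_boundedness} and Corollary~\ref{cor:boundness_in_infinity}, apply Lemma~\ref{lem:limit_lowner_s-ell_infinity}, and identify the subsequential limit with $G$ --- is sound in outline and close in spirit to the paper's argument; the paper streamlines the compactness step by using the rotational symmetry of $G$ to write $\soelldense{G}=\sell{(\beta(s)\id\oplus\alpha(s),0)}$ from the outset, but both routes ultimately rest on Lemma~\ref{lem:limit_lowner_s-ell_infinity}. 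However, there are two genuine gaps in your proposal.

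First, \emph{circularity}. You invoke Theorem~\ref{thm:lowner_infinity_existence} twice: in Step~1, via the implication \eqref{ass:thm:lowner_infinity_exists_any}$\,\Rightarrow\,$\eqref{ass:thm:lowner_infinity_liminf}, and in Step~2, via \eqref{ass:thm:lowner_infinity_exists_any}$\,\Rightarrow\,$\eqref{ass:thm:lowner_infinity_exists_unique}. But in the paper both of those implications are proved \emph{using} Lemma~\ref{lem:lowner_gaussian_approx}, so neither may be used to prove it. The Step~1 use is harmless, since you immediately discard it in favor of the stronger $\limsup$ bound; the Step~2 use must be eliminated. Fortunately the fix is elementary and needs no uniqueness theorem: once you have a Gaussian $H$ with $G\leq H$ pointwise and $\int_{\R^d}H\leq\int_{\R^d}G$, the non-negativity of $H-G$ together with $\int_{\R^d}(H-G)\leq 0$ forces $H=G.$ (Alternatively, uniqueness of the minimizer of~\eqref{eq:s_infinity_problem} is already available from Theorem~\ref{thm:psi_lowner} applied to $\psi_\infty(t)=t^2$, which is independent of the present lemma.)

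Second, \emph{the limsup bound is not actually proved}. You correctly observe that $G\leq\sell{(\sqrt{s}\,\id\oplus 1,0)}$ does not follow from~\eqref{eq:ell_monotonic_in_s}, because there the ellipsoid is fixed while here it scales with~$s$. You then posit replacement operators $\gamma_s\sqrt{s}\,A_G$ with $\gamma_s\to 1$, but give neither a construction nor a reason why such $\gamma_s$ converging to~$1$ must exist. As written, nothing you have established rules out that $\psi_s(\sqrt{s}\,t)>t^2/2$ for some pairs $(s,t)$, in which case $\gamma_s=1$ fails and it is not clear a priori that a smaller $\gamma_s\to 1$ suffices uniformly in~$t$. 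The paper closes precisely this gap by a direct computation: it shows $s\mapsto\psi_s(\sqrt{s}\,t)$ is increasing on $[1,\infty)$ for every fixed $t\geq 0$ (differentiate in $s$, reduce to a one-variable function of $z=t^2/s$, and check positivity of its derivative), whence $\psi_s(\sqrt{s}\,t)\leq\lim_{s'\to\infty}\psi_{s'}(\sqrt{s'}\,t)=t^2/2$ by~\eqref{eq:limit_psi_at_infinity}, giving $G\leq\sell{(\sqrt{s}\,\id\oplus 1,0)}$ for all $s\geq 1$ with $\gamma_s\equiv 1$. Your ``main obstacle'' paragraph correctly identifies this as the delicate point, but flagging a gap is not the same as filling it.
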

\begin{proof}[Proof of Lemma~\ref{lem:lowner_gaussian_approx}]
	We assume that
	$G(x) = e^{-\enorm{x}^2/2}.$
	First, we relax the condition and prove
	that it suffices to approximate $G$ by any suitable $s$-ellipsoidal functions.
	\begin{claim}\label{claim:convenient_approx_gauss_lowner}
		If there is a function $c\colon [1, \infty) \to [1, \infty)$  such that  
		\begin{equation}\label{eq:convergence_l_standard_gaussian}
			\lim\limits_{s \to \infty}  \int_{\R^d} \sell{(c(s)I \oplus 1,0)} = 
			\int_{\R^d} G = \left(2 {\pi}\right)^{d/2},
		\end{equation}
		and  
		$G \leq \sell{(c(s)I \oplus 1,0)}$
		{for all}  $s \geq 1,$
		then the functions $\soelldense{G}$ converge uniformly on $\R^d$ to $G$ as 
		$s \to \infty.$
	\end{claim}
	\begin{proof}
		By Theorem~\ref{thm:s_lowner},
		the L\"owner function $\soelldense{G}$ exists and is unique for any positive $s$.
		By symmetry, we see
		that $\soelldense{G}$ is of the form 
		$\sell{\left(\beta(s)\id \oplus \alpha(s), 0 \right)},$
		where $\beta\colon [1, \infty) \to (0, \infty)$ and 
		$\alpha\colon [1, \infty) \to [1, \infty).$ By~\eqref{eq:convergence_l_standard_gaussian}, we obtain that
		\[
			\lim\limits_{s \to \infty}  \int_{\R^d} \soelldense{G} = \int_{\R^d} G.
		\]
		This implies that $\alpha(s) \to 1$ as $s \to \infty.$
		Hence, the functions
		$\soelldense{G}= \sell{\left(\beta(s)\id \oplus \alpha(s), 0 \right)}$
		converge uniformly on $\R^d$
		to the same function as the functions $\sell{\left(\beta(s)\id \oplus 1, 0 \right)}$
		as $s$ tends to $\infty$ (if the latter converges). 
		However, by Lemma~\ref{lem:limit_lowner_s-ell_infinity},
		the functions 
		$\sell{\left(\beta(s)\id \oplus 1, 0 \right)}$ 
		converge  uniformly on $\R^d$ to $G$ as $s \to \infty.$
	\end{proof}

	It is not hard to find a suitable function $c(s).$
	\begin{claim}\label{claim:explicit_approx_gauss_lowner}
		Let $c(s) = \sqrt{s}.$ Then
			$G \leq \sell{(c(s)I \oplus 1,0)}$
		{for all}  $s \geq 1$
		and identity~\eqref{eq:convergence_l_standard_gaussian} holds.
	\end{claim}
	\begin{proof}
		Identity~\eqref{eq:convergence_l_standard_gaussian} is an immediate consequence of~\eqref{eq:s_outer_volume} and~\eqref{eq:osvol_limit_s_infinity}.

		Inequality $G \leq \sell{(c(s)I \oplus 1,0)}$ is purely technical.  
		By~\eqref{eq:limit_psi_at_infinity}, for any $x \in \R^d,$ we have
		\[
			\lim\limits_{s \to \infty} \sell{(c(s)I \oplus 1,0)}(x) = G(x).
		\]
		We claim that
		$\sell{(c(s)I \oplus 1,0)}(x)$ 
		is a decreasing function of $s\in [1,\infty)$
		for any fixed $x \in \R^d.$  Or, equivalently, 
		that $\psi_s\! \left( \sqrt{s} \cdot t \right)$
		is an increasing function of $s\in [1,\infty)$
		for any fixed $t \in [0, \infty).$ 
		The derivative of  $\psi_s \left( \sqrt{s} \cdot t \right)$ as a function of $s$ is
		\[
			\frac{1}{2}\frac{t^2/s}{  \sqrt{1+ {t^2}/{s}}+ 1} - 
			\ln \left(\sqrt{1+\frac{t^2}{s}}+1\right) + \ln 2.
		\]
		It is a function of 
		$t^2/s,$ and we use $\Phi\!\left(t^2/s \right)$ to denote this function.
		Making the substitution $z = t^2/s$ and computing the derivative of 
		$\Phi(z)$,  we get that 
		\[
			\Phi'(z) = \frac{z}{4 \sqrt{z+1} \left(1 + \sqrt{z+1}\right)^2} > 0 .
		\]
		By this and since $\Phi(z) = 0,$ we conclude that 
		the derivative of  $\psi_s\!\left( \sqrt{s} \cdot t \right)$
		as a function of $s$ is positive for any $s \in [1, \infty).$ This completes the proof of the Claim.
	\end{proof}
	Lemma~\ref{lem:lowner_gaussian_approx} follows
	from Claims~\ref{claim:convenient_approx_gauss_lowner} and~\ref{claim:explicit_approx_gauss_lowner}.
\end{proof}

\subsection{Proof of Theorem~\ref{thm:lowner_infinity_existence}}

Implication \eqref{ass:thm:lowner_infinity_exists_unique}~$\Rightarrow$~\eqref{ass:thm:lowner_infinity_exists_any}
is trivial.
Implication \eqref{ass:thm:lowner_infinity_exists_any}~$\Rightarrow$~\eqref{ass:thm:lowner_infinity_liminf}
is a direct consequence of Lemma~\ref{lem:lowner_gaussian_approx}.

We proceed with implication \eqref{ass:thm:lowner_infinity_liminf}~$\Rightarrow$~\eqref{ass:thm:lowner_infinity_exists_unique}.
Let the L\"owner $s$-function $\soelldense{f}$
be represented by the ellipsoid $(A_s\oplus\alpha_s, a_s)\in\mathcal{E}$ for any $s \in [1, \infty).$
Applying Lemma~\ref{lem:psi_boundedness} with $\psi = \psi_s,$
one sees that there exists a minimizing sequence $\{s_i\}_1^{\infty}$ with 
$\lim\limits_{i \to \infty} s_i = \infty$
such that 
\begin{equation*}
	\lim\limits_{ i \to \infty} \int_{\R^d} \soelldense[s_i]{f}
	\to
	\liminf\limits_{s \to \infty} \int_{\R^d} \soelldense{f},
	\quad
	\frac{A_{s_i}}{\norm{A_{s_i}}} \to  A ,
	\quad
	\alpha_{s_i} \to \alpha 
	\quad \text{and} \quad
	a_{s_i}  \to a
\end{equation*}
for some positive semidefinite matrix $A\in\Re^{d\times d}$, a number 
$\alpha>0$ and $a\in\Red$
as $i$ tends to~$\infty.$
By Corollary~\ref{cor:boundness_in_infinity},
we conclude that $A$ is positive-definite.
Thus, by Lemma~\ref{lem:limit_lowner_s-ell_infinity},
we have that the functions $\soelldense[s_i]{f}$
converge uniformly on $\R^d$ to some Gaussian density $G.$
Clearly, $ f \leq G$ and $\int_{\R^d} G = \liminf\limits_{s \to \infty} \int_{\R^d} \soelldense{f}.$
Hence, by Theorem~\ref{thm:psi_lowner},
there exists a unique solution to problem~\eqref{eq:s_infinity_problem}.
We use $G_L$ to denote this solution.
By Lemma~\ref{lem:lowner_gaussian_approx}, we have
\[
	\int_{\R^d} G_L = \liminf\limits_{s \to \infty}  \int_{\R^d} \soelldense{G_L} 
	\geq \liminf\limits_{s \to \infty}  \int_{\R^d} \soelldense{f}  =
	\int_{\R^d} G.
\]
Thus, by the choice of $G_L,$ we conclude that $G_L = G.$ 
Again, by Lemma~\ref{lem:lowner_gaussian_approx}, we have that
\[
	\lim\limits_{s \to \infty} \int_{\R^d} \soelldense{f} =
	\liminf\limits_{s \to \infty} \int_{\R^d} \soelldense{f} = \int_{\R^d} G_L.
\]
Hence,
$ \frac{A_{s}}{\norm{A_{s}}} \to  A , \; \alpha_{s} \to \alpha $
and  $ a_{s} \to a $ as $s \to \infty.$
Indeed, otherwise using Lemma~\ref{lem:limit_lowner_s-ell_infinity},
we see
that there is another Gaussian density $G_2$ such that $f \leq G_2$ 
and $\int_{\R^d} G_2 = \int_{\R^d} G_L,$
which contradicts the choice of $G_L.$
Thus, we conclude that the functions $\soelldense{f}$ converge uniformly on $\R^d$ as $s \to \infty$ to $G_L$,
completing the proof of Theorem~\ref{thm:lowner_infinity_existence}.

\begin{prp}\label{prp:gaussian_volume_l}
	Let $K \subset \Red$ be a convex body containing the origing in its interior
	and let $\norm{\cdot}_{K}$ denote the gauge function of $K$,
	that is, $\norm{x}_{K}=\inf\{\lambda>0\st x\in\lambda K\}$.
	Let $A^{-1}\! \left(\ball{d} \right)$
	be the smallest volume origin-centered ellipsoid containing  $K,$
	where $A$ is a positive-definite matrix.
	Then the $\infty$-ellipsoidal function (Gaussian density)
	represented by $\left(A \oplus 1, 0 \right)$
	is the unique solution to~\eqref{eq:s_infinity_problem}
	with $f = e^{-\norm{x}_K^2}.$
\end{prp}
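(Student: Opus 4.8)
The plan is to reduce problem~\eqref{eq:s_infinity_problem} for the function $f = e^{-\norm{x}_K^2}$ to the classical minimality of the origin-centered L\"owner ellipsoid $A^{-1}\ball{d}$ of $K$, and to take uniqueness of the minimizer from Theorem~\ref{thm:lowner_infinity_existence}.

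First I would verify the hypotheses and exhibit one Gaussian density above $f$. Since $0 \in \inter{K}$ and $K$ is a convex body, the gauge $\norm{\cdot}_K$ is finite and continuous on $\Red$, so $f$ is continuous; and if $K \subseteq R\ball{d}$ then $\norm{x}_K \geq \enorm{x}/R$, so $f(x) \leq e^{-\enorm{x}^2/R^2}$ and $f$ is a proper log-concave function. Write $G_A$ for the $\infty$-ellipsoidal function $\sell[\infty]{(A\oplus 1,0)}$, i.e. $G_A(x) = e^{-\enorm{Ax}^2}$. The gauge of $A^{-1}\ball{d}$ is the map $x \mapsto \enorm{Ax}$, so the containment $K \subseteq A^{-1}\ball{d}$ is precisely the pointwise inequality $\enorm{Ax} \leq \norm{x}_K$ for all $x$, whence $f \leq G_A$. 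In particular a Gaussian density lying above $f$ exists, so by Theorem~\ref{thm:lowner_infinity_existence} problem~\eqref{eq:s_infinity_problem} has a unique solution; it therefore suffices to prove that $\int_{\Red}\ell \geq \int_{\Red}G_A$ for every $\ell \in \funclass{\psi_\infty}$ with $f \leq \ell$, since this makes $G_A$ a minimizer, hence the minimizer.

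So fix such an $\ell$. By the polar decomposition theorem it is represented by some $(B\oplus\alpha,a) \in \ellipsd$ with $B$ positive-definite, that is, $\ell(x) = \alpha e^{-\enorm{B(x-a)}^2}$; by~\eqref{eq:outer gaussian_volume} we have $\int_{\Red}\ell = \alpha\pi^{d/2}/\det B$, whereas $\int_{\Red}G_A = \pi^{d/2}/\det A$. Two observations finish the argument. Evaluating $f \leq \ell$ at $x = 0$ gives $1 = f(0) \leq \ell(0) = \alpha e^{-\enorm{Ba}^2} \leq \alpha$, so $\alpha \geq 1$. Next, by~\eqref{eq:psi} the inequality $f \leq \ell$ is equivalent to $\enorm{B(x-a)}^2 - \ln\alpha \leq \norm{x}_K^2$ for all $x$; replacing $x$ by $tx$ with $t > 0$, dividing by $t^2$, and letting $t \to +\infty$ yields $\enorm{Bx}^2 \leq \norm{x}_K^2$ for all $x$ (the left-hand side tends to $\enorm{Bx}^2$ since $\norm{\cdot}_K^2$ is positively $2$-homogeneous while the remaining terms of the quadratic are of lower order in $t$). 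In particular $\enorm{Bx} \leq 1$ whenever $x \in K$, i.e. $K \subseteq B^{-1}\ball{d}$, and the minimality of the L\"owner ellipsoid $A^{-1}\ball{d}$ gives $\vol{d}(A^{-1}\ball{d}) \leq \vol{d}(B^{-1}\ball{d})$, i.e. $\det B \leq \det A$. Combining this with $\alpha \geq 1$ yields $\int_{\Red}\ell = \alpha\pi^{d/2}/\det B \geq \pi^{d/2}/\det A = \int_{\Red}G_A$, as required.

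The argument is short; the only place that needs a little care is the passage from the pointwise bound $f \leq \ell$ to the containment $K \subseteq B^{-1}\ball{d}$ — the point being that $\norm{\cdot}_K^2$ is positively $2$-homogeneous while $\enorm{B(\cdot-a)}^2 - \ln\alpha$ is a genuine quadratic, so comparing the two at infinity isolates exactly the relation $\enorm{B\cdot}^2 \leq \norm{\cdot}_K^2$ between their quadratic parts. Everything else is bookkeeping with identity~\eqref{eq:outer gaussian_volume} and the classical minimality of the centered L\"owner ellipsoid, while uniqueness of the minimizer of~\eqref{eq:s_infinity_problem} is handed to us by Theorem~\ref{thm:lowner_infinity_existence}.
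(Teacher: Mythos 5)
Your argument is correct and is essentially the paper's own proof: both hinge on the homogeneity argument (scaling $x \mapsto tx$ in the inequality $\enorm{B(x-a)}^2 - \ln\alpha \leq \norm{x}_K^2$ and letting $t\to\infty$) to extract $K \subseteq B^{-1}\ball{d}$, then combine $\det B \leq \det A$ from minimality of the centered L\"owner ellipsoid with $\alpha \geq 1$ and the integral formula~\eqref{eq:outer gaussian_volume}. The only cosmetic difference is that you invoke Theorem~\ref{thm:lowner_infinity_existence} to supply existence and uniqueness of the minimizer, whereas the paper leaves the uniqueness step implicit ("the proposition now easily follows"), which would require chasing the equality case $\alpha = 1$, $\det B = \det A$ (hence $B = A$ by uniqueness of the L\"owner ellipsoid) and $a = 0$; your route is in fact cleaner on that point.
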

\begin{proof}
	Let $(A^\prime \oplus \alpha^\prime, a^\prime)\in\ellipsd$ be such that 
	$f  \leq \sell[\infty]{(A^\prime \oplus \alpha^\prime, a^\prime)}.$ 
	First, we show that $K \subset (A^\prime)^{-1}\ball{d}.$ Indeed, we have
	\[
		\iprod{A^\prime (x - a^\prime)}{A^\prime (x -a^\prime)} -
		\ln(\alpha^\prime)
		\leq \norm{x}_K^2 
	\]
	for every $x\in\Red$. Suppose for a contradiction that there is  
	$y \in K \setminus (A^\prime)^{-1} \ball{d}.$
	Consider $x=\vartheta y$, and 
	substitute into the previous inequality. We obtain 
	\[
		\vartheta^2 \enorm{A^\prime y}^2 - 
		2\vartheta
		\iprod{A^\prime a^\prime}{A^\prime y} + 
		\enorm{A^\prime a^\prime}^2 - 
		\ln(\alpha^\prime)
		\leq \vartheta^2\norm{y}_K^2 < \vartheta^2. 
	\]
	As $\enorm{A^\prime y} > 1,$ letting $\vartheta$ tend to infinity, 
	we obtain a contradiction. Thus, $ K \subset (A^\prime)^{-1}\ball{d}.$

	Hence, $\det A^\prime \leq \det A .$ On the other hand, 
	$\alpha^\prime \geq \norm{f}=1 .$
	 The proposition now easily follows from~\eqref{eq:outer gaussian_volume}.
\end{proof}

\section{Outer integral ratio}\label{sec:outer_integral_ratio}

The notion of the volume ratio 
was extended to the setting of log-concave functions in~\cite{alonso2018john}
and then in~\cite{ivanov2020functional} for John $s$-functions. 
For any $s \in [0, \infty),$ the \emph{$s$-integral ratio} of $f$
is defined by
\[
	\sintrat(f) = 
	\left( 
	\frac{\int_{\Red} f }{\int_{\Red} \selldense{f}}
	\right)^{1/d}.
\]
Corollary~1.3 of~\cite{alonso2018john} states
that there exists $\Theta > 0$
such that 
\[
	\sintrat[0](f) \leq \Theta \sqrt{d}
\]
for any proper log-concave function $f\colon \R^d \to [0,\infty)$
and any positive integer $d.$
As a simple consequence of this result,
the authors of~\cite{ivanov2020functional}
generalized this asymptotically tight bound to the John $s$-functions with fixed $s \in [0, \infty).$

However, the similar property of the L\"owner ellipsoid is not obtained in~\cite{li2019loewner}. 
For any $s \in [0, \infty),$
it is reasonable to define the \emph{outer $s$-integral ratio} of $f$ by
\[
	\sointrat(f) = 
	\left( 
	\frac{\int_{\Red} \soelldense{f} }{\int_{\Red}  f}
	\right)^{1/d}.
\]

Theorem~\ref{thm:s-outer_volume_ratio} is an immediate corollary of the following result and Lemma~\ref{lem:integral_comparison}.
\begin{thm}\label{thm:outer_integral_ratio}
	There exists $\Theta_0$
	such that for any positive integer $d$ and a proper log-concave function $f\colon \Red \to [0,\infty)$,
	the following inequality holds
	\[
		\sointrat[0](f) \leq  \Theta_0 
		\sqrt{d}.
	\]
\end{thm}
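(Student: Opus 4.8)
The plan is to import the John-type integral-ratio estimate for $f^{\circ}$ (Corollary~1.3 of~\cite{alonso2018john}, recalled just above) to the L\"owner side by polar duality, and to bridge the two integrals with the functional Blaschke--Santal\'o and Bourgain--Milman inequalities. The three ingredients are: (a) the L\"owner $0$-function of $f$ is polar to the John $0$-function of $f^{\circ}$, so that by~\eqref{eq:mahler_volume_alike} with $s=0$ the product $\int_{\Red}\soelldense[0]{f}\cdot\int_{\Red}\selldense[0]{f^{\circ}}$ equals the constant $c_d:=\int_{\Red}\sell[0]{\ball{d+1}}\cdot\int_{\Red}\shf[0]{\ball{d+1}}=d!\,\left(\vol{d}\ball{d}\right)^{2}$, which does not depend on $f$; (b) $\int_{\Red}\selldense[0]{f^{\circ}}\geq(\Theta\sqrt d)^{-d}\int_{\Red}f^{\circ}$ by~\cite[Cor.~1.3]{alonso2018john} applied to the proper log-concave function $f^{\circ}$; (c) $\int_{\Red}f\cdot\int_{\Red}f^{\circ}\geq c^{d}$ for some absolute $c>0$, by the functional Bourgain--Milman (reverse Santal\'o) inequality. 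Since $\sointrat[0]$ is translation invariant (both the numerator and the denominator are unchanged when $f$ is translated), it suffices to prove the bound for any one translate of $f$; combining (a)--(c),
\[
  \int_{\Red}\soelldense[0]{f}=\frac{c_d}{\int_{\Red}\selldense[0]{f^{\circ}}}\leq\frac{c_d(\Theta\sqrt d)^{d}}{\int_{\Red}f^{\circ}}\leq\frac{c_d(\Theta\sqrt d)^{d}}{c^{d}}\int_{\Red}f ,
\]
so $\sointrat[0](f)\leq c_d^{1/d}\,(\Theta/c)\sqrt d$, and since $c_d^{1/d}=(d!)^{1/d}\left(\vol{d}\ball{d}\right)^{2/d}$ is bounded in $d$ (it tends to $2\pi$) we may take $\Theta_0:=(\Theta/c)\sup_{d\in\N}c_d^{1/d}<\infty$.

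To set up (a) I would first normalize. The class $\funclass{\psi_0}$ and the integral functional are translation invariant, hence the L\"owner $0$-function is translation equivariant; replacing $f$ by a suitable translate, I may assume $\soelldense[0]{f}=\sell[0]{(A_L\oplus\alpha_L,0)}$ is \emph{centered}. Then Lemma~\ref{lem:duality_h_l} gives $\bigl(\soelldense[0]{f}\bigr)^{\circ}=\shf[0]{(A_L\oplus\alpha_L,0)}$, and from $f\leq\soelldense[0]{f}$ together with~\eqref{eq:polarity_reverse} and $f^{\circ\circ}=f$ we get $\shf[0]{(A_L\oplus\alpha_L,0)}\leq f^{\circ}$. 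The content of (a) is that this $\shf[0]$-function is the \emph{maximal} one below $f^{\circ}$, i.e.\ that it equals $\selldense[0]{f^{\circ}}$; equivalently, $\bigl(\selldense[0]{f^{\circ}}\bigr)^{\circ}$ is then a genuine $0$-ellipsoidal function above $f$ of integral exactly $c_d/\int_{\Red}\selldense[0]{f^{\circ}}$, which forces $\int_{\Red}\soelldense[0]{f}\leq c_d/\int_{\Red}\selldense[0]{f^{\circ}}$.

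The hard part will be precisely this identity (a): it is the functional analogue of the classical duality $L_{K}^{\circ}=J_{K^{\circ}}$ between L\"owner and John ellipsoids, valid when $L_{K}$ is centered at the origin. I would establish it via a John-type characterization of the L\"owner $0$-function, in the spirit of Fritz John's theorem and of~\cite[Theorem~5.1]{ivanov2020functional}: optimality of $\soelldense[0]{f}=\alpha_L e^{-\enorm{A_Lx}}$ in problem~\eqref{eq:s_problem} with $s=0$ is certified by finitely many ``contact'' data --- points where $f$ touches $\soelldense[0]{f}$, together with nonnegative weights realizing a decomposition of the identity and a barycentric condition --- and the same data, transported through Lemma~\ref{lem:duality_h_l} and~\eqref{eq:polarity_reverse}, certifies optimality of $\shf[0]{(A_L\oplus\alpha_L,0)}$ among $\shf[0]$-functions below $f^{\circ}$. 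The barycentric part of the decomposition is what singles out the centered ellipsoid rather than a translate, and simultaneously guarantees that the origin lies deep enough inside the superlevel sets of $f$ so that the normalization is self-consistent. A one-sided version is automatic --- the inclusion $\shf[0]{(A_L\oplus\alpha_L,0)}\leq f^{\circ}$ together with~\eqref{eq:mahler_volume_alike} already gives $\int_{\Red}\selldense[0]{f^{\circ}}\geq c_d/\int_{\Red}\soelldense[0]{f}$, i.e.\ a \emph{lower} bound on $\int_{\Red}\soelldense[0]{f}$ --- but the bound we want runs the other way, so the optimality argument cannot be avoided. If a full characterization turns out to be cumbersome, an alternative is to run the whole argument for a translate $h=f(\cdot+a)$ chosen so that the John $0$-function of $h^{\circ}$ is centered (existence of such an $a$ being a compactness/degree statement, using the boundedness results of Section~\ref{sec:classes}), after which $\bigl(\selldense[0]{h^{\circ}}\bigr)^{\circ}$ is a $0$-ellipsoidal function above $h$ of the required integral and $\int_{\Red}\soelldense[0]{f}=\int_{\Red}\soelldense[0]{h}$ concludes.
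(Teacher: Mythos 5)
Your strategy --- transporting the John--$0$ integral ratio bound for $f^{\circ}$ to the L\"owner side by polarity, then closing with functional Bourgain--Milman --- is genuinely different from the paper's proof. The paper argues directly: after normalizing so that $f(0)=\norm{f}=e^{-d}$ and the best \emph{even} $0$-ellipsoidal majorant of height~$1$ is $e^{-\enorm{x}}$, it introduces the auxiliary convex set $K_f=\mathrm{cl}\bigl(\bigcup_{r\geq d}\tfrac1r[f\geq e^{-r}]\bigr)$, shows $\ball{d}$ is its smallest-volume centered ellipsoid, extracts contact points from Claim~\ref{claim:lowner_for_centered_ellipsoid}, and feeds one-dimensional restrictions $g_i$ of $f$ into the reverse Brascamp--Lieb inequality to obtain $\int f\geq e^{-d}$ directly. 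No polarity, no Bourgain--Milman, no appeal to the John-side result of~\cite{alonso2018john}.

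Your ingredient (a) in its primary form is not available, and the paper itself says so in Section~\ref{sec:duality}: ``there is no duality between the John and L\"owner $0$-functions,'' even when one of the representing $d$-ellipsoids is centered at the origin. Concretely, centering $\soelldense[0]{f}$ at the origin gives you, via Lemma~\ref{lem:duality_h_l} and~\eqref{eq:polarity_reverse}, the \emph{one-sided} inclusion $\shf[0]{(A_L\oplus\alpha_L,0)}\leq f^{\circ}$, hence $\int\selldense[0]{f^{\circ}}\geq c_d/\int\soelldense[0]{f}$ --- that is, a \emph{lower} bound on $\int\soelldense[0]{f}$, the wrong direction. You already flag this, so the burden shifts entirely to your ``alternative'': finding a translate $h=f(\cdot+a)$, equivalently a tilt $h^{\circ}=e^{\iprod{a}{\cdot}}f^{\circ}$, whose John $0$-function is represented by a centered $d$-ellipsoid. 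If such an $a$ exists, then $\bigl(\selldense[0]{h^{\circ}}\bigr)^{\circ}$ is a genuine $\funclass{\psi_0}$-competitor above $h$ with integral $c_d/\int\selldense[0]{h^{\circ}}$, so $\int\soelldense[0]{f}=\int\soelldense[0]{h}\leq c_d/\int\selldense[0]{h^{\circ}}$, and the rest of your chain (Alonso-Guti\'errez et al.\ applied to $h^{\circ}$, then functional Bourgain--Milman, which indeed holds for every translate since the Santal\'o translate minimizes $\int(f(\cdot-a))^{\circ}$) closes cleanly with $\Theta_0\approx 2\pi\,\Theta/c$. But the existence of that $a$ is asserted, not proved: the map $a\mapsto(\text{center of }\selldense[0]{e^{\iprod{a}{\cdot}}f^{\circ}})$ is a nonlinear self-map of $\R^d$, and the boundedness results of Section~\ref{sec:classes} control the image only after the input is constrained, so a degree or fixed-point argument needs actual work (properness, behaviour as $\enorm{a}\to\infty$, etc.). Until that lemma is supplied, there is a real gap where the paper's direct construction of $K_f$ has none. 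One more small point: you should cite the functional Bourgain--Milman inequality explicitly, since the paper does not state it.
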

\begin{proof}[Proof of Theorem~\ref{sec:outer_integral_ratio}]
	Since the outer integral ratio is the same for all functions
	of the form  $\alpha f(x-a)$ with $\alpha > 0$ and $a \in \R^d,$
	we assume that  $f(0)= \norm{f}= e^{-d}.$
	By Lemma~\ref{lem:psi_lin_growth_bound}, 
	there is a $d$-ellipsoid $(A \oplus 1, 0)$ 
	such that $f \leq \sell[0]{(A \oplus 1,0)}.$
	By Lemma~\ref{lem:psi_boundedness} and Corollary~\ref{cor:existence},
	there exists a $0$-ellipsoidal function of minimal integral
	among those even $0$-ellipsoidal functions with height $1$
	that are pointwise greater than or equal to $f.$ 
	Applying a suitable linear transform,
	we assume that the minimal integral is attained at 
	$\sell[0]{\ball{d+1}} = e^{-\enorm{x}}.$ 
	That is, we assume that $ f \leq \sell[0]{\ball{d+1}}$ and $\sell[0]{\ball{d+1}}$
	is the solution to problem 
	\[
		\min\limits_{(A \oplus 1, 0) \in \ellipsd } 
		\int_{\R^d} \sell[0]{(A \oplus 1, 0)}  \quad \text{subject to} \quad f \leq \sell[0]{(A \oplus 1, 0)}.
	\]

	Let us show the geometric necessary condition for $\sell[0]{(A \oplus 1, 0)}$ to be the solution to this problem.
	Define the convex set 
	\[
		K_f=\operatorname{cl}
			\left(
				\bigcup_{r \geq d} \frac{1}{r} [f \geq e^{-r}]
			\right).
	\]

	\begin{claim}
		The unit ball $\ball{d}$
		is the smallest volume origin centered ellipsoid containing $K_f.$
	\end{claim}
	\begin{proof}
		By identity  $f(0) = \norm{f} = e^{-d}$ and  inequality $ f \leq \sell[0]{\ball{d+1}},$
		we see that each of the convex sets $\frac{1}{r} [f \geq e^{-r}]$ with $r \geq d$
		is nonempty and is a subset of the unit ball $\ball{d}.$
		Thus, $K_f \subset \ball{d}.$ 
		By construction, the origin belongs to $K_f.$

		It is easy to see that the smallest volume origin centered ellipsoid containing $K_f$ is unique.
		Assume, by a contradiction, that the ellipsoid 
		$A^{-1}\! \left(\ball{d}\right)$ with 
		$\det A > 1$ contains  $K_f.$ Consider the function
		$\sell[0]{(A \oplus 1,0)}.$ Fix $r \geq d.$ By definition, we have 
		\[ 
			\left[\sell[0]{(A \oplus 1,0)} \geq e^{-r}\right] = 
			\left\{x \in \R^{d} \st e^{-\enorm{Ax}} \geq e^{-r} \right\} = 
			\left\{x \in \R^{d} \st \enorm{Ax} \leq r \right\} =
			r \cdot A^{-1}\! \left(\ball{d}\right).
		\]
		This and 
		$\frac{1}{r} [f \geq e^{-r}] \subset A^{-1}\! \left(\ball{d}\right)$
		yield
		\[
			[f \geq e^{-r}] \subset \left[\sell[0]{(A \oplus 1,0)} \geq e^{-r}\right]. 
		\]
		Thus, $f \leq \sell[0]{(A \oplus 1,0)}$ and
		$\int_{\R^d} \sell[0]{(A \oplus 1,0)} < \int_{\R^d} \sell[0]{\ball{d+1}}.$
		We obtain a contradiction.
	\end{proof}

	Adjusting the celebrated John's theorem
	(for example, following the proof given in~\cite{ball1997elementary})
	to our case, we get the following statement,
	the  proof of which is given in Appendix~\ref{sec:appendix}.

	\begin{claim}\label{claim:lowner_for_centered_ellipsoid}
		Let $K$ be a convex body in $\R^d.$
		If $\ball{d}$ is the smallest volume origin symmetric ellipsoid containing $K,$
		then there exist unit vectors $(u_i)_1^m$ on the boundary of $K$ and positive weights $(c_i)_1^m$
		satisfying John's condition
			\begin{equation}\label{eq:john_condition}
				\sum_{i=1}^{m} c_i u_i \otimes u_i = \id  
				\quad \text{and} \quad 
				\sum\limits_{i=1}^m c_i=d.
			\end{equation}
	\end{claim}

	We use another key tool related to John's condition, namely, the Brascamp--Lieb inequality.
	We recall the reverse  Brascamp--Lieb inequality of F. Barthe~\cite[Theorem 5]{barthe1998reverse}. 
	It states that if unit vectors $(u_i)_1^m$ and positive weights $(c_i)_1^m$ satisfy John's condition~\eqref{eq:john_condition},
	then for measurable non-negative functions $q_i$ on $\R ,$ $i\in[m]$, one has
	\begin{equation}\label{eq:reverse_B-L_inequality}
		\int_{\R^d}\sup\left\{\prod_{i=1}^m q_i(\Theta_i)^{c_i} \st x
			= \sum_{i=1}^m c_i \Theta_i u_i\right\} \di x \geq \prod_{i=1}^m \left(\int_{\R} q_i\right)^{c_i}.
	\end{equation}
		
	Let unit vectors $(u_i)_1^m$ on the boundary of $K_f$ and positive weights $(c_i)_1^m$
	satisfy John's condition~\eqref{eq:john_condition}.	
	Fix $i \in [m].$
	Let $f_i$ be the restriction of $f$ on the line with directional vector $u_i,$ 
	that is, $f_i(t) = f(t u_i),$ $t \in \R.$	 	
	What does it mean that $u_i$ belongs to the boundary of $K_f$?
	The answer is simple: either there exists $t \geq d$ such that 
	$f_i (t) = e^{-t}$
	or, by convexity, $u_i$ is an accumulation point
	of the segments $\left\{\frac{1}{r} [f_i \geq e^{-r}]\right\}_{r \geq d}$ 
	and does not belong to any of these segments.
	In the former case, we set 
	$t_i = \min\{t \geq 0 \st f(tu_i) = e^{-t} \}.$
	Otherwise, put $t_i = + \infty.$
	Clearly,  $t_i \geq d$.
	Define for a finite $t_i$
	\[
		g_i(t)=
			\begin{cases}
				e^{-d} e^{-t \left(1 - \frac{d}{t_i}\right)}, \quad & \text{if} \quad t\in[0,t_i),\\
				0, \quad & \text{otherwise}
			\end{cases}
	\]
	and for $t_i = + \infty$
	\[
		g_i(t)=
			\begin{cases}
				e^{-d} e^{-t}, \quad & \text{if} \quad t\in[0, \infty),\\
				0, \quad & \text{otherwise}
			\end{cases}.
	\]
	\begin{claim}\label{claim:ovr_f_g_comparison}
		For any $i \in [m],$ we have that $g_i \leq f_i.$
	\end{claim}
	\begin{proof}
		If $t_i$ is finite, the result follows from the the log-concavity of $f_i.$
		Consider the case $t_i = + \infty.$
		Then for any $\epsilon \in (0,1),$ there exists 
		$r >  1 / \epsilon$ such that  $f_i((1- \epsilon)r) \geq e^{-r}.$
		The log-concavity of $f_i$ yields 
		\[
			f_i(t) \geq 
			e^{-d} 
			e^{-t \left[\frac{1}{1 - \epsilon}  - \frac{d}{(1- \epsilon)r}\right]}
		\]
		for all $t \in [0,1 / \epsilon].$
		Taking the limit as $\epsilon$ tends to $0,$ we see that
		$f_i (t) \geq e^{-d} e^{-t}$ for all $t \in [0, \infty).$
		This completes the proof of Claim~\ref{claim:ovr_f_g_comparison}.
	\end{proof}

	By the log-concavity of $f$ and the rightmost identity in~\eqref{eq:john_condition},
	we get
		\begin{equation}\label{eq:pre_BL}
			f(x) \geq \sup 
			\left\{ 
			\prod_{i=1}^m f_i(d \cdot \Theta_i )^{c_i / d} \st x=\sum_{i=1}^m c_i \Theta_i u_i \right\}.
		\end{equation}
	Using~\eqref{eq:pre_BL} in the reverse Brascamp--Lieb inequality~\eqref{eq:reverse_B-L_inequality}
	and by Claim~\ref{claim:ovr_f_g_comparison},
	we obtain 
	\[
		\int_{\R^d} f \geq \prod_{i=1}^m \left(\int_{\R} f_i(t d)^{1 / d} \di t \right)^{c_i} \geq
		\prod_{i=1}^m \left(\int_{\R} g_i(t d)^{1 / d} \di t \right)^{c_i}
			= \prod_{i=1}^m \left(\int_{0}^{t_i / d} g_i(t d)^{1 / d}  \di t\right)^{c_i}.
	\]
	For any $\tau \geq d,$ define $p(\tau) = \int\limits_{0}^{\tau/d} e^{-t \left(1 - d / \tau\right)} \di t.$
	Additionally, put $p(+\infty) = e^{-d} \int\limits_{0}^{+\infty} e^{-t} \di t.$
	Therefore, by the previous inequality and the rightmost identity in~\eqref{eq:john_condition},
	we get
	\[
		\int_{\R^d} f \geq 
		\prod_{i=1}^m \left(\int_{0}^{t_i / d} g_i(t d)^{1 / d}  \di t\right)^{c_i}   =
		\prod_{i=1}^m \left( \frac{1}{e} \int_{0}^{t_i / d} 
		e^{-t \left(1 - d/t_i\right)} \di t \right)^{c_i}	\geq	
		e^{-d} \cdot  
		\left(\inf\limits_{\tau \in [d, + \infty]} p(\tau) \right)^{d}.
	\]
	We claim that $\inf\limits_{\tau \geq d} p(\tau) = 1.$
	Indeed, $p(d) = 1$ and $\lim\limits_{\tau \to \infty} p(\tau) = p(+\infty) = 1;$
	on the other hand, if $\tau\in[d,+\infty)$, we have
	\[
		p(\tau) =
			\frac{1 - e^{1 - \tau / d}}{1 - d / \tau} > 1
			\quad  \Longleftrightarrow \quad  e^{\tau/d} > \frac{e\tau}{d}.
	\] 
	The last inequality is simple and holds for any $\tau\in[d,+\infty)$.
	Thus, $	\int_{\R^d} f  \geq e^{-d},$ and  we conclude that
	\[
		\sointrat[0](f)
			=\left(\frac{\int_{\R^d} \soelldense[0]{f}}{\int_{\R^d} f}\right)^{1 / d}
			\leq \left(\frac{\int_{\R^d} e^{-\enorm{x}} \di x}{\int_{\R^d} f}\right)^{1 / d}
			\leq \left(\frac{d! \vol{d}\ball{d}}{e^{-d}}\right)^{1 / d}
			= e {\sqrt{\pi}} \left(\frac{d !}{\Gamma\!\left(1 + d/2\right)}\right)^{1 / d},
	\]
	where $\Gamma(\cdot)$ is Euler's Gamma function. The existence of $\Theta_0$ follows.
\end{proof}

\section{Duality}\label{sec:duality}

Recall that the polar set $K^{\circ}$ of a body $K \subset \R^d$
is defined by
\[
	K^{\circ} = \{y\in\R^d \st \forall x \in K\ \iprod{y}{x} \leq 1\}.
\]
It is known that for a convex body $K \subset \R^d$ with the John ellipsoid $J_K$ centered at the origin, we have  
\[
	(J_K)^\circ = L_{K^{\circ}},
\]
where $L_{K^{\circ}}$ is the L\"owner ellipsoid of  ${K^{\circ}}.$ 

One would expect the same properties for  John and L\"owner functions (see Subsections~\ref{subsec:john_s-func} and \ref{subsec:main_results}).
However, as was shown in~\cite{li2019loewner},
there is no duality between the John and L\"owner $0$-functions.
That is, assuming the John $0$-function of $f$
is represented by an ellipsoid centered at the origin, we might have 
$\loglego{\left(\selldense[0]{f}\right)} \neq \soelldense[0]{\loglego{f}}.$
By continuity and Theorem~\ref{thm:zero_limit},
it follows that there exist functions $f$ such that
$\loglego{\left(\selldense{f}\right)} \neq \soelldense[s]{\loglego{f}}$
for a sufficiently small positive $s.$
We conjecture that for any positive $s$ there is such an example.

The problem is that the centers of $d$-ellipsoids
representing $\selldense{f}$ and $\soelldense{\loglego{f}}$
might be different.
It is not the case for the setting of convex sets
(key observation:
the polar of an ellipsoid containing the origin in the interior is an ellipsoid).
However, for any $s \in [0, \infty],$ 
if $d$-ellipsoids representing $\selldense{f}$ and $\soelldense{\loglego{f}}$
are origin symmetric,
we have the duality, that is, the following identity 
	\begin{equation}\label{eq:dual_s-func_centered}
		\loglego{\left(\selldense{f}\right)}=\soelldense{f^\circ}
		\quad\text{and}\quad 
		\loglego{\left(\soelldense{f}\right)}=\selldense{f^\circ}
	\end{equation}
holds. Indeed, it follows from Lemma~\ref{lem:duality_h_l},
identity~\eqref{eq:mahler_volume_alike} and~\eqref{eq:polarity_reverse}.

Here are some examples of such a  duality.
By symmetry, we have the following.
\begin{exl}\label{exl:john-lowner-function}
	Let $s \in [0, \infty]$ and $f\colon \Red \to [0, \infty)$
	be a proper even log-concave function on $\Red.$
	Then identity~\eqref{eq:dual_s-func_centered} holds.
\end{exl}
In Theorem 5.1 of~\cite{ivanov2020functional},
the authors give a necessary and sufficient condition
for a proper log-concave functions $f$
to satisfy $\shf{\ball{d+1}} = \selldense{f}$, $s \in (0, \infty)$. 
This and Example~\ref{exl:john-lowner-function}
yield the same type result for proper even log-concave functions.
 
In the following example, the centers were computed directly.
By Proposition~\ref{prp:gaussian_volume_l} and Proposition 8.3 of~\cite{ivanov2020functional},
we obtain
\begin{exl}
	Let $K \subset \Red$ be a convex body containing the origin it its interior and let 
	$\norm{\cdot}_{K}$ denote the gauge function of $K$, that is, 
	$\norm{x}_{K}=\inf\{\lambda>0\st x\in\lambda K\}.$ Set  $f = e^{-\norm{x}_K^2}.$ Then 
	\begin{equation*}
		\loglego{\left(\selldense{f}\right)}=\soelldense{f^\circ}
		\quad\text{and}\quad 
		\loglego{\left(\soelldense{f}\right)}=\selldense{f^\circ}.
	\end{equation*}
\end{exl}

Finally, we note that the duality helps us to understand the properties of the constructed L\"owner functions.
The key tool for the interpolation between ellipsoidal functions $\shf{(\cdot)}$
is the \emph{Asplund sum} (or, \emph{sup-convolution}),
which is defined for  two functions $f_1$ and $f_2$ on $\Red$
by
\[
	(\loginfconv{f_1}{f_2})(x) = \sup\limits_{x_1 + x_2 = x} f_1(x_1) f_2(x_2).
\]
In our case everything is easier;
we use the product of two $\sell{(\cdot)}$ functions. 
It is easy to check that
\[
	\loglego{(\loginfconv{f_1}{f_2})} =
	\loglego{f_1} \cdot \loglego{f_2}.
\]
That is, despite the fact
that there is no duality between the John and L\"owner $s$-functions in general,
there is duality between the methods.

\appendix

\section{}\label{sec:appendix}

\begin{proof}[Proof of Claim~\ref{claim:lowner_for_centered_ellipsoid}]
We equip the space of symmetric matrices of order $d$
with an inner product defined by
\[ 
	\iprod{A}{B} = \tr{ A \, B}.
\]
Denote the set of contact points by $C=\bd{\ball{d}}\cap\bd{{K}}$
and consider
\[
	 \widehat{C}=\left\{{u}\otimes {u} \st 
	{u}\in C\right\}.
\]

The proof of Claim~\ref{claim:lowner_for_centered_ellipsoid} 
is an adaptation of the argument given in~\cite{ball1997elementary} and~\cite{GruberBook}. 

First, as a standard observation, we state the relationship between 
\eqref{eq:john_condition} and separation by a hyperplane of the point 
$\id$ from the set $\pos{\widehat{C}}$ in the space of symmetric matrices.

We claim that the following assertions are equivalent:
\begin{enumerate}
	\item\label{item:eqjohncond} 
		There are contact points and weights satisfying~\eqref{eq:john_condition}.
	\item\label{item:eqjohncond2} 
		There are contact points and weights satisfying the leftmost identity in~\eqref{eq:john_condition}.
	\item\label{item:sinpos}
		$\id \in \pos \widehat{C}$.
	\item\label{item:separation} 
		There is no symmetric matrix $H \in \R^{d \times d}$ with
\begin{equation}\label{eq:Hseparates}
	\iprod{{H}}{ \id}>0, \text{ and }
	\iprod{{H}}{{u} \otimes {u}} < 0 
	\text{ for all } {u}\in C.
\end{equation}
\end{enumerate}
Indeed, taking the trace of both sides in the leftmost identity  in~\eqref{eq:john_condition}, we get that
$\sum\limits_{i=1}^m c_i=d.$ Thus, the 
equivalence of~\eqref{item:eqjohncond} and~\eqref{item:eqjohncond2} follows.
The equivalence of~\eqref{item:eqjohncond2},~\eqref{item:sinpos} and~\eqref{item:separation}
is an immediate consequence of the hyperplane separation lemma.

Assume that there are no contact points and weights
satisfying the leftmost identity in~\eqref{eq:john_condition}.
Then there is a symmetric matrix $H \in \R^{d \times d}$
satisfying~\eqref{eq:Hseparates}. 

Consider the matrix $\id + \delta H.$
By~\eqref{eq:Hseparates}, 
for some $\delta_1 > 0$ and for any $\delta \in (0, \delta_1),$ we obtain 
\begin{equation*}
\det \left( \id + \delta H\right) = 
1 + \delta \iprod{\id}{H} + o(\delta) > 1.
\end{equation*}
Hence, the ellipsoid 
$\left( \id + \delta H\right)^{-1} \left(\ball{d} \right)$
has the volume strictly less than the volume of $\ball{d}$ for any  $\delta \in (0, \delta_1).$ 

Thus, it  suffices to show that  that there exists $\delta_0 > 0$ such that
for any $\delta \in [0, \delta_0),$ the ellipsoid 
$\left( \id + \delta H\right)^{-1} \left(\ball{d} \right)$ contains $K.$ Let us prove this assertion.

Fix any $u \in C$ and denote $x = x(\delta, u) = \left( \id + \delta H\right) u.$ Then
\[
\enorm{x}^2 = \iprod{ \left( \id + \delta H\right) u}{ \left( \id + \delta H\right) u} = 1 + 2 \delta 
\iprod{u}{H u} + \delta^2 \enorm{Hu}^2.
\]
By~\eqref{eq:Hseparates}, we conclude that there exists
$\delta_u >  0$ such that $\enorm{x} < 1$ for all
$\delta \in (0, \delta_u).$ Hence, $u = \left( \id + \delta H\right)^{-1} x$ belongs to the interior of 
$\left( \id + \delta H\right)^{-1} \left(\ball{d} \right)$
for all $\delta \in (0, \delta_u).$
By compactness, there exists $\delta_2  > 0$ such that 
$C$ belongs to the interior  of $\left( \id + \delta H\right)^{-1} \left(\ball{d} \right)$
for all  $\delta \in (0, \delta_2).$
Again, by compactness, it follows that there exists $\delta_0 > 0$ such that 
for any $\delta \in [0, \delta_0),$ the ellipsoid 
$\left( \id + \delta H\right)^{-1} \left(\ball{d} \right)$ contains $K.$
That is, the original symmetric ellipsoid 
$\left( \id + \frac{\delta_0}{2} H\right)^{-1} \left(\ball{d} \right)$
contains $K$ and has the volume that is strictly less than 
the volume of $\ball{d}.$
We obtain a contradictionwith the choice of the smallest volume origin symmetric ellipsoid containing $K,$
and complete the proof of Claim~\ref{claim:lowner_for_centered_ellipsoid}.
\end{proof}
\begin{rem}
	It is not hard to prove that under the assertion of Claim~\ref{claim:lowner_for_centered_ellipsoid},
	the unit ball $\ball{d}$ is the smallest volume origin centered ellipsoid containing $K$
	if and only if John's condition~\eqref{eq:john_condition} holds.
\end{rem}

\subsection*{Acknowledgement}
The authors acknowledge the financial support from the Ministry of Educational and Science
of the Russian Federation in the framework of MegaGrant no 075-15-2019-1926

\bibliographystyle{alpha}
\bibliography{../../../../work_current/uvolit.bib}
\end{document}